\theoremstyle{plain}
\newtheorem{thm}{Theorem}
\newtheorem{prop}{Proposition}
\newtheorem{lemma}{Lemma}
\newtheorem{cor}{Corollary}
\theoremstyle{remark}
\newtheorem{rem}{Remark}
\newtheorem{defin}{Definition}
\newtheorem{assump}{Assumption}
\def\convp{\xrightarrow{\mathbb{P}_{b_0,s_0}}}
\def\exz{{ {\mathbb{E}_{0,s_0}\,}}}
\def\exzb{{ {\mathbb{E}_{b_0,s_0}\,}}}
\def\ppz{{ {\mathbb{P}_{0,s_0}\,}}}
\def\varz{{ {\mathbb{V}\mathrm{ar}_{0,s_0}\,}}}
\def\expi{{ {\mathbb{E}_{\Pi_n}\,}}}
\def\vpi{{ {\mathbb{V}\mathrm{ar}_{\Pi_n}\,}}}
\newcommand{\ind}{\mathbf{1}}
\def\pp{{ {\mathbb{P}_{b_0,s_0}\,}}}
\def\ppz{{ {\mathbb{P}_{0,s_0}\,}}}
\def\dd{{ {\mathrm{d}}}}
\def\ee{{ {\mathbb{E}}}}
\def\eps{\varepsilon}
\newcommand{\rr}{\mathbb{R}}
\renewcommand{\th}{\theta}
\newcommand{\si}{\sigma}
\newcommand{\la}{\lambda}
\newcommand\cyr{%
\renewcommand\rmdefault{wncyr}%
\renewcommand\sfdefault{wncyss}%
\renewcommand\encodingdefault{OT2}%
\normalfont
\selectfont}
\DeclareTextFontCommand{\textcyr}{\cyr}
\definecolor{violet}{rgb}{0.3,0.0, 0.55}
\def\epsilon{\varepsilon}
\begin{document}

\title[Bayesian diffusion coefficient estimation]{Nonparametric Bayesian estimation of a H\"older continuous diffusion coefficient}

\author{Shota Gugushvili}
\address{Biometris\\
	Wageningen University \& Research\\
	Postbus 16\\
	6700 AA Wageningen\\
	The Netherlands}
\email{shota@yesdatasolutions.com}

\author{Frank van der Meulen}
\address{Delft Institute of Applied Mathematics\\
Faculty of Electrical Engineering, Mathematics and Computer Science\\
Delft University of Technology\\
Mekelweg 4\\
2628 CD Delft\\
The Netherlands}
\email{f.h.vandermeulen@tudelft.nl}

\author{Moritz Schauer}
\address{Mathematical Institute\\
Leiden University\\
P.O. Box 9512\\
2300 RA Leiden\\
The Netherlands}
\email{m.r.schauer@math.leidenuniv.nl}

\author{Peter Spreij}
\address{Korteweg-de Vries Institute for Mathematics\\
University of Amsterdam\\
P.O. Box 94248\\
1090 GE Amsterdam\\
The Netherlands \and Institute for Mathematics, Astrophysics and Particle Physics\\ Radboud University\\ Nijmegen\\ The Netherlands}
\email{spreij@uva.nl}

\subjclass[2000]{Primary: 62G20, Secondary: 62M05}

\keywords{Diffusion coefficient; Gaussian likelihood; Non-parametric Bayesian estimation; Pseudo-likelihood; Posterior contraction rate; Stochastic differential equation; Volatility}

\begin{abstract}
We consider a nonparametric Bayesian approach to estimate the diffusion coefficient of a stochastic differential equation given discrete time observations  over a fixed time interval.
As a prior on the diffusion coefficient, we employ a histogram-type prior with piecewise constant realisations on bins forming a partition of the time interval. Specifically, these constants are realizations of independent inverse Gamma distributed randoma variables.
We justify our approach by deriving the rate at which  the corresponding posterior distribution asymptotically concentrates around the data-generating diffusion coefficient. This posterior contraction rate turns out to be optimal for estimation of a H\"older-continuous diffusion coefficient with smoothness parameter $0<\lambda\leq 1.$ Our approach is straightforward to implement, as the posterior distributions turn out to be inverse Gamma again, and leads to good practical results in a wide range of simulation examples. Finally, we apply our method on exchange rate data sets.

\end{abstract}

\date{\today}

\maketitle
\tableofcontents

\section{Introduction}

\subsection{Problem description} Stochastic differential equations (SDEs) have been widely used as models in numerous applications ranging from physics (see for example \cite{allen}) to engineering (see \cite{wong}) and to finance (see \cite{rutkowski}). We assume  observations from an SDE of the form
\begin{equation}
\label{sde}
\dd X_t=b_0(t,X_t)\,\dd t+s_0(t)\,\dd W_t, \quad X_0=x, \quad t\in[0,T],
\end{equation}
with a drift coefficient $b_0,$ (deterministic) dispersion coefficient $s_0,$ and a deterministic initial condition $x.$ Here $X$ is real valued and $W$ is a Brownian motion. 
We assume observations \[\scr{X}_n =\{X_{t_{0,n}}, \ldots, X_{t_{n,n}}\}\] from the solution $X$ to \eqref{sde} are available, where $t_{i,n}=\tfrac{i}n T,$ $i=0,\ldots,n.$ Our aim is to estimate $s_0$ nonparametrically within the Bayesian setup. 

Model \eqref{sde} covers the case of linear SDEs, such as the popular Ornstein-Uhlenbeck process; see, e.g., Section 5.6 in \cite{karatzas}. Among references that study \eqref{sde} as models for log returns of asset prices, we mention \cite{lutz10} and \cite{mishura15}, but the model has applications far beyond this context as well. A seemingly more general SDE 
\begin{equation}
\label{sde2}
\dd X_t=\widetilde{b}_0(t,X_t)\,\dd t+s_0(t)f_0(X_t)\,\dd W_t, \quad X_0=x, \quad t\in[0,T],
\end{equation}
can be reduced to the form \eqref{sde} through a simple transformation of $X_t$, namely
\begin{equation*}
Y_t=x+\int_0^{X_t}\frac{1}{f_0(u)}\dd u,
\end{equation*}
provided $f_0$ is known and sufficiently regular; see p.~186 in \cite{soulier98}. Financial practitioners often are content with a model of the type \eqref{sde2} as a simple and useful generalisation of the Black-Scholes model; see, e.g., pp.~7--14 in \cite{gatheral} for Dupire, Derman and Kani's pioneering work on local volatility. In particular, a discretely observed geometric Brownian motion with time-varying coefficients  is also a special case of \eqref{sde2}, once one passes to the corresponding log returns (see \cite{taleb} for additional information and applications). As the drift in \eqref{sde} is allowed to be non-linear, the distribution of $X_t$ is in general not Gaussian and may well exhibit heavy tails, which is attractive from the point of view of financial applications. Finally, in some practical applications it is genuinely important to employ a time-dependent diffusion coefficient; a real data example is given in  Section \ref{section:realdata}.

\subsection{Related literature} Statistical inference for SDEs is a well studied and very active field of research, that is far from saturation. Relevant literature can be divided into two categories: works dealing with parametric and works dealing with nonparametric methods. Parametric approaches specify parametric forms for the drift and diffusion coefficients of SDEs. When these specifications use correct functional forms, such methods attain a higher statistical efficiency over the nonparametric ones. On the other hand, nonparametric approaches, where one only assumes qualitative features of the drift and diffusion coefficients, guard one against model misspecification, which may have dramatically negative consequences for valid inference; also, nonparametric techniques may suggest plausible parametric models in those situations where these models cannot be derived from the first principles; see, e.g., \cite{silverman86}. For parametric approaches to inference in SDE models, see, e.g., Chapter 2 in \cite{kutoyants}, Chapter 3 in \cite{iacus}, and references therein. Nonparametric statistical inference for SDEs of the type studied in the present work has been considered in \cite{genon92}, \cite{hoffmann97} and \cite{soulier98} within the frequentist setup, while \cite{gugu} and \cite{gugu16} have explored the problem from the Bayesian perspective. Although the nonparametric methods these papers study are implementable in principle, these works are primarily of theoretical nature and practical performance of the corresponding approaches is not clear. Furthermore, except \cite{gugu} and \cite{gugu16}, there is hardly any other work available on estimation of the dispersion coefficient (or diffusion coefficient) from the nonparametric Bayesian point of view, which constitutes the central topic of our paper. In this context we can mention only a theoretical contribution \cite{nickl} and a practically oriented paper \cite{batz}, but the models considered there, as well as the sampling scheme, are different from ours, and the theory developed in \cite{nickl} does not cover the approach in \cite{batz}. On a general level, apart from a philosophical appeal for Bayesians, advantages of a Bayesian approach include automatic quantification of uncertainty in parameter estimates through Bayesian credible sets, and the fact that it is a fundamentally likelihood-based method (see \cite{berger}). Furthermore, recent practical advances made in nonparametric Bayesian estimation of the drift coefficient, see, e.g., \cite{frank14},  \cite{frank17a} and \cite{papa}, 
would suggest that comparable results can be obtained for estimation of the diffusion coefficient too. We note, however, that from an implementational point of view nonparametric Bayesian estimation of a dispersion coefficient is very different from drift coefficient estimation: the latter fundamentally relies on the equivalence of laws of continuously observed diffusion processes that have the same diffusion coefficient, which is not applicable when the diffusion coefficient itself is unknown and is a parameter to be estimated.

\subsection{Approach and results} The main practical challenges for Bayesian inference in SDE models from discrete observations are an \emph{intractable likelihood} and absence of a closed form expression for the posterior distribution, which complicates considerably the inference; see, e.g., \cite{roberts-stramer}, \cite{elerian01}, \cite{fuchs} and \cite{frank17a}. We circumvent these difficulties by intentionally misspecifying the drift coefficient, and employing a (conjugate) histogram-type prior on the diffusion coefficient, that has piecewise constant realisations on bins forming a partition of $[0,T]$ (this is different from \cite{gugu} and \cite{gugu16}, where the drift $b_0$ is in fact zero, and other priors are used). Due to this, our nonparametric Bayesian method to estimate the dispersion coefficient $s_0$ in \eqref{sde}  is  easily  implemented, fast and requires little fine-tuning from the user. We demonstrate its good practical performance on a wide range of simulated data examples and we apply it on real data from finance, yielding interesting conclusions.

On the theoretical side, we investigate the asymptotic performance of our Bayesian procedure from the frequentist point of view. Theoretical analysis of Bayesian procedures for inference in SDE models from discrete observations is in general challenging; see, e.g., the contributions \cite{frank13}, \cite{gugu14} and \cite{nickl} for an impression, albeit in settings different from ours. We consider the `infill' asymptotics with the time $T$ horizon staying fixed and the number of observations $n$ in the interval $[0,T]$ increasing; this asymptotic regime is standard in the literature and can be thought of as reasonably satisfied in many financial applications. Complicating factors for a theoretical analysis in our setting are due to influence of the unknown drift coefficient $b_0$, which we have intentionally misspecified. We address this through an argument based on Girsanov's theorem. The main theoretical result we obtain tells us that the drift misspecification is asymptotically harmless and our procedure for estimating the diffusion coefficient is consistent at rate $n^{-\beta}$ in the $L_2$-norm, with the precise value of $\beta$ depending on the smoothness of the true dispersion coefficient. The corresponding posterior contraction rate is optimal for estimation of H\"older smooth dispersion coefficients of order $0<\lambda\leq 1$.

\subsection{Organisation of this paper}  Section~\ref{section:model} contains the model specification and a detailed description of our nonparametric Bayesian approach. Section~\ref{section:asymp} contains the theoretical results. Our Bayesian method depends on a hyperparameter, the number of bins forming the partition of the interval $[0,T]$, and Sections~\ref{sec:dic} and \ref{sec:bayes:factor} discuss possible practical methods of its choice. In Section~\ref{section:simulation} we investigate practical performance of our method via simulations and provide illustrations of our theory from Section~\ref{section:asymp}. Examples with real data are studied in Section~\ref{section:realdata}. Proofs of the results from Section~\ref{section:asymp} can be found in Section~\ref{section:proofsgeneral}. In Appendix~\ref{appendix:asymp} we state and prove an additional theoretical result.

\subsection{Frequently used notation}
We denote by $\|\cdot\|_2$  the $L_2$-norm with respect to the Lebesgue measure on the Borel sets of $[0,1]$.
We use the following notation to compare two sequences $\{a_n\}$ and $\{b_n\}$ of positive real numbers: $a_n\lesssim b_n$  (or $b_n\gtrsim a_n$) means that there exists a constant $C>0$ that is independent of $n$ and is such that $a_n\leq C b_n.$ As a combination of the two we write $a_n\asymp b_n$ if both $a_n\lesssim b_n$ and $a_n\gtrsim b_n$. We will also write $a_n \gg b_n$ to indicate that $a_n/b_n\rightarrow\infty$ as $n\rightarrow\infty$. By $a \vee b$ we denote the maximum of two numbers $a$ and $b.$ 

We let  $\pp$ denote the law of the path $(X_t\colon t\in[0,T])$ from \eqref{sde} under the true parameter values $(b_0,s_0).$ In particular the notation $\mathbb{P}_{0,s_0}$ is used for such a law when the drift coefficient is equal to zero.
We denote the prior distribution on the dispersion coefficient by $\Pi_n$ (with $n$ the number of observations) and write the posterior as $\Pi_n(\,\cdot\mid \mathcal{X}_n)$.
 We denote the posterior expectation and variance by $\expi(\,\cdot\mid\mathcal{X}_n)$ and $\vpi(\,\cdot\mid \mathcal{X}_n),$ respectively.

\section{Assumptions and Bayesian setup}\label{section:model}

We summarise the assumptions on our statistical model.

\begin{assump}
\label{standing}
Assume that
\begin{enumerate}[(a)]
\item the model \eqref{sde} is given with $x=0$ and $T=1$;
\item the drift coefficient satisfies a linear growth condition and is Lipschitz in its second argument: for some $K>0$ it holds that
\begin{align*}
|b_0(t,x)|^2 & \leq K(1+|x|^2),\quad \forall x\in\rr,\\
|b_0(t,x)-b_0(t,y)| &\leq K|x-y|, \quad \forall t\in[0,1], \quad \forall x,y\in\rr;
\end{align*}
\item the dispersion coefficient $s_0$ is H\"older continuous on $[0,1]$ with H\"older constant $L$ and H\"older exponent $\lambda\in (0,1]$, $|s_0(u)-s_0(v)|\leq L|u-v|^\lambda$ for all $u,v\in [0,1]$,
and is bounded away from zero and (by continuity also from infinity);
\item a discrete time sample
\[ \scr{X}_n =\{X_{t_{0,n}}, \ldots, X_{t_{n,n}}\}\] from the solution $X$ to \eqref{sde} is available, where $t_{i,n}=i/n,$ $i=0,\ldots,n.$ For future reference, we also define $Y_{i,n} = X_{t_{i,n}}-X_{t_{i-1,n}}$.
\end{enumerate}
\end{assump}

Under Assumption~\ref{standing}, equation \eqref{sde} admits a unique strong solution, see, e.g., Theorems 2.5 and 2.9 of section 5.2 in \cite{karatzas}. The minimal regularity conditions on the dispersion coefficient $s_0$ in Assumption \ref{standing}~(c) are needed for our asymptotic statistical theory to work; see Section~\ref{section:asymp}. The sampling scheme in Assumption \ref{standing}~(d) is referred to as the high-frequency data setting and is a popular asymptotic setup for inference in SDE models, see, e.g., \cite{dette06}, \cite{florens93}, \cite{genon92}, \cite{hoffmann97}, \cite{hoffmann99}, \cite{jacod00} and \cite{soulier98}. Financial data are often of a similar type, see \cite{sabel}. As remarked in \cite{ignatieva12}, p.~1334, ``while the estimation of a drift coefficient function is theoretically of major interest in finance, for instance, for portfolio optimization, it can practically rarely be achieved. It is a fact that accurate estimation of drift coefficient functions requires considerably longer time series than typically available". This example, then, provides an instance of a setting when the assumption $T\rightarrow\infty$ cannot be thought to be reasonably satisfied. Other sampling schemes have been also considered in the literature on nonparametric inference for SDE models, e.g.\ when the time horizon $T\rightarrow\infty$, but the distance $\Delta$ between observation times stays fixed (the so-called low frequency data setting; see, e.g., \cite{gobet04} and \cite{nickl}). Alternatively, one can assume observations are available at times $\Delta,2\Delta,\ldots$, and consider the case $\Delta\rightarrow 0$, $T=n\Delta\rightarrow\infty$, see, e.g., \cite{hoffmann} (this is again referred to as the high-frequency data setting). The question which of the asymptotic regimes reasonably applies to a given dataset in practice can be decided only on the case by case basis.

Our first observation is that under the sampling scheme as in Assumption \ref{standing}~(d), consistent estimation of the drift coefficient $b_0$ is impossible; see, e.g., \cite{mai}, p.~919. Furthermore, in many contexts, e.g.\ when pricing financial derivatives, knowledge of the drift coefficient is in fact of no interest, whereas the dispersion coefficient is of paramount importance; see \cite{rutkowski}. This motivates us to completely ignore the drift coefficient in our estimation procedure by intentionally misspecifying the model and acting as if the drift were equal to zero. We will justify this in Section~\ref{section:asymp}. Then the pseudo-likelihood associated with our observations is Gaussian and is given by
\begin{equation}
\label{likelih}
L_n(s)=\prod_{i=1}^{n} \left\{ \frac{1}{\sqrt{2\pi\int_{t_{i-1,n}}^{t_{i,n}}s^2(u)\,\dd u}}\psi\left( \frac{Y_{i,n}}{\sqrt{\int_{t_{i-1,n}}^{t_{i,n}}s^2(u)\,\dd u}} \right) \right\},
\end{equation}
where $\psi(u)=\exp(-u^2/2).$ Gaussian pseudo-likelihood is a widely used object in statistics, see, e.g., Section 5.2 in \cite{brockwell}, \cite{dimitriou} and \cite{hualde} for some examples. 
Setting the drift to zero in our setting has a practical advantage of obtaining a simple and tractable expression for the pseudo-likelihood. 

With $\Pi_n$ denoting a prior on the dispersion coefficient, provided all the involved quantities are suitably measurable, Bayes' theorem gives that the posterior probability of any measurable subset $S\subset\mathcal{S}$ of dispersion coefficients is given by
\begin{equation*}
\Pi_n(S\mid X_{t_{0,n}}\ldots,X_{t_{n,n}})=\frac{\int_{S} L_n(s) \Pi_n(\dd s)}{ \int_{\mathcal{S}} L_n(s) \Pi_n(\dd s) },
\end{equation*}
where $\mathcal{S}$ denotes a space on which the prior $\Pi_n$ is defined. From the above display various point estimates of $s_0$ can be obtained, such as the posterior mean.

It follows from \eqref{likelih} that the likelihood depends on the parameter of interest only through the integrals $\int_{t_{i-1,n}}^{t_{i,n}}s^2(u)\,\dd u$ and is otherwise `blind' to precise values the diffusion coefficient takes inside the intervals $[{t_{i-1,n}},{t_{i,n}}]$. Consequently, it appears natural to a priori model the diffusion coefficient as piecewise constant on intervals $[{t_{i-1,n}},{t_{i,n}}]$. However, some smoothing should also be performed, and this can be achieved by aggregated several neighbouring intervals $[{t_{i-1,n}},{t_{i,n}}]$. Thus, to construct the prior, we proceed as follows: Let $m$ be an integer smaller than $n$. 
Then we can uniquely write $n=mN+r$  with $0\leq r<m$, and in fact $N=\lfloor \frac{n}{m}\rfloor$. 
 Both $m$ and $N$ will depend on $n$ (and we also write $m_n$ and $N_n$ to emphasize this when appropriate). With this assumption we have bins $B_k=[t_{m(k-1),n},t_{mk,n})$, $k=1,\ldots,N-1$ and $B_N=[t_{m(N-1),n},1]$. Note   that  the length of $B_k$ is then equal to $m/n$ for $k\leq N-1$, whereas $B_N$ has length $1-t_{m(N-1),n}=\frac{r+m}{n}<\frac{2m}{n}$.
For notational convenience later on, we also write
\begin{align*}
B_k&=[a_{k-1},a_k), \quad k=1,\ldots,N-1,\\ 
B_{N}&=[a_{N-1},a_{N}]. 
\end{align*}
Let
$
s=\sum_{k=1}^{N_n} \xi_k \ind_{B_k}.
$
The prior $\Pi_n$ on the dispersion coefficient $s$ is defined by putting a prior on the coefficients $\xi_k$'s. Since
\begin{equation}
\label{eq:series}
s^2=\sum_{k=1}^{N_n} \xi_k^2 \ind_{B_k}=\sum_{k=1}^{N_n} \theta_k \ind_{B_k},
\end{equation}
where we have put $\theta_k=\xi_k^2$, equivalently one can place the prior on the coefficients $\theta_k$'s of the diffusion coefficient $s^2$.

We call the prior $\Pi_n$ a histogram-type prior. Conceptually somewhat similar priors have already been employed in the nonparametric Bayesian density estimation context, as well as the Poisson intensity estimation context, see, e.g., \cite{scricciolo03}, \cite{scricciolo04}, \cite{scricciolo07}, \cite{arjas97}, \cite{heikkinen98}, \cite{castillo14}, \cite{castillo15} and \cite{gine11}, but our problem is rather different from density or Poisson intensity estimation and requires the use of many different ideas. In practice, e.g.\ in financial applications, one can hardly hope to estimate the diffusion coefficient to a very fine degree of detail, and in that respect using a prior with piecewise constant realisations does not appear to be unnatural. We note that a practical frequentist approach to nonparametric volatility estimation in \cite{sabel} likewise produces piecewise constant estimates. There is yet another practical advantage in using priors with piecewise constant realisations: financial time series often exhibit jumps, accurate detection of which is a delicate task. Due to their localised nature, our Bayesian estimates of the dispersion coefficient are likely to quickly recover from negative effects of a moderate number of undetected jumps.

If the prior coefficients $\th_1,\ldots, \th_N$ are independent and have an inverse gamma $\mathrm{IG}(\alpha,\beta)$ distribution with parameters $\alpha,\beta>0$, which will henceforth be our assumption, then the posterior is conjugate, as stated in the next lemma.
\begin{lemma}\label{lem:post-ig}
Assume $\th_1,\ldots, \th_N$ are independent with the inverse gamma $\mathrm{IG}(\alpha,\beta)$ distribution. Then $\th_1,\ldots, \th_N$ are a posteriori independent and, for $k=1,\ldots,N-1$,
\[ \th_k \mid \scr{X}_n \sim \mathrm{IG}\left(\alpha+m/2, \beta+ n Z_k/2\right). \]
with 
\[ Z_k = \sum_{i=(k-1)m+1}^{km} Y_{i,n}^2, \]
whereas
\[ \th_N \mid \scr{X}_n \sim \mathrm{IG}\left(\alpha+(m+r)/2, \beta+ n Z_N/2\right), \]
with 
\[ Z_N = \sum_{i=(N-1)m+1}^{n} Y_{i,n}^2. \]
\end{lemma}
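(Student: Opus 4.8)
The plan is to substitute the piecewise-constant form \eqref{eq:series} of $s^2$ directly into the pseudo-likelihood \eqref{likelih} and observe that it factorises over the bins $B_1,\dots,B_N$. Fix a bin index $k\le N-1$. Every subinterval $[t_{i-1,n},t_{i,n})$ with $i\in\{(k-1)m+1,\dots,km\}$ is contained in $B_k$ and has Lebesgue measure $1/n$, so $\int_{t_{i-1,n}}^{t_{i,n}}s^2(u)\,\dd u=\theta_k/n$ for each such $i$; similarly, for $i\in\{(N-1)m+1,\dots,n\}$ one gets $\int_{t_{i-1,n}}^{t_{i,n}}s^2(u)\,\dd u=\theta_N/n$, there being exactly $n-m(N-1)=m+r$ such subintervals. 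Hence the $i$-th factor of $L_n(s)$ equals $\sqrt{n/(2\pi\theta_k)}\,\exp(-nY_{i,n}^2/(2\theta_k))$ when $i$ lies in $B_k$, and multiplying the factors within each bin yields
\[
L_n(s)=\Bigl(\tfrac{n}{2\pi}\Bigr)^{n/2}\;\Biggl(\prod_{k=1}^{N-1}\theta_k^{-m/2}\exp\!\Bigl(-\tfrac{nZ_k}{2\theta_k}\Bigr)\Biggr)\;\theta_N^{-(m+r)/2}\exp\!\Bigl(-\tfrac{nZ_N}{2\theta_N}\Bigr),
\]
with $Z_k$, $Z_N$ as in the statement.

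Next I would multiply this by the prior density of $(\theta_1,\dots,\theta_N)$, which, since the $\theta_k$ are independent $\mathrm{IG}(\alpha,\beta)$, is $\prod_{k=1}^N \frac{\beta^\alpha}{\Gamma(\alpha)}\theta_k^{-\alpha-1}\exp(-\beta/\theta_k)$. Collecting the powers of $\theta_k$ and the exponential terms bin by bin, the (unnormalised) posterior density is proportional to
\[
\Biggl(\prod_{k=1}^{N-1}\theta_k^{-(\alpha+m/2)-1}\exp\!\Bigl(-\tfrac{\beta+nZ_k/2}{\theta_k}\Bigr)\Biggr)\;\theta_N^{-(\alpha+(m+r)/2)-1}\exp\!\Bigl(-\tfrac{\beta+nZ_N/2}{\theta_N}\Bigr).
\]
One then recognises, for $k\le N-1$, the kernel of an $\mathrm{IG}(\alpha+m/2,\ \beta+nZ_k/2)$ density in the variable $\theta_k$, and for $k=N$ the kernel of an $\mathrm{IG}(\alpha+(m+r)/2,\ \beta+nZ_N/2)$ density. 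Because the posterior density factorises as a product of one-dimensional densities in the $\theta_k$, the coordinates are a posteriori independent and each has the stated distribution; normalisation is automatic from the known normalising constants of inverse gamma laws.

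There is no genuine analytic difficulty here; the one point requiring care is the index bookkeeping around the decomposition $n=mN+r$ — in particular verifying that $B_N$ has length $(m+r)/n$ and contains precisely $m+r$ of the elementary subintervals $[t_{i-1,n},t_{i,n})$, which is what produces the shift $m\mapsto m+r$ in the shape parameter of the last coordinate's posterior. The measurability requirements alluded to before Bayes' formula are trivially met since everything is finite-dimensional.
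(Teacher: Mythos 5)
Your proof is correct and takes essentially the same route as the paper: substitute the piecewise-constant $s^2$ into the Gaussian pseudo-likelihood, observe it factorises bin by bin, multiply by the independent $\mathrm{IG}(\alpha,\beta)$ prior densities, and read off the inverse-gamma kernels coordinate by coordinate. The only difference is that you spell out the index bookkeeping (the $m+r$ subintervals in $B_N$) more explicitly than the paper, which simply states the proportional form and says the result follows.
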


\label{section:proofsig}
\begin{proof}
Write $\theta=(\theta_1,\ldots,\theta_N)$.
The  likelihood, considered as a function of  $\theta$, is given by 
\begin{multline*} L_n(\th) =\prod_{i=1}^n \phi\left(0; Y_{i,n}, \int_{t_{i-1}}^{t_i} s^2(u) d u \right)\\ \propto  \theta_N^{-(m+r)/2} \exp\left(-\frac{n Z_N}{2\th_N} \right) \prod_{k=1}^{N-1} \th_k^{-m/2} \exp\left(-\frac{n Z_k}{2\th_k} \right). \end{multline*}
Here $\phi(x; \mu, \si^2)$ denotes the density of the normal distribution with mean $\mu$ and variance $\si^2$ evaluated at point $x$. As the prior satisfies
$p(\th_1,\ldots, \th_N) \propto \prod_{k=1}^N \th_k^{-(\alpha+1)} e^{-\beta/\th_k},$
the result easily follows. 
\end{proof}

Posterior computations thus turn out to be elementary with our approach. For instance, the posterior mean of $s^2$ can be obtained from the posterior means of $\theta_k$'s. Recall that the $\mathrm{IG}(\alpha,\beta)$ distribution has mean $\beta/(\alpha-1)$ and variance $\beta^2/(\alpha-1)^2(\alpha-2)$ (if finite, to which end one must have $\alpha>2$). Then, e.g., for $k<N$ and $m\geq 2$, the posterior mean of $\theta_k$ is equal to
\begin{equation}\label{postgamma}
\ee_{\Pi_n}(\theta_k\mid\mathcal{X}_n)=\frac{\beta+ n Z_k/2}{\alpha+m/2-1}.
\end{equation}
Conceptually the posterior mean of $s^2$ in this context is similar to a regressogram; see, e.g., Examples 4.5 and 5.24 in \cite{wasserman06}. However, the Bayesian approach deals with the \emph{entire posterior distribution} and does not reduce to a point estimate, such as the posterior mean.

\begin{rem}
Instead of a histogram-type representation in \eqref{eq:series}, one could have tried to base the prior on some other series representation for $s^2$. At first sight, e.g.\ splines are a sensible choice to that end. However, enforcing spline basis functions to have disjoint supports with endpoints at $t_{n,i}$'s does not appear to be a natural procedure, whereas other choices would not have lead to simple posterior computations as in Lemma \ref{lem:post-ig}.
\end{rem}

\section{Asymptotic theory}\label{section:asymp}

\subsection{Generalities}\label{section:general}

A highly desirable property of a Bayesian procedure, in particular from the frequentist point of view, is that the posterior asymptotically concentrates around the true parameter value. In fact, studying the rate at which the posterior contracts around the true parameter is similar to studying  convergence rates of frequentist estimators. Some of by now classical references, where general conditions for derivation of posterior contraction rates are given, include \cite{ghosal00}, \cite{ghosal07} and \cite{shen01}. However, we will follow a rather different and more direct path of the proof.

For $\varepsilon>0,$ we denote by
\begin{equation*}
U_{s_0,\varepsilon}=\left\{ s\in\mathcal{S}_n\colon\|s-s_0\|_2<\varepsilon \right\}
\end{equation*}
the  $L_2$-neighbourhood of $s_0$ of radius $\varepsilon$. 

In the next proposition, we show that without loss of generality one may assume $b_0=0$ in the proofs. This proposition also explains why ignoring the drift in our estimation procedure by intentionally setting it to zero still leads to consistent Bayesian estimation of $s_0.$ The corresponding theoretical argument relies on an application of Girsanov's theorem (see, e.g., Section 3.5 in \cite{karatzas}). We would like to stress the fact that given the simplicity of our prior and intentional misspecification of the likelihood, the possibility of consistent estimation of $s_0$ with our approach is not obvious and requires a thorough investigation. 

\begin{prop}
\label{prop1}
Let Assumption~\ref{standing} hold and assume that for $\varepsilon_n\rightarrow 0$
\begin{equation*}
\ee_{0,s_0}[\Pi_n(U_{s_0,\varepsilon_n}^c\mid\mathcal{X}_n) ] \rightarrow  0
\end{equation*}
as $n\rightarrow\infty.$
Then also
\begin{equation*}
\ee_{b,s_0}[\Pi_n(U_{s_0,\varepsilon_n}^c\mid\mathcal{X}_n)] \rightarrow 0.
\end{equation*}
\end{prop}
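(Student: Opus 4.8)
The plan is to move from the law $\mathbb{P}_{0,s_0}$ (zero drift) to $\mathbb{P}_{b,s_0}$ (true drift $b$) by an absolutely-continuous change of measure supplied by Girsanov's theorem, exploiting that $\Pi_n(U_{s_0,\eps_n}^c\mid\mathcal{X}_n)$ is a bounded measurable functional of the discretely sampled path. Write $G_n\defeq\Pi_n(U_{s_0,\eps_n}^c\mid\mathcal{X}_n)$; then $0\le G_n\le1$ and $G_n$ is measurable with respect to $\sigma(\mathcal{X}_n)\subseteq\mathcal{F}_1$, so the assertion to prove is just $\ee_{b,s_0}[G_n]\to0$. Under $\mathbb{P}_{0,s_0}$ the coordinate process solves $\dd X_t=s_0(t)\,\dd W_t$; since by Assumption~\ref{standing} the coefficient $s_0$ is bounded away from zero and $b$ has linear growth, $\int_0^1\bigl(b(t,X_t)/s_0(t)\bigr)^2\,\dd t<\infty$ $\mathbb{P}_{0,s_0}$-a.s., and (the linear-growth and Lipschitz conditions rule out explosion and, by a standard argument, make the exponential a true martingale; see \cite{karatzas}) the exponential local martingale
\[
M\defeq\exp\!\left(\int_0^1\frac{b(t,X_t)}{s_0(t)}\,\dd W_t-\frac12\int_0^1\frac{b(t,X_t)^2}{s_0(t)^2}\,\dd t\right)
\]
is a genuine martingale with $\ee_{0,s_0}[M]=1$. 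Together with uniqueness in law for \eqref{sde}, this yields $\mathbb{P}_{b,s_0}\ll\mathbb{P}_{0,s_0}$ on $\mathcal{F}_1$ with Radon--Nikodym density $M$, and therefore $\ee_{b,s_0}[G_n]=\ee_{0,s_0}[G_nM]$.

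From here I would finish with a short uniform-integrability argument. For every $C>0$,
\[
\ee_{0,s_0}[G_nM]=\ee_{0,s_0}\!\bigl[G_nM\ind_{\{M\le C\}}\bigr]+\ee_{0,s_0}\!\bigl[G_nM\ind_{\{M>C\}}\bigr]\le C\,\ee_{0,s_0}[G_n]+\ee_{0,s_0}\!\bigl[M\ind_{\{M>C\}}\bigr],
\]
using $M\le C$ on the first event and $0\le G_n\le1$ on the second. The random variable $M$ does not depend on $n$ and is $\mathbb{P}_{0,s_0}$-integrable, hence $\ee_{0,s_0}[M\ind_{\{M>C\}}]\to0$ as $C\to\infty$ by dominated convergence. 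Given $\eta>0$, first choose $C$ so that $\ee_{0,s_0}[M\ind_{\{M>C\}}]<\eta/2$, and then, invoking the hypothesis $\ee_{0,s_0}[G_n]\to0$, choose $n_0$ so that $C\,\ee_{0,s_0}[G_n]<\eta/2$ for $n\ge n_0$; this gives $\ee_{b,s_0}[G_n]<\eta$ for $n\ge n_0$. Since $\eta$ was arbitrary, $\ee_{b,s_0}[G_n]\to0$, which is the claim.

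The step I expect to be the real obstacle is the first one: rigorously justifying the change of measure, i.e.\ that the Girsanov exponential $M$ is a true (not merely local) martingale, hence that $\mathbb{P}_{b,s_0}$ is absolutely continuous with respect to $\mathbb{P}_{0,s_0}$ on $\mathcal{F}_1$, under only the linear-growth/Lipschitz hypotheses on $b$ (a global Novikov condition is not directly at hand, and one has to argue via localisation at exit times together with non-explosion, or via a partitioning criterion as in \cite{karatzas}). Everything after that is soft: the display above plus the fact that a single integrable random variable is uniformly integrable. The only other point worth a line is checking the measurability of $G_n$ as a functional of $\mathcal{X}_n$ and the mild regularity making Bayes' formula meaningful, but this is routine.
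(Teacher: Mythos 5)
Your proposal follows essentially the same route as the paper: Girsanov's theorem gives $\mathbb{P}_{b,s_0}\ll\mathbb{P}_{0,s_0}$ with density $Z$ (your $M$), and then one transfers $\ee_{0,s_0}[G_n]\to0$ to $\ee_{b,s_0}[G_n]\to0$ using absolute continuity. Your truncation step $\ee_{0,s_0}[G_nM]\le C\,\ee_{0,s_0}[G_n]+\ee_{0,s_0}[M\ind_{\{M>C\}}]$ is precisely the content of the ``absolute continuity'' lemma (Lemma~13.1 in Williams) that the paper invokes, just unpacked; the paper also routes through the conditional density $Z^n=\ee_{0,s_0}(Z\mid\mathcal{X}_n)$ and works with convergence in probability, but that detour is inessential since $G_n$ is $\mathcal{F}_1$-measurable and bounded. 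For the hard step — that the Girsanov exponential is a true martingale so that the laws on $\mathcal{F}_1$ are equivalent under only Lipschitz/linear-growth hypotheses on $b$ — the paper simply cites Theorem~6.10 in H\"opfner, which is exactly the gap you correctly flag.
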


\subsection{Posterior contraction rates}\label{section:ig}
As we consider the asymptotics $n\rightarrow\infty$, we take the number of bins $N$ to depend on the sample size $n$, and indicate this in our notation by writing $N_n$. Then also $m$ depends on $n$, and we write $m_n$ to emphasise this dependence.

Assume that Assumption \ref{standing} holds for the remainder of this subsection.  The following theorem shows that posterior contracts at rate $n^{-\lambda/(2\lambda+1)}$ in $L_2$.

\begin{thm}\label{thm:rate-posterior-ig}\label{cor:rate:b}
  Assume  $N_n \asymp n^{1/(2\lambda+1)}$.  If we let  $ \eps_n\asymp n^{-\lambda/(2\lambda+1)}$, then for any sequence $h_n$ tending to infinity (as $n\to \infty$) we have
\[ 
\exzb \left[ \Pi_n(\|s^2-s_0^2\|_2 \ge h_n \eps_n \mid \scr{X}_n) \right]  \to 0
\]
as $n\to \infty.$
\end{thm}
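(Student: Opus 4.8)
By Proposition~\ref{prop1} it suffices to work under $\ppz$, i.e.\ with $b_0=0$, so that the exact likelihood coincides with the Gaussian pseudo-likelihood \eqref{likelih} and the explicit posterior of Lemma~\ref{lem:post-ig} is available. The plan is therefore a direct, hands-on analysis of the inverse gamma posterior rather than an appeal to the general testing machinery of \cite{ghosal00}. Write $s_0^2 = \sum_{k=1}^{N_n} \theta_{0,k}\ind_{B_k} + \rho_n$, where $\theta_{0,k}$ is the $L_2$-projection (or the average) of $s_0^2$ onto $B_k$ and $\rho_n$ is the approximation error; H\"older continuity of $s_0$ (hence of $s_0^2$, since $s_0$ is bounded) with exponent $\lambda$ and bin width $\asymp m_n/n \asymp n^{-2\lambda/(2\lambda+1)}$ gives $\|\rho_n\|_\infty \lesssim (m_n/n)^\lambda \asymp n^{-\lambda/(2\lambda+1)} \asymp \eps_n$, so the bias term is of the right order and we may henceforth compare $\sum_k \theta_k \ind_{B_k}$ with $\sum_k \theta_{0,k}\ind_{B_k}$. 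Since $\|\sum_k(\theta_k-\theta_{0,k})\ind_{B_k}\|_2^2 = \sum_k (\theta_k-\theta_{0,k})^2 |B_k| \asymp (m_n/n)\sum_k (\theta_k-\theta_{0,k})^2$, it is enough to show
\[
\exz\Bigl[\Pi_n\Bigl(\tfrac{m_n}{n}\sum_{k=1}^{N_n}(\theta_k-\theta_{0,k})^2 \ge c\, h_n^2 \eps_n^2 \,\Big|\, \scr{X}_n\Bigr)\Bigr] \to 0
\]
for a suitable constant $c$, and one checks that $\eps_n^2 \asymp N_n^{-1}\cdot(\text{const})$, i.e.\ $(m_n/n)\,N_n\,\eps_n^{-2}\eps_n^2 = N_n\cdot(m_n/n) = \Theta(1)$ only after multiplying by $N_n$; concretely $\eps_n^2 = n^{-2\lambda/(2\lambda+1)}$ and $m_n/n \asymp n^{-2\lambda/(2\lambda+1)}$, so the target reduces to controlling $\sum_k (\theta_k-\theta_{0,k})^2$ at order $h_n^2 N_n$.

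\emph{Bias of the posterior.} From Lemma~\ref{lem:post-ig}, for $k<N_n$ the conditional law of $\theta_k$ is $\mathrm{IG}(\alpha+m_n/2,\ \beta+nZ_k/2)$ with $Z_k=\sum_{i\in I_k} Y_{i,n}^2$, $|I_k|=m_n$. Under $\ppz$ one has $Y_{i,n} = \int_{t_{i-1}}^{t_i} s_0(u)\,\dd W_u$, so $nZ_k$ concentrates around $n\sum_{i\in I_k}\int_{t_{i-1}}^{t_i}s_0^2 = n\int_{B_k}s_0^2 \approx m_n\,\theta_{0,k}$; more precisely $n\,\ex[Z_k\mid\cdot] = n\int_{B_k}s_0^2$ and $Z_k/\theta_{0,k}$ behaves like a sum of $m_n$ roughly-i.i.d.\ $\chi^2_1$ terms scaled by $1/n$. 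The posterior mean of $\theta_k$ is $(\beta+nZ_k/2)/(\alpha+m_n/2-1) = \theta_{0,k} + O_{\mathbb{P}}(\theta_{0,k}/\sqrt{m_n}) + O(1/m_n)$, and since $\theta_{0,k}$ is bounded and $m_n\asymp n^{2\lambda/(2\lambda+1)}\to\infty$, the posterior centering is $\theta_{0,k}$ up to a fluctuation of order $m_n^{-1/2}$. Squaring and summing over the $N_n$ bins gives a contribution of order $N_n/m_n = o(N_n)$ from the deterministic bias of the posterior location, which is negligible on the scale $h_n^2 N_n$.

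\emph{Posterior spread plus sampling fluctuation.} It remains to bound $\exz[\vpi(\sum_k\theta_k\mid\scr{X}_n)\cdot(\text{scaling})]$ and the genuinely random part $\sum_k(\expi[\theta_k\mid\scr{X}_n]-\theta_{0,k})^2$. For the posterior variance, $\vpi(\theta_k\mid\scr{X}_n) = (\beta+nZ_k/2)^2/\bigl((\alpha+m_n/2-1)^2(\alpha+m_n/2-2)\bigr) \asymp \theta_{0,k}^2/m_n$ with high probability, so $\sum_k\vpi(\theta_k\mid\scr{X}_n)\asymp N_n/m_n = o(N_n)$, again negligible. For the sampling fluctuation of the centering, $\ex[(nZ_k/m_n-\theta_{0,k})^2\mid\cdot]\asymp \theta_{0,k}^2/m_n$, so $\exz\sum_k(\expi[\theta_k\mid\scr{X}_n]-\theta_{0,k})^2 \asymp N_n/m_n = o(N_n)$. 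Assembling the pieces, the $L_2^2$-distance between the posterior draw of $s^2$ and $s_0^2$ has expectation of order $\|\rho_n\|_2^2 + (m_n/n)\cdot O(N_n/m_n) = \eps_n^2 + n^{-1}N_n = \eps_n^2 + n^{-2\lambda/(2\lambda+1)} \asymp \eps_n^2$, and a Markov inequality (applied to the nonnegative random variable $\Pi_n(\|s^2-s_0^2\|_2\ge h_n\eps_n\mid\scr{X}_n)$ after first bounding it by $(h_n\eps_n)^{-2}\expi[\|s^2-s_0^2\|_2^2\mid\scr{X}_n]$ and taking $\exz$) yields the claim, since $h_n\to\infty$.

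\emph{Main obstacle.} The routine-looking steps hide two real difficulties. First, the concentration statements for $Z_k$ and the attendant ``with high probability'' estimates for the posterior variance and mean must be made uniform over all $N_n\to\infty$ bins simultaneously and, crucially, in $L_1(\ppz)$ rather than merely in probability, because the quantity of interest is an \emph{expectation} of a posterior probability; this forces one to control the inverse gamma moments $\exz[(\beta+nZ_k/2)^p/(\alpha+m_n/2-1)^p]$ and in particular to rule out the event that some $Z_k$ is atypically small (which would blow up a negative moment) — here one uses that $s_0$ is bounded away from zero, so $Z_k$ is stochastically bounded below, and that only nonnegative-power moments of $(\beta+nZ_k/2)$ enter after the $\mathrm{IG}$ mean/variance formulas. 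Second, one must verify that the prior hyperparameters $\alpha,\beta$ (fixed, not depending on $n$) do not disturb the balance: this is why $m_n\to\infty$ is essential, making the ``$\alpha$'' and ``$\beta$'' shifts lower-order. Carrying these uniform-in-$k$, $L_1$-in-sample estimates through cleanly — rather than the algebra of rates, which works out exactly because $N_n\asymp n^{1/(2\lambda+1)}$ equalizes bias$^2 \asymp n^{-2\lambda/(2\lambda+1)}$ and variance $\asymp N_n/n \asymp n^{-2\lambda/(2\lambda+1)}$ — is the technical heart of the proof.
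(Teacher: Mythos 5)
Your proposal is correct and follows essentially the same route as the paper: reduce to $b_0=0$ via Proposition~\ref{prop1}, bound the posterior probability by $(h_n\eps_n)^{-2}\expi(\|s^2-s_0^2\|_2^2\mid\scr{X}_n)$ and take $\exz$, then split $\exz\bigl[\expi(\|s^2-s_0^2\|_2^2\mid\scr{X}_n)\bigr]$ into the squared distance of the posterior mean $M$ from $s_0^2$ plus the expected posterior variance, and control both with the explicit inverse-gamma formulas of Lemma~\ref{lem:post-ig}, the H\"older smoothness of $s_0$, and the balancing $m_n\asymp n^{2\lambda/(2\lambda+1)}$. Your ``main obstacle'' paragraph slightly overstates the difficulty: the paper never argues ``with high probability'' and then passes to $L_1$ uniformly over the $N_n$ bins; it just computes $\exz[Z_k]$ and $\exz[Z_k^2]$ exactly (bounded uniformly in $k$ using only $s_0\le\mathcal{K}$), which is all that is needed since the posterior mean and variance are polynomials in $Z_k$ — precisely the observation you make at the end of that paragraph, but it is the whole resolution rather than a caveat, and the lower bound on $s_0$ plays no role in this part of the argument.
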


In the next theorem we give the posterior contraction rate for the sup-norm.

\begin{thm}\label{cor:rate:b:sup}
Assume  $N_n\asymp n^{1/(2\lambda+1)}$.   
If we let  $ \tilde\eps_n\asymp n^{-\lambda/(2\lambda+2)}$, then for any sequence $h_n$ tending to infinity (as $n\to \infty$) we have 
\[ \exzb \left[ \Pi_n\Big(\sup_{x\in [0,1]}|s^2(x)-s_0^2(x)| \ge h_n \tilde\eps_n \mid \scr{X}_n\Big) \right]  \to 0 \]
as $n\to \infty.$
\end{thm}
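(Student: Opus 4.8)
The plan is to combine the change-of-measure argument behind Proposition~\ref{prop1} with a bias--variance split carried out uniformly over the $N_n$ bins. The Girsanov/likelihood-ratio computation that proves Proposition~\ref{prop1} nowhere uses the particular shape of the neighbourhood, so I would invoke it for the events $\{s:\sup_{x\in[0,1]}|s^2(x)-s_0^2(x)|\ge h_n\tilde\eps_n\}$ to reduce the claim to the case $b_0=0$; from then on everything is under $\ppz$, where the increments $Y_{i,n}=\int_{t_{i-1,n}}^{t_{i,n}}s_0(u)\,\dd W_u$ are independent centred Gaussians with known variances $v_{i,n}=\int_{t_{i-1,n}}^{t_{i,n}}s_0^2(u)\,\dd u$. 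Write $\hat\mu_k=\expi(\th_k\mid\mathcal{X}_n)=(\beta+nZ_k/2)/(\alpha+m/2-1)$ for the posterior mean from Lemma~\ref{lem:post-ig} (with $m$ replaced by $m+r<2m$ when $k=N$) and $\mu_k=|B_k|^{-1}\int_{B_k}s_0^2$ for the bin average. Since $s^2=\sum_k\th_k\ind_{B_k}$ is piecewise constant, a triangle inequality on each bin followed by maximisation over $k$ gives
\[
\sup_{x\in[0,1]}|s^2(x)-s_0^2(x)|\ \le\ \max_k|\th_k-\hat\mu_k|\ +\ \max_k|\hat\mu_k-\mu_k|\ +\ \max_k\sup_{x\in B_k}|\mu_k-s_0^2(x)|.
\]
Because $s_0$ is H\"older$(\lambda)$ and bounded away from $0$ and $\infty$, so is $s_0^2$, whence the last (deterministic) term is $\lesssim\max_k|B_k|^\lambda\lesssim N_n^{-\lambda}\asymp n^{-\lambda/(2\lambda+1)}=o(\tilde\eps_n)$ and is therefore $<\tfrac13 h_n\tilde\eps_n$ for $n$ large. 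It then remains to control the two random terms, uniformly in $k$.

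For the first term I would use $\exz[Z_k]=\int_{B_k}s_0^2=|B_k|\mu_k$ to write
\[
\hat\mu_k-\mu_k=\frac{\big(\beta-(\alpha-1)\mu_k\big)+\tfrac12\sum_{i\in B_k}(nv_{i,n})(\chi_i^2-1)}{\alpha+m/2-1},
\]
where $\chi_i^2:=Y_{i,n}^2/v_{i,n}$ are independent $\chi^2_1$ variables and the weights $nv_{i,n}$ are bounded above and below by positive constants (as $s_0$ is). The numerator is then an $O(1)$ deterministic shift plus a centred sum of $m$ independent sub-exponential variables with $O(1)$ parameters, so a Bernstein inequality plus a union bound over the $N_n$ bins yields $\max_k|\hat\mu_k-\mu_k|=O_{\ppz}\big(\sqrt{(\log N_n)/m}\big)$. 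With $N_n\asymp n^{1/(2\lambda+1)}$ and $m\asymp n^{2\lambda/(2\lambda+1)}$ this is $\asymp n^{-\lambda/(2\lambda+1)}\sqrt{\log n}=o(\tilde\eps_n)$, so $\ppz(\max_k|\hat\mu_k-\mu_k|\ge\tfrac13 h_n\tilde\eps_n)\to0$; the same estimate furnishes an event $A_n$ with $\ppz(A_n)\to1$ on which $\max_k\hat\mu_k\le 2\sup_{[0,1]}s_0^2$ and $\alpha+m/2-1\gtrsim m$.

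For the posterior-spread term I would work on $A_n$ with the data fixed. By Lemma~\ref{lem:post-ig}, $1/\th_k\mid\mathcal{X}_n\sim\mathrm{Gamma}(\alpha+m/2,\beta+nZ_k/2)$, which concentrates around its mean with sub-gamma tails; transferring this to $\th_k$ gives a per-bin bound of the form $\Pi_n(|\th_k-\hat\mu_k|>3\hat\mu_k\delta\mid\mathcal{X}_n)\le 2\exp(-cm\delta^2)$ valid for $c/m\le\delta\le\tfrac12$. Choosing $\delta\asymp\sqrt{(\log N_n)/m}$ and taking a union bound over the $N_n$ bins gives $\Pi_n(\max_k|\th_k-\hat\mu_k|>C\sqrt{(\log N_n)/m}\mid\mathcal{X}_n)\le 2N_n^{1-cC^2}\to0$ on $A_n$ once $C$ is large enough. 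Since $\ppz(A_n^c)\to0$ and $C\sqrt{(\log N_n)/m}=o(\tilde\eps_n)$, this gives $\exz[\Pi_n(\max_k|\th_k-\hat\mu_k|\ge\tfrac13 h_n\tilde\eps_n\mid\mathcal{X}_n)]\to0$. Summing the three contributions (the middle one depends on the data only, so its posterior probability is an indicator) gives $\exz[\Pi_n(\sup_{x\in[0,1]}|s^2(x)-s_0^2(x)|\ge h_n\tilde\eps_n\mid\mathcal{X}_n)]\to0$, and undoing the reduction via Proposition~\ref{prop1} finishes the argument.

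The main obstacle is the uniformity over the $N_n\asymp n^{1/(2\lambda+1)}$ bins in the last two steps: a union bound combined with only second-moment (Chebyshev) control of the per-bin fluctuations of $Z_k$ and of $\th_k\mid\mathcal{X}_n$ is too lossy and would force $h_n$ to grow at a polynomial rate, so one genuinely needs the exponential concentration of these $\chi^2$-type and inverse-Gamma quantities (equivalently, $p$-th moment bounds with $p\asymp\log N_n$). This is affordable precisely because the target $\tilde\eps_n=n^{-\lambda/(2\lambda+2)}$ is, by design, coarser than $n^{-\lambda/(2\lambda+1)}\sqrt{\log n}$, so the $\sqrt{\log N_n}$ inflation from the union bound is comfortably absorbed; the individual per-bin estimates are the same ones entering the proof of Theorem~\ref{thm:rate-posterior-ig}.
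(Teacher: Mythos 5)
Your proof is correct, and it takes a genuinely different route that in fact delivers a strictly stronger conclusion than the paper's argument. The paper works with the posterior mean $M$ and, in the second line of \eqref{rate:loss}, replaces $\sup_k|M_k-\exz[M_k]|^2$ by $\sum_k|M_k-\exz[M_k]|^2$, after which it just takes $\ppz$-expectations; this costs a factor $N_n$ inside the expectation, i.e.\ $\sqrt{N_n}$ on the error scale, and forces the slower exponent $\lambda/(2\lambda+2)$. The authors themselves flag this in the discussion following the theorem as ``a somewhat rough bound'' that is ``likely to be an artefact of our proof.'' You avoid that step entirely: you split $\sup_x|s^2(x)-s_0^2(x)|$ on each bin into posterior spread $|\theta_k-\hat\mu_k|$, estimation error $|\hat\mu_k-\mu_k|$, and histogram bias $|\mu_k-s_0^2(x)|$, and you control the two random maxima via per-bin exponential concentration (Bernstein for the sub-exponential $\chi^2_1$-type fluctuations behind $nZ_k$, sub-gamma tails for the inverse-Gamma posterior of $\theta_k$, noting that $1/\theta_k\mid\mathcal{X}_n$ is Gamma with shape $\asymp m$) plus a union bound over the $N_n$ bins. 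This costs only a $\sqrt{\log N_n}$ inflation instead of $\sqrt{N_n}$, so with the theorem's stated $N_n\asymp n^{1/(2\lambda+1)}$ all three terms are of order $n^{-\lambda/(2\lambda+1)}\sqrt{\log n}$ or smaller, which is $o(\tilde\eps_n)$; the claim then follows a fortiori. Two points worth registering: (i) the paper's proof balances its cruder bound at $m\asymp n^{(2\lambda+1)/(2\lambda+2)}$, which does not match the $N_n\asymp n^{1/(2\lambda+1)}$ appearing in the theorem statement, whereas your argument is internally consistent with that choice; (ii) your appeal to Proposition~\ref{prop1} for sup-norm neighbourhoods is legitimate, since that proof only uses mutual absolute continuity of $\pp$ and $\ppz$ and is blind to the shape of the event whose posterior probability is being controlled. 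In short, the two arguments share the same bias--variance scaffolding on bins, but your replacement of ``sup $\le$ sum $+$ Chebyshev'' by ``exponential concentration $+$ union bound'' removes precisely the inefficiency the authors identify, and recovers the near-optimal rate up to a $\sqrt{\log n}$ factor.
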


\subsection{Discussion}
Now we provide some discussion on the obtained theoretical results.

\begin{rem}
Establishing posterior contraction in the $L_2$-metric is rather natural, as $\int_0^1 s_0^2(t)\dd t$ is the quadratic variation of the process $(X_t:t\in [0,1])$ over the interval $[0,1].$ It is also the variance of $X_1$ when the drift coefficient $b_0$ is a deterministic function depending only on time.
\end{rem}

\begin{rem}
The inequality
\begin{equation*}
\|s^2-s_0^2\|_2 \geq \kappa\|s-s_0\|_2 ,
\end{equation*}
valid for $s_0$ satisfying Assumption \ref{standing} (here $\kappa>0$ is a lower bound of $s_0$), implies that the rate of Theorem~\ref{thm:rate-posterior-ig} is also valid with $\|s-s_0\|_2$. A similar remark applies to the posterior contraction in Theorem~\ref{cor:rate:b:sup}.
\end{rem}

\begin{rem}
A comparison with the frequentist minimax convergence rate in \cite{hoffmann97} shows that the posterior for the diffusion coefficient contracts at the optimal rate in the $L_2$-metric (strictly speaking, the results in the latter paper are given for $B^{s}_{p,q}$-Besov smooth diffusion coefficients with $s>1$, but general arguments for derivation of lower bounds in our statistical setup are classical and work also in the H\"older setting). With histogram-type priors considered in this work no further improvement in the posterior contraction rate, when $\lambda=1$, is possible beyond $n^{-1/3},$ even if the function $s_0$ is smoother than a Lipschitz function. An intuitive reason for this is that realisations of our histogram-type priors are too rough for this; cf.\ p.~629 in \cite{scricciolo07}.
\end{rem}

\begin{rem}
There exists an excellent and deep reference on Bayesian inference in misspecified infinite-dimensional models, namely \cite{kleijn}. That paper provides some additional intuition why our approach is still successful despite the model misspecification. Summarised somewhat simplistically, the results from \cite{kleijn} say that the posterior in misspecified statistical models asymptotically concentrates around that value from the parameter space that is closest to the `true' parameter value in the sense of the minimal Kullback-Leibler distance between respective probability distributions. Asymptotically the observation scheme as in Assumption \ref{standing}~(d) is almost as good as observing the process $X$ continuously over the interval $[0,1].$ On the other hand, the laws corresponding to paths $(X_t:t \in[0,1])$ with two different diffusion coefficients are mutually singular, see Theorem 3.24 in  \S III.3d, \cite{jacod}, with a consequence that the corresponding Kullback-Leibler divergence is infinite. Hence, irrespective of the prior assumptions on the drift coefficient, the posterior for the dispersion coefficient (equivalently, diffusion coefficient) should concentrate around the `true' dispersion coefficient $s_0$ (equivalently, the true diffusion coefficient $s_0^2$), for it is precisely this parameter value that yields  finite Kullback-Leibler divergence between the laws of the `true' and various misspecified models.
\end{rem}

\begin{rem}
The result in Theorem~\ref{cor:rate:b:sup} has to be compared to similar results on estimation of the volatility (not necessarily a deterministic function of time, as in our paper) in the $L_{\infty}$-norm. Such results  have been obtained in \cite{ait2014}, \cite{kanaya16}, \cite{kristensen10} and \cite{malliavin09}. As observed in \cite{ait2014}, p.~273, `near to nothing is known on this topic'. \cite{malliavin09} establish consistency of their Fourier-based estimator of volatility, without deriving its convergence rate. The asymptotics considered in \cite{kanaya16} are different from the one in the present work, which makes a direct comparison difficult. Finally, Theorem 3.5 in \cite{kristensen10} gives a convergence rate of a  kernel estimator of volatility; when translated to our setting, the corresponding optimal convergence rate, up to a log factor, is $n^{-\lambda/(2\lambda + 1)}$. This is a faster rate than the rate obtained in our Theorem~\ref{cor:rate:b:sup}. However, the suboptimal rate in Theorem~\ref{cor:rate:b:sup} is likely to be an artefact of our proof, specifically a somewhat rough bound employed in the second inequality of \eqref{rate:loss}. Unlike our method, the kernel estimator in \cite{kristensen10} suffers from the boundary bias problem, which necessitates studying its asymptotic properties on a time interval strictly contained in $[0,T]$.
\end{rem}


\begin{rem}
In \cite{soulier98}, the following frequentist estimator of $s_0^2$ is introduced,
\begin{equation*}
\hat{s}^2(t)=\sum_{i=1}^{n} K_h(t,t_i) Y_{i,n}^2, 
\end{equation*}
where $K$ is a kernel function, a constant $h>0$ is a bandwidth, and
$K_h(s,t)=\frac{1}{h}K\left(\frac{t-s}{h}\right)$
is a rescaled kernel. Suppose now $K$ is a boxcar kernel, $K(u)=(1/2) 1_{|u|\leq 1},$ see p.~55 in \cite{wasserman06}. Then
\begin{equation*}
\hat{s}^2(t)=\frac{1}{2h} \sum_{i\colon|t-t_{i,n}|\leq h} Y_{i,n}^2.
\end{equation*}
A brief reflection shows that for $k<N$, $n$ large, and $h=m/(2n)$ (half the bin length),  $\hat{s}^2$ is quite similar to \eqref{postgamma}, the difference being that in that formula averaging occurs over individual bins, while here one averages locally over observations in a neighbourhood of each time point $t.$ We note, however, that the asymptotic theory in \cite{soulier98} does not cover the asymptotics of the posterior mean as in \eqref{postgamma}, and also that the regularity conditions of that paper are different from ours. On the other hand, practical computation of the kernel estimator $\hat{s}^2$ would typically require from a user some form of data binning, cf.\ Appendix D.2 in \cite{wand95}, so that from this point of view the posterior mean and the estimator $\hat{s}^2$ are closely related.
\end{rem}

\begin{rem}
We have already pointed out in the introduction that our setup covers more general SDE models than those with deterministic dispersion coefficients, see equation \eqref{sde2}; this follows from an application of It\^{o}'s formula. Under regularity conditions, a further generalisation of our results is possible to the case when the dispersion coefficient $s_0$ is in fact a stochastic process independent of the driving Wiener process in \eqref{sde} (we cite an interesting example in this context: a widely known stochastic volatility model arising as a diffusion limit of a $\operatorname{GARCH}(1,1)$ process, see \cite{nelson90}). Namely, in this case one can simply follow the Bayesian methodology we described in Section \ref{section:model} with no further changes, acting as if the dispersion coefficient were a deterministic function. Despite (yet another) purposeful misspecification, the resulting Bayesian procedure is consistent. This can be established by combining arguments in the present paper with the ones similar to those in \cite{kristensen10}, that deal with a kernel volatility estimator. Indeed, careful examination shows that one of the main steps in our proofs is what can be termed a Bayesian bias-variance decomposition, see the proof of Theorem \ref{thm:rate-posterior-ig}. Both the `bias' and `variance' terms there are analysed using techniques similar to those employed in e.g. kernel regression or density estimation problems, whence a possibility for further generalisations. Space considerations preclude us from studying this interesting question in detail in the present work. 
\end{rem}

\section{Bin number selection via DIC}
\label{sec:dic}


In this section we describe a method of choosing $N$ that is based on the Deviance Information Criterion (DIC) of \cite{spiegelhalter02}; see also \cite{spiegelhalter14} and \cite{gelman14}. Further discussion is given in Section \ref{section:simulation}.

By $M$ we denote the posterior mean of $s^2$; $\log L_n(M)$ is our notation for the log-likelihood evaluated at the posterior mean. Introduce the DIC measure of predictive accuracy,
\[
\widehat{\operatorname{elpd}}_{\textrm{DIC}}=\log L_n(M)-\nu_{\textrm{DIC}},
\]
where ``elpd'' is an abbreviation for ``expected log predictive density'' and 
\[
\nu_{\textrm{DIC}}=2 \left\{ \log L_n(M) - \expi ( \log L_n(s) | \mathcal{X}_n ) \right\}
\]
is the effective number of parameters. Straightforward but tedious calculations employing Lemma \ref{lem:post-ig} and properties of the (inverse) gamma distribution give that
\begin{align*}
\log L_n(M)&=-\frac{n}{2}\log(2\pi)-\frac{n}{2}\log\left(\frac{T}{n}\right)\\
& \quad -\frac{1}{2}\sum_{k=1}^N m_k \log\left( \frac{\beta+nZ_k/2}{\alpha+m_k/2-1} \right)-\frac{n}{2T}  \sum_{k=1}^N Z_k \frac{\alpha+m_k/2-1}{\beta+nZ_k/2},
\end{align*}
where $m_k$ denotes the number of observations in the bin $B_k$ (with a harmless abuse of notation) and $Z_k$ is as in Lemma \ref{lem:post-ig}. By similar calculations,
\[
\nu_{\textrm{DIC}}=\frac{n}{T}\sum_{k=1}^N \frac{Z_k}{\beta+nZ_k/2}-\sum_{k=1}^N m_k \left\{ \Psi\left(\alpha+\frac{m_k}{2}\right) - \log\left( \alpha+\frac{m_k}{2}-1 \right) \right\},
\]
where $\Psi$ is the digamma function. The formulae simplify even further, when $m_k=m,$ $k=1,\ldots,N.$

Now the idea consists in evaluating $\widehat{\operatorname{elpd}}_{\textrm{DIC}}$ for a range of values of $N,$ and choosing the one that maximises $\widehat{\operatorname{elpd}}_{\textrm{DIC}}$. This aims at optimising the predictive behaviour of the model. We note that using predictive performance criteria for Bayesian model selection is a well-established practice in the case of finite-dimensional, parametric models (see, e.g., \cite{gelman14}), but seems to be a new idea in the non-parametric Bayesian setting. DIC in some sense constitutes a Bayesian analogue of Akaike's AIC, and conceptually our proposal is similar to employing information criteria for smoothing parameter selection in the frequentist literature; see, e.g., the widely cited work \cite{hurvich98}. On the computational side, our DIC-based method is very simple to implement and does not require heavy computations. We test its practical performance in Section~\ref{section:simulation}.

\section{Bin number selection via marginal likelihood}
\label{sec:bayes:factor}

In this section we describe an alternative method of the bin number selection to the one we discussed in Section~\ref{sec:dic}. This is based on maximising the marginal likelihood
$
\int_{\mathcal{S}} L_n(s) \Pi_n(\dd s)
$
as a function of $N$, which can be viewed as model evidence given the data. Model selection based on the marginal likelihood (or Bayes factors) is well-established in Bayesian statistics. See, e.g., \cite{wang12} for an application to smoothing parameter selection in the context of smoothing spline regression, which is conceptually related to choosing the bin number $N$ in our problem. On a more general level, this is nothing else but an instance of a well-known empirical Bayes method (cf.~\cite{gelman}, Section~5.1).

Since we identify a piecewise constant diffusion coefficient $s^2$ with its coefficients $\theta_1,\ldots,\theta_N$, using a priori independence of $\theta_k$'s and Fubini's theorem, the marginal likelihood in our setting can be evaluated as (here $m_k$ was defined in Section~\ref{sec:dic} and is the number of observations in bin $B_k$)
\begin{multline*}
\operatorname{ML}_N(\mathcal{X}_n)=\prod_{k=1}^N \left\{ \int_{[0,\infty)} \left(\frac{2\pi}{n}\right)^{-m_k/2} \frac{\beta^{\alpha}}{\Gamma(\alpha)} \theta_k^{-(\alpha+m_k/2)-1} \exp\left(- \frac{1}{\theta_k}\left( \beta+\frac{nZ_k}{2} \right) \right) \dd \theta_k \right\} \\
\propto \frac{\beta^{\alpha N}}{\Gamma(\alpha)^N} \prod_{k=1}^N \frac{\Gamma(\alpha+m_k/2)}{(\beta+nZ_k/2)^{\alpha + m_k/2}}.
\end{multline*}
From a numerical point of view, rather than using analytic tools for optimisation, we recommend plotting the values of $\operatorname{ML}_N$ versus its argument $N$, and performing graphical maximisation. This results in a computationally simple model selection procedure and we apply it in practice in Section~\ref{section:simulation}.


\section{Simulated data examples}
\label{section:simulation}

In this section we use simulations of diffusion processes with known drift and diffusion coefficient to gain insight into the numerical performance of our method. We are particularly interested in both the practical consequence of using a pseudo-likelihood ignoring the drift and the empirical rate of posterior contraction attainable in  examples. 

In the first subsection we simulate realisations from the model for different dispersion and drift coefficients. Given subsamples of those realisations sampled at different rates we
compute the posterior distribution of the dispersion function using the piecewise constant prior (histogram-type prior) with varying number of bins. As an illustration, plots of  marginal posterior bands are compared with the true dispersion function.
The marginal posterior bands are obtained by computing $1-\alpha$ central posterior intervals (see \cite{gelman}, p.~33) separately for the coefficients $\theta_k$'s using Lemma~\ref{lem:post-ig}.

By Proposition~\ref{prop1}, assuming that there is no drift still leads to consistent Bayesian estimation of the dispersion coefficient,  even if the data are from a diffusion process with nonzero drift. This is illustrated by our simulation results.

In the second subsection we use Monte Carlo methods to determine the distribution of the distance between samples from the posterior distributions and the true dispersion coefficient. In Theorem~\ref{thm:rate-posterior-ig} we showed that the posterior contraction rate in the $L_2$-norm is optimal for estimation of H\"older smooth dispersion coefficients of order $0<\lambda\leq 1$ for the prior based on the inverse gamma distribution. The results of the Monte Carlo simulation agree with this. Furthermore, we numerically determine the rate of posterior convergence in the supremum norm for two examples. The simulation results in this case are less conclusive, but suggest that the posterior contraction rate in the $L_\infty$-norm in Theorem~\ref{cor:rate:b:sup} is possibly suboptimal.

Our analyses were done employing the programming language Julia, see \cite{bezanson17}.

\subsection{Influence of the drift}

For this numerical experiment we simulated sample paths of the diffusion $(X_t:t \in [0,1])$ where the true dispersion coefficient is given by one of
\begin{align*}
s_1(t) &= 3/2 + \sin(2(4t-2)) + 2\exp(-16(4t-2)^2),\\
s_2(t) &= W_t(\omega_0) + 1,
\intertext{
and the drift is given by one of  }
b_0(x) &= 0, \\
b_1(x) &= -10x+20.
\end{align*}
The function $s_1$ is a benchmark function used in \cite{fan1996} in the context of nonparametric regression, up to a vertical shift to ensure positivity. To define $s_2(t)$, we took a fixed realisation of a Wiener path starting in $1$, with $W_t(\omega_0)  > -1$ for $t \in [0,1]$ (sampled on an equidistant grid with 800\,001 points in $[0,1]$). 
The function $s_1$ is Lipschitz continuous, while  $s_2$ is H\"older continuous with coefficient essentially $\frac12$.

Specifically, we used the Euler scheme on a grid with 800\,001 equidistant points in the interval $[0,1]$ to obtain a single diffusion path for each combination of drift and dispersion coefficients given above, which then was subsampled to obtain $n=4\,000 \cdot 2^j +1$, $j \in \{1,2, 3\}$ observations each. As the prior on the coefficients on the individual bins we used independent $\operatorname{IG(0.1, 0.1)}$ distributions.

Figures \ref{fig:s2} and \ref{fig:s4} show marginal $98\,\%$ posterior bands for different combinations of bin number and observation regime for both dispersion coefficients when the drift is zero. Figure \ref{fig:s2drift} shows the marginal posterior bands for $s_1$ which are obtained if an affine drift term $b_1(x) = -10x+20$ is present, but neglected in the estimation procedure. Comparison with Figure \ref{fig:s2} shows that presence of a strong nonzero drift
hardly affects the obtained credible bands. Note that credible bands successfully recover the overall shape of the functions $s_i$, although the recovery is not too refined; however, it would be misleading to visually compare the results obtained in the SDE setting to e.g.\ those obtainable in  nonparametric regression, as the latter constitutes a much easier inferential problem. The functions $s_i$ do not always pass through all the credible intervals, which is not surprising given the fact that these intervals are marginal. In general, construction of uniform confidence bands in nonparametric statistics is a long-studied and difficult problem, see Section 5.7 in \cite{wasserman06}; for a general perspective on nonparametric uniform confidence bands see \cite{faraway16}. Less is known about frequentist performance of nonparametric Bayesian confidence sets, although some interesting results have already been obtained in recent years, see, e.g., \cite{nickl15}, \cite{szabo15a} and \cite{szabo15b}. We do not address this issue in detail in this paper, but note that posterior contraction at an optimal rate does not automatically imply `good' frequentist coverage properties of Bayesian credible sets. Following pp.~130--131 in \cite{wasserman06} in a similar study in the case of histograms employed as nonparametric probability density estimators, in our case it is arguably more natural to consider performance of Bayesian credible bands at the resolution of the histogram-type priors, i.e.\ for a histogramised version of a dispersion coefficient $s$, obtained as
\[
\bar{s}=\sum_{k=1}^{N_n} \bar{s}_{k} \ind_{B_k}
\]
for
\[
\bar{s}_{k} = \frac{1}{\Delta_k} \int_{ B_k } s(t)\dd t, \quad k=1,\ldots,N_n,
\]
where $\Delta_k$ denotes the length of the bin $B_k$. In a frequentist approach to nonparametric inference this constitutes an alternative to attempting to get rid of the bias of a nonparametric estimator for confidence band construction purposes via an artificial device like undersmoothing. Figure \ref{fig:s2m} gives a detail of Figure \ref{fig:s2} (panels for $N=40$ and $N=160$, $n = 16\,001$, zoomed-in to show $t\in[0.2,0.5]$, posterior credible bands only) with a histogramised $s_1$ superimposed. In comparison to Figure \ref{fig:s2}, the results appear to be visually even more pleasing, with the Bayesian credible band covering the curve $s_1$ in its entirety in the left panel. This suggests to give the number of bins $N_n$ an additional interpretation of a resolution at which one is interested in learning properties of the function $s.$ Obviously, this resolution cannot be made arbitrarily fine, as this would distort the frequentist consistency property of our nonparametric Bayesian procedure as expressed in our theoretical results from Section \ref{section:asymp}. We close this brief discussion on confidence bands by mentioning the fact that a number of authors have argued in favour of the so-called average coverage as a more natural concept of coverage of confidence bands than the uniform confidence bands, see Section 5.8 in \cite{wasserman06} and references therein.

Note that the recovery is somewhat less accurate for function $s_2$, see Figure \ref{fig:s4}, than for function $s_1,$ see Figure \ref{fig:s2}. This is in perfect agreement with our theoretical results from Section \ref{section:asymp}, that give a slower posterior contraction rate for less smooth functions.

Our posterior contraction theorems only specify that the optimal number of bins $N_n$ is proportional to $n^{-\beta},$ where the exponent $\beta$ depends on the smoothness $\lambda$ of a function to be estimated. This does not give a directly applicable recipe on how to choose the proportionality constant. In practice we recommend to use our theoretical results as guidance and to try out several choices of the number of bins, cf.\ Figures \ref{fig:s2}, \ref{fig:s2drift} and \ref{fig:s4}. This is not unlike the scale-space smoothing approach in the frequentist literature, see, e.g., Section 5.11 in \cite{wasserman06}. Furthermore, a useful point of reference in our setup is the number of non-overlapping neighbouring marginal posterior intervals. Figure \ref{fig:s2m} shows that if $N$ is too small to capture adequately the curvature of a dispersion coefficient, neighbouring marginal posterior credible intervals tend to be disjoint. On the other hand, choosing too many bins leads to undersmoothing and erratic appearance of marginal credible intervals.

Numerical experiments similar to the above were also performed for other benchmark functions given in \cite{fan1996}. As the results were similar, they are not reported here.

We also tested the performance of the procedure for choosing the number of bins as discussed in Section \ref{sec:dic}. We considered the case  with  dispersion coefficient  $s_1$, drift $b_1$ and $8001$ observations. This corresponds to the leftmost column of Figure \ref{fig:s2drift}. We computed $\widehat{\operatorname{elpd}}_{\textrm{DIC}}$ for $N\in \{5, 10, 20, 40, 80, 160, 320\}$. The results are in Figure  \ref{fig:elpd}, from which it is seen that the criterion is maximised for $N = 40$.  This corresponds to the bottomleft panel of Figure \ref{fig:s2drift}. In Figure   \ref{fig:ml} we plot the results obtained with the procedure for choosing the number of bins as discussed in Section \ref{sec:bayes:factor}. Also this procedure suggests $N=40$ as an optimal number of bins.

\subsection{Empirical contraction rates}
\label{sec:emp:rate}
Of particular interest is the empirical size of the $L_2$- and $L_\infty$-balls containing most of the posterior mass. To assess this, we approximate the distribution of the $L_2$- or $L_\infty$-distance between posterior samples and the truth by sampling from the posterior. We do this for four different realisations of the model denoted by $X(\omega_1), \dots, X(\omega_4)$. Note that $q$ being the $90\,\%$-quantile of this distribution entails that the $L_2$-ball respective  $L_\infty$-ball of size $q$ contains $90\,\%$ of the posterior mass. To be specific we employ the following steps four times for each $s_i$, $i = 1,2$ and both norms.
\begin{enumerate}
\item Simulate the diffusion $(X_t(\omega))_{t \in [0,1]}$ with dispersion coefficient $s_i$ (without drift) on a grid with 800\,001 points in the interval $[0,1]$.
\item Subsample to obtain $n =2500 \cdot 2^j +1$, $j \in \{1,2, 3,4,5\}$ observations each.
\item \label{step:bins} Draw $k = 1, \dots, 2000$ samples $S^{i,n}_k$ from the posterior using $\log(N_n) = \log 5 + \frac{1}{2\lambda_i+1}\log(n)$ bins and determine the distance $\|S^{i,n} - s_i\|_2$ (respective $\log(N_n) = \log 5 + \frac{1}{2\lambda_i+1}\log(n/\log(n))$ bins for $\|S^{i,n} - s_i\|_\infty$).
\item Determine the $90\%$ quantile $q^{i,n}(\omega)$ of the distance samples and plot as function of $n$.
\end{enumerate}

Figure \ref{fig:contraction-l2}  shows the $90\%$ quantile $q^{i}(n)$ on a $\log$-$\log$ scale for the $L_2$-norm for $i = 1, 2$. The empirical findings for function $s_1$ agree very well with the exponent $\frac13 = \frac{\lambda}{ 2\lambda+1}$ for $\lambda = 1$ obtained in Theorem~\ref{thm:rate-posterior-ig}.  The function $s_2$ is $\lambda$-H\"older smooth for any $\lambda < \frac12$. The empirically determined exponent is in excellent agreement with the exponent $\frac14 = \frac{\lambda}{2\lambda+1}$ for $\lambda = \frac12$ obtained in Theorem~\ref{thm:rate-posterior-ig}.

Figure~\ref{fig:contraction-sup} shows the $90\%$ quantile $q^{i}(n)$ for the $L_\infty$-norm. The number of bins $N_n$ was chosen in analogy to nonparametric kernel regression, see Theorem 1.8 in \cite{tsyb},  as $\log(N_n) = \log 5 + \frac{1}{2\lambda_i+1}\log(n/\log(n))$. Here the results are less conclusive than in the case of the $L_2$-norm. The empirical findings suggest  the rate $(n/\log n)^\frac13$ or similar for $s_1$, and the rate $(n/\log n)^\frac14$ or similar for $s_2$.

\newcommand{\pd}{\hspace{1cm}} 
\renewcommand{\arraystretch}{0}
\newcommand{\loc}{./}
\begin{figure}
\rotatebox{90}{
\begin{tabular}{cccc}
& $N= 40$ & $N = 80$ & $N = 160$\\
\rotatebox[origin=rt]{-90}{$n=8001$\pd}&
\includegraphics[height=0.25\textwidth]{\loc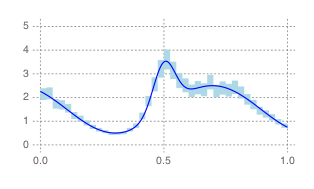} & 
\includegraphics[height=0.25\textwidth]{\loc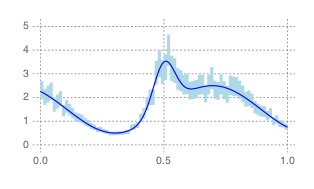} &
\includegraphics[height=0.25\textwidth]{\loc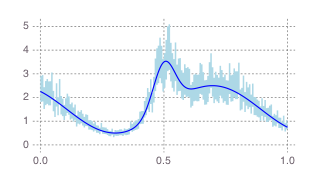} \\
\rotatebox[origin=rt]{-90}{$n=16\,001$\pd}&
\includegraphics[height=0.25\textwidth]{\loc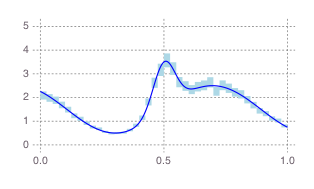}&
\includegraphics[height=0.25\textwidth]{\loc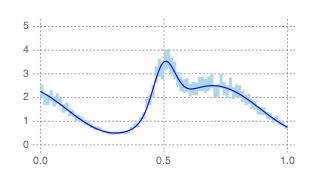}&
\includegraphics[height=0.25\textwidth]{\loc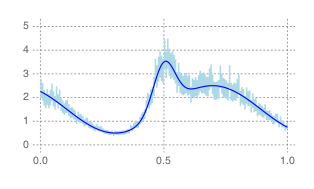}\\
\rotatebox[origin=rt]{-90}{$n=32\,001$\pd}&
\includegraphics[height=0.25\textwidth]{\loc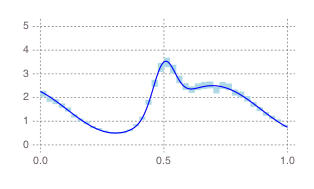}&
\includegraphics[height=0.25\textwidth]{\loc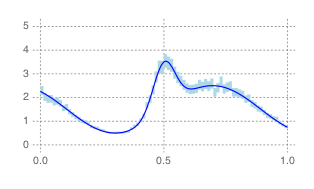}&
\includegraphics[height=0.25\textwidth]{\loc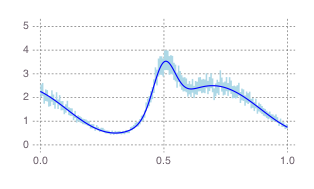}
\end{tabular}
}
\caption{Estimation results for $s_1$ with varying number of observations and bins, no drift.
The solid curve is the true dispersion coefficient while the light blue areas are $98\,\%$ marginal posterior bands. $n$ is the number of observations, $N$ is the number of bins.
}
\label{fig:s2}
\end{figure}	

\begin{figure}
\rotatebox{90}{
\begin{tabular}{cccc}
& $N= 40$ & $N = 80$ & $N = 160$\\
\rotatebox[origin=rt]{-90}{$n=8001$\pd}&
\includegraphics[height=0.25\textwidth]{\loc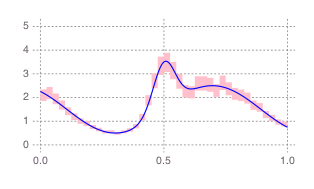} & 
\includegraphics[height=0.25\textwidth]{\loc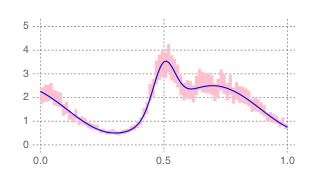} &
\includegraphics[height=0.25\textwidth]{\loc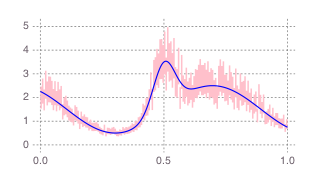} \\
\rotatebox[origin=rt]{-90}{$n=16\,001$\pd}&
\includegraphics[height=0.25\textwidth]{\loc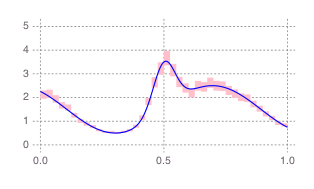}&
\includegraphics[height=0.25\textwidth]{\loc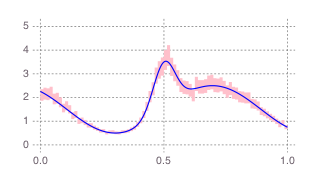}&
\includegraphics[height=0.25\textwidth]{\loc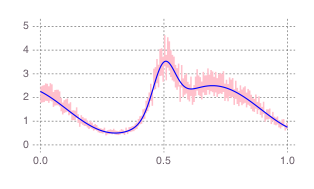}\\
\rotatebox[origin=rt]{-90}{$n=32\,001$\pd}&
\includegraphics[height=0.25\textwidth]{\loc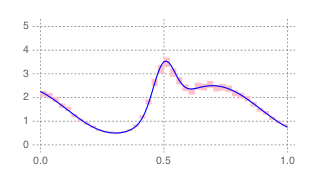}&
\includegraphics[height=0.25\textwidth]{\loc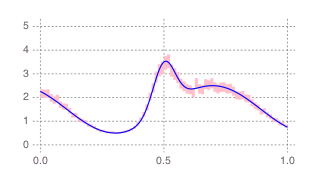}&
\includegraphics[height=0.25\textwidth]{\loc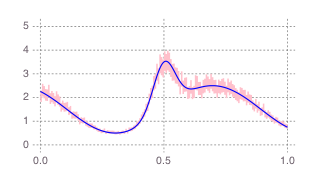}
\end{tabular}
}
\caption{Estimation for $s_1$ with varying number of observations and bins and drift $b_1(x) =-10x + 20$.
The solid curve is the true dispersion coefficient while the light red areas are $98\,\%$ marginal posterior bands. $n$ is the number of observations, $N$ is the number of bins.
}
\label{fig:s2drift}
\end{figure}	

\begin{figure}
\rotatebox{90}{
\begin{tabular}{cccc}
& $N= 40$ & $N = 80$ & $N = 160$\\
\rotatebox[origin=rt]{-90}{$n=8001$\pd}&
\includegraphics[height=0.25\textwidth]{\loc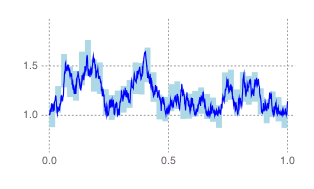} & 
\includegraphics[height=0.25\textwidth]{\loc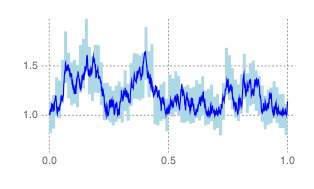} &
\includegraphics[height=0.25\textwidth]{\loc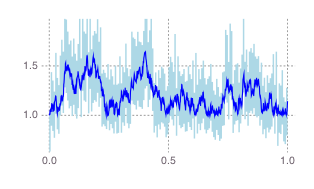} \\
\rotatebox[origin=rt]{-90}{$n=16\,001$\pd}&
\includegraphics[height=0.25\textwidth]{\loc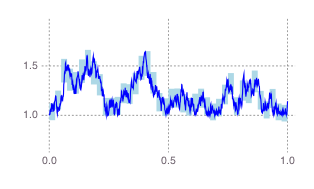}&
\includegraphics[height=0.25\textwidth]{\loc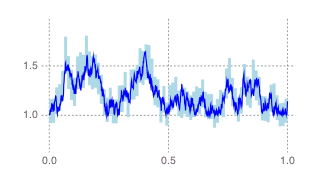}&
\includegraphics[height=0.25\textwidth]{\loc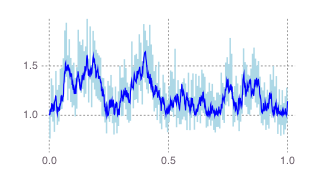}\\
\rotatebox[origin=rt]{-90}{$n=32\,001$\pd}&
\includegraphics[height=0.25\textwidth]{\loc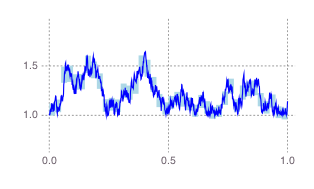}&
\includegraphics[height=0.25\textwidth]{\loc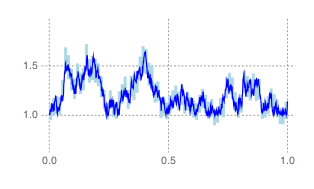}&
\includegraphics[height=0.25\textwidth]{\loc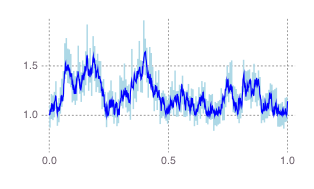}
\end{tabular}
}
\caption{Estimation results for $s_2$ with varying number of observations and bins, no drift.
The solid curve is the true dispersion coefficient while the light blue areas are 98\,\% marginal posterior bands. $n$ is the number of observations, $N$ is the number of bins.
}
\label{fig:s4}
\end{figure}	

\begin{figure}
{
\begin{tabular}{ccc}
 $N = 40$ & $N = 160$\\
\includegraphics[width=0.45\textwidth]{\loc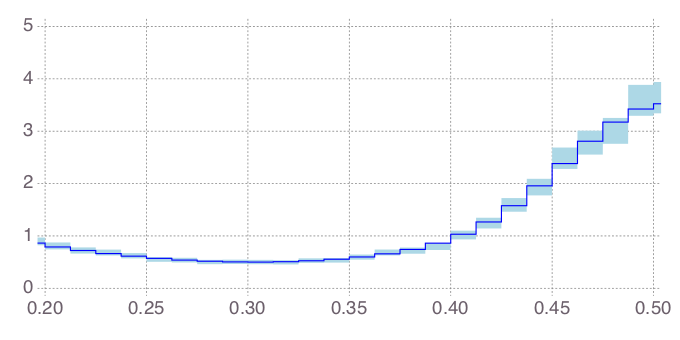}&
\includegraphics[width=0.45\textwidth]{\loc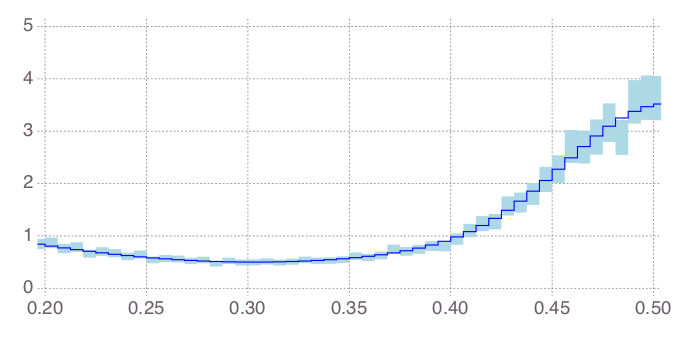}\\
\end{tabular}
}
\caption{Detail of Figure \ref{fig:s2}, panels for $N=40$ and $N=160$, $n = 16\,001$, zoomed-in to show $t\in[0.2,0.5]$, posterior credible bands only, with a histogramised $s_1$ superimposed.}
\label{fig:s2m}
\end{figure}	

\begin{figure}[htbp]
\begin{center}
\includegraphics[width=0.9\textwidth]{\loc 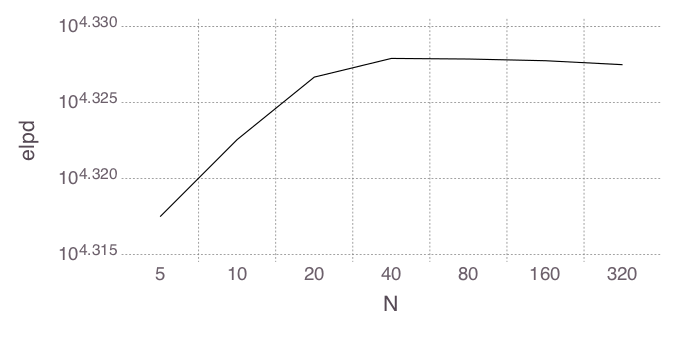}
\caption{A log plot of the $\widehat{\operatorname{elpd}}_{\textrm{DIC}}$-values for estimating the dispersion coefficient $s_1$ versus different bin numbers $N$, $n$ fixed at $8001$, with drift $b_1$.
 }
\label{fig:elpd}
\end{center}
\end{figure}

\begin{figure}[htbp]
\begin{center}
\includegraphics[width=0.9\textwidth]{\loc 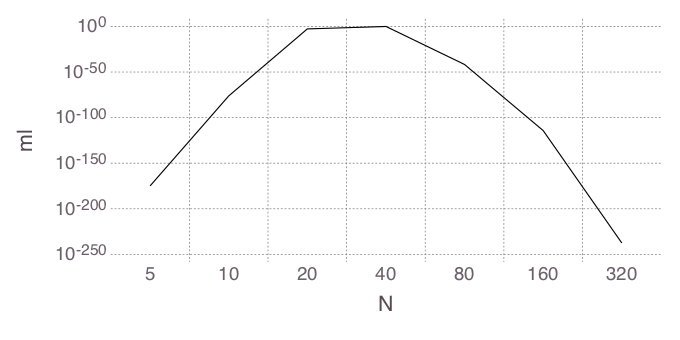}
\caption{A log plot of suitably scaled and shifted $\operatorname{ML}$-values for estimating the dispersion coefficient $s_1$ versus different bin numbers $N$, $n$ fixed at $8001$, with drift $b_1$.
 }
\label{fig:ml}
\end{center}
\end{figure}

\begin{figure}
\includegraphics[width=0.9\textwidth]{\loc 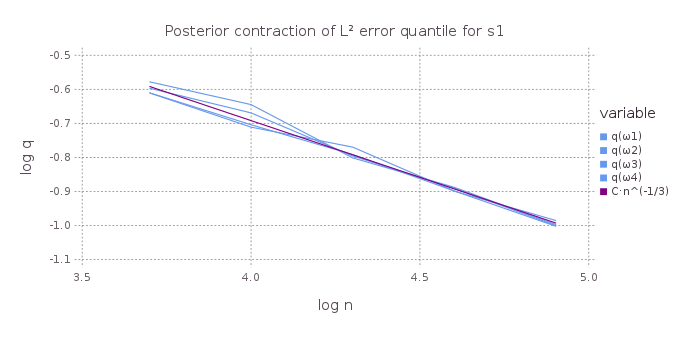}
\includegraphics[width=0.9\textwidth]{\loc 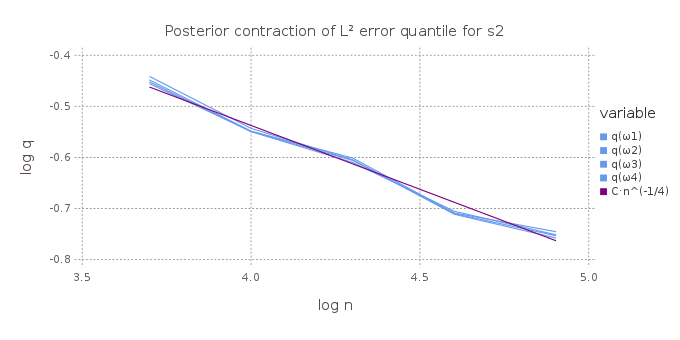}
\caption{Estimate of the contraction rate of the size of $L_2$-balls around $s$ covering 90 $\%$ of posterior probability mass by sampling (blue); compared with rate (purple). Top: Lipschitz continuous case, example $s_1$. Bottom: H\"older coefficient essentially $\frac12$, example $s_2$.}
\label{fig:contraction-l2}
\end{figure}

\begin{figure}
\includegraphics[width=0.9\textwidth]{\loc 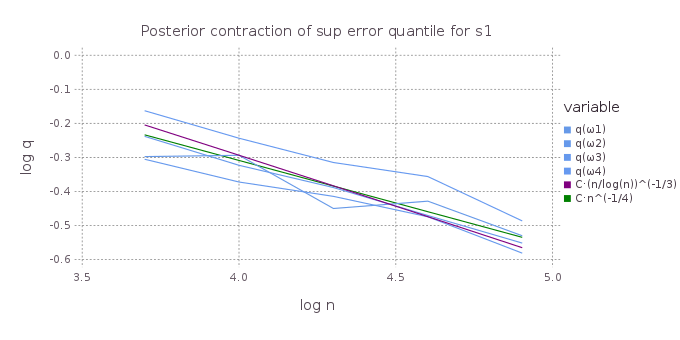}
\includegraphics[width=0.9\textwidth]{\loc 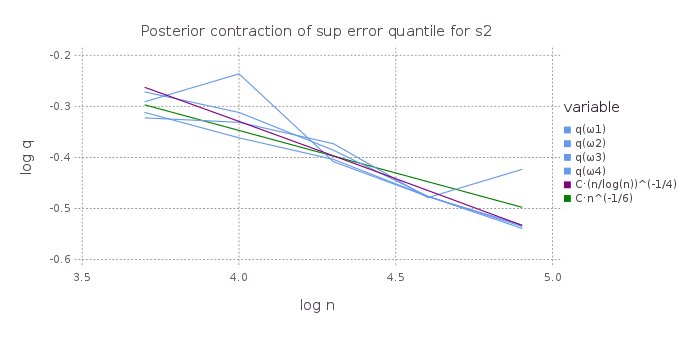}
\caption{Estimate of the contraction rate of the size of $L_\infty$-balls around $s$ covering 90 $\%$ of posterior probability mass by sampling (blue); compared with likely rate (purple) and rate of Theorem~\ref{cor:rate:b:sup} (green). Top: Lipschitz continuous case, example $s_1$. Bottom: H\"older coefficient essentially $\frac12$, example $s_2$.}
\label{fig:contraction-sup}
\end{figure}

\section{Real data examples}\label{section:realdata}
In this section we apply our Bayesian method on two real data examples and study its implications. The daily exchange rates (noon buying rates in New York City for cable transfers payable in foreign currencies)
 JPY/USD and USD/GBP from January 1, 1999, to March 20, 2010
are available as data sets DEXJPUS and DEXUSUK from \cite{fred}.
We visualise the data in Figure~\ref{fig:exchangedata} (time in this and subsequent figures corresponds to the physical time, with unit being a year).

These exchange rate time series were considered e.g.\ in \cite{hamrick2011}. 
Based on discrete time observations assumed to have arisen from the solution of an SDE
\begin{equation}
\label{ham}
\dd X_t=b(X_t)\,\dd t+\sigma(X_t)\,\dd W_t, \quad X_0=x, \quad t\in[0,T],
\end{equation}
with space-dependent dispersion coefficient $\sigma,$ \cite{hamrick2011} proposed a maximum penalised quasi-likelihood method to estimate nonparametrically the diffusion coefficient $\sigma^2.$

Plots of both series appear to indicate that the data are nonstationary, and this is confirmed also by the outcomes of the augmented Dickey-Fuller test we performed using {\tt urca} package in {\bf R}, see \cite{rcore17} and \cite{urca}. The test constitutes a standard unit root test in time series analysis, see, e.g., Chapter 17 in \cite{hamilton} and Section 5.3 in \cite{aragon} for additional information. A similar conclusion on nonstationarity is reached in \cite{hamrick2009}.
As stationarity is no prerequisite for application of our nonparametric Bayesian method, we do not pursue this question any further in the paper. We retrieved the estimates $\hat \sigma$ given in \cite{hamrick2011} from the figures published in the digital version of the publication using WebPlotDigitizer, see \cite{rohatgi}, and next used them to calculate the induced estimates $t \mapsto \hat s(t) = \hat\sigma(X_t)$ of the historical volatility at time $t$. In Figures \ref{fig:JPUS} and \ref{fig:USUK} we contrast those induced estimates $\hat s$ with $90\%$ marginal posterior bands for the deterministic dispersion coefficient $s_0$ that were obtained through our Bayesian procedure. Since nominal exchange rates vary widely with the denomination used, we employed a non-informative $\operatorname{IG(0.001, 0.001)}$ prior for coefficients $\theta_k$'s from Lemma \ref{lem:post-ig}. Our estimation results in Figures \ref{fig:JPUS} and \ref{fig:USUK} show that the volatility was high in the final years of  the decade 2000--2010, coinciding with the sub-prime mortgage crisis and the following recession. On the other hand, the model from \cite{hamrick2011} does not appear to capture this fact. It is reassuring to see that our method recovers this relevant event from the data. Based on this observation, we believe that including time-dependence of the volatility into the model is more appropriate in this example.

\begin{figure}
\includegraphics[width=0.95\textwidth]{\loc 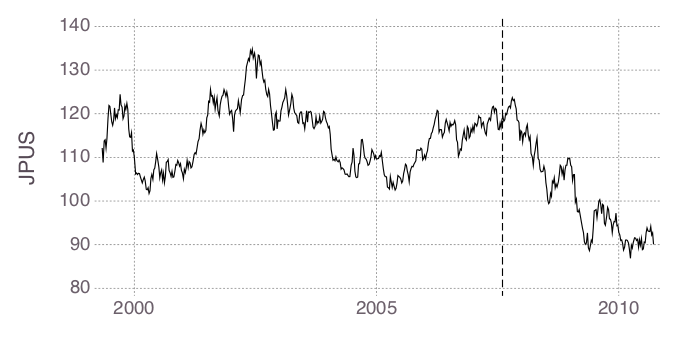}\\
\includegraphics[width=0.95\textwidth]{\loc 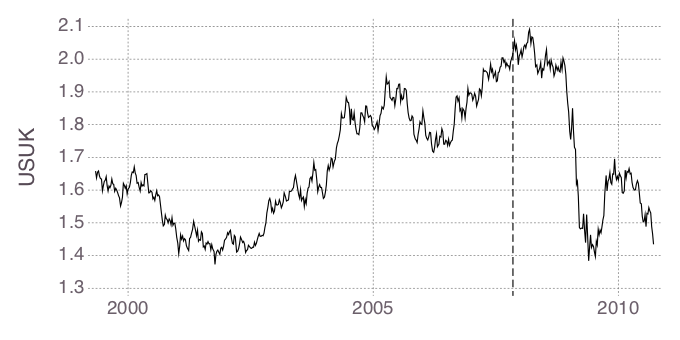}
\caption{Daily exchange rates JPY/USD (top) and USD/GBP (bottom) from January 1, 1999 to March 20, 2010.
Change-point estimates  are indicated with dashed vertical lines. 
}
\label{fig:exchangedata}
\end{figure}

\begin{figure}
\includegraphics[width=0.9\textwidth]{\loc 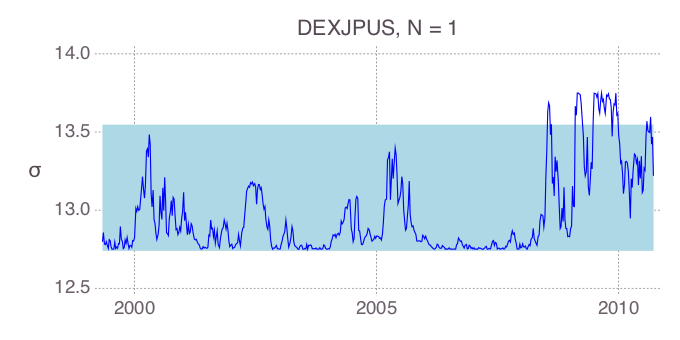}
\includegraphics[width=0.9\textwidth]{\loc 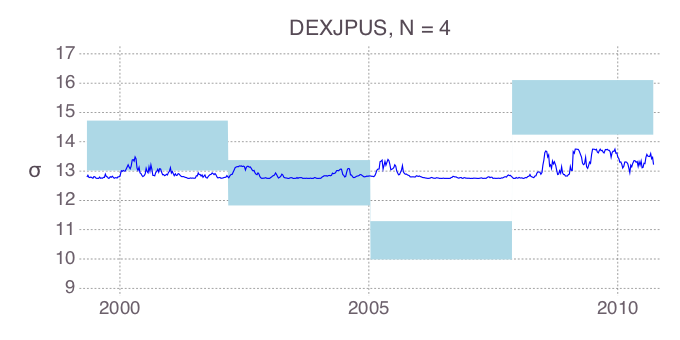}
\includegraphics[width=0.9\textwidth]{\loc 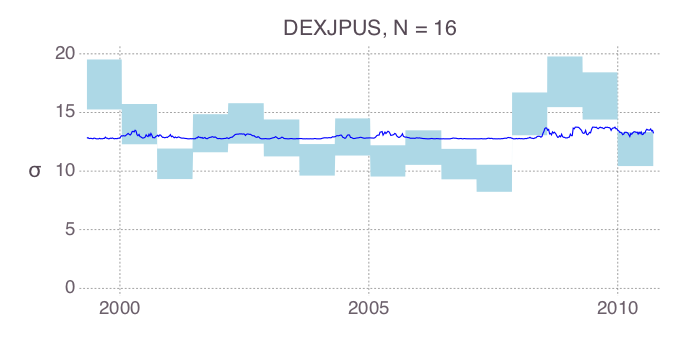}
\caption{$90\%$ marginal posterior band for the volatility of DEXJPUS. Plot of $t \mapsto \hat \sigma(X_t)$ as induced by the estimates in \protect\cite{hamrick2011}. 
}
\label{fig:JPUS}
\end{figure}

\begin{figure}
\includegraphics[width=0.9\textwidth]{\loc 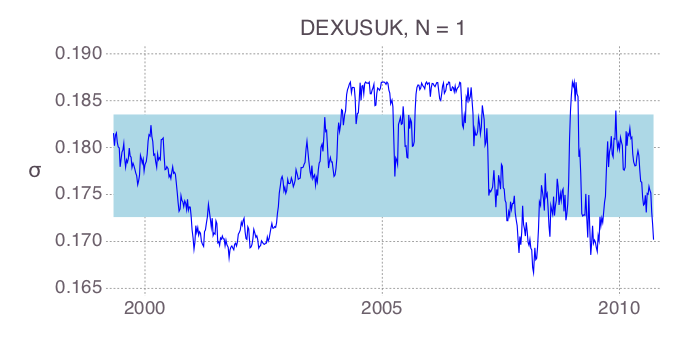}
\includegraphics[width=0.9\textwidth]{\loc 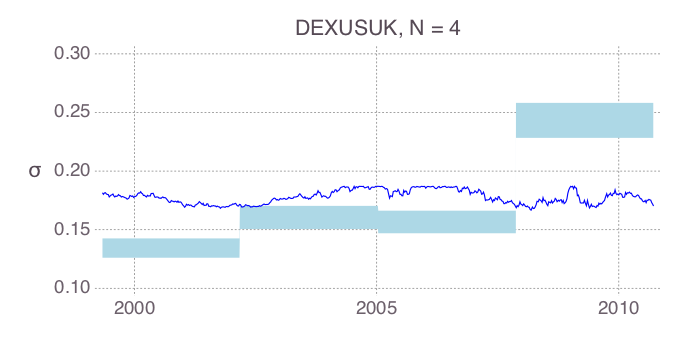}
\includegraphics[width=0.9\textwidth]{\loc 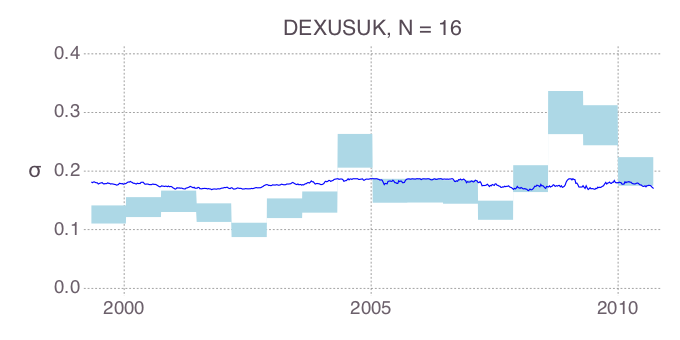}
\caption{$90\%$ marginal posterior band for volatility of DEXUSUK. Plot of $t \mapsto \hat \sigma(X_t)$. 
}
\label{fig:USUK}
\end{figure}

A further confirmation of our findings comes from the change-point analysis of the data. For both the DEXJPUS and DEXUSUK series, a simple estimator for detection of a change-point in diffusivity of an SDE, see \cite{degregorio08} and Section 4.3.1 in \cite{iacus}, that is implemented in {\bf R} in the {\tt sde} package (see \cite{sde}), yields a change-point in the volatility level before and after 2007, with volatility prior to 2007 being lower. This is in excellent agreement with findings using our Bayesian method. See Figure \ref{fig:exchangedata} below, where we plot the exchange rate data together with change point estimates, and compare to Figures \ref{fig:JPUS} and \ref{fig:USUK}. We note, however, that our Bayesian approach yields more, in that the method from \cite{degregorio08} assumes that the volatility is constant before and after the change-point, which is not what our Bayesian marginal credible sets suggest.

\section{Proofs}
\label{section:proofsgeneral}

\begin{proof}[Proof of Proposition~\ref{prop1}.]
By Theorem 6.10 in \cite{hopfner} and our Assumption \ref{standing}~(b)--(c), the laws $\pp$ and $\mathbb{P}_{0,s_0}$ of the path $(X_t:t\in[0,1])$ are equivalent, the result that ultimately relies upon Girsanov's theorem. 
Let $Z=\frac{\dd\pp}{\dd\mathbb{P}_{0,s_0}}$, and let $Z^n=\frac{\dd\mathbb{P}^n_{b_0,s_0}}{\dd\mathbb{P}^n_{0,s_0}}$ be the density of the respective laws of $\mathcal{X}_n$. By Theorem 2 on p.~245 in~\cite{skorohod64} and Corollary 2 on p.~246 there, $Z^n=\mathbb{E}_{{0,s_0}}(Z\mid \mathcal{X}_n ).$ 
Then convergence of $\Pi_n(U_{s_0,\varepsilon_n}^c\mid \mathcal{X}_n)$ in $\mathbb{P}_{0,s_0}$-probability to zero implies that it also converges to zero in $\pp$-probability. Indeed, fix $\eta>0$, let $A^n=\{\Pi_n(U_{s_0,\varepsilon_n}^c\mid \mathcal{X}_n)>\eta\}$ and let $\eps>0$. Then 
\[\pp(A_n)=\mathbb{E}_{{0,s_0}}[Z^n\mathbf{1}_{A^n}]=\mathbb{E}_{{0,s_0}}[\mathbb{E}_{{0,s_0}}(Z\mathbf{1}_{A^n}\mid  \mathcal{X}_n)]=\mathbb{E}_{{0,s_0}}[Z\mathbf{1}_{A^n}].\] Choose $\delta>0$ such that $\mathbb{P}_{0,s_0}(A)<\delta$ for any event $A$ implies $\pp(A)=\mathbb{E}_{{0,s_0}}[Z\mathbf{1}_{A}]<\eps$, possible in view of Lemma~13.1 in \cite{williams}. As eventually $\mathbb{P}_{0,s_0}(A^n)<\delta$, we have $\pp(A_n)<\eps$.
\end{proof}

\begin{lemma}\label{thm:rate}
Define 
\[ 
M(x):=\expi (s^2(x) \mid  \scr{X}_n),
\] 
the posterior mean of $s^2(x)$. Assume $m_n\asymp n^{2\lambda/(2\lambda+1)}$ (equivalently, $N_n \asymp n^{1/(2\lambda+1)}$).  If we let  $ \eps_n\asymp m_n^{-1/2}$, i.e.\ $\eps_n\asymp n^{-\lambda/(2\lambda+1)}$, then for any sequence $h_n$ tending to infinity we have for any fixed $x\in [0,1]$
\[ \ppz\left(|M(x)-s_0^2(x)| \ge \eps_n h_n\right)  \to 0 \]
as $n\to \infty.$
\end{lemma}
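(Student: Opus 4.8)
The plan is an explicit Bayesian bias--variance analysis of the posterior mean, which by Lemma~\ref{lem:post-ig} is available in closed form. Under $\ppz$ the solution of \eqref{sde} is simply $X_t=\int_0^t s_0(u)\,\dd W_u$, so the increments $Y_{1,n},\dots,Y_{n,n}$ are \emph{independent} with $Y_{i,n}\sim N(0,\sigma_{i,n}^2)$, where $\sigma_{i,n}^2=\int_{t_{i-1,n}}^{t_{i,n}}s_0^2(u)\,\dd u\asymp 1/n$ (using that $s_0$ is bounded above and bounded away from zero). Fix $x$ and let $B_k$ be the bin containing it; I treat first the case $k<N$. By Lemma~\ref{lem:post-ig} and the formula $\beta'/(\alpha'-1)$ for the mean of $\mathrm{IG}(\alpha',\beta')$,
\[
M(x)=\expi(\th_k\mid\mathcal{X}_n)=\frac{\beta+nZ_k/2}{\alpha+m/2-1},\qquad Z_k=\sum_{i=(k-1)m+1}^{km}Y_{i,n}^2 .
\]
I then decompose
\[
M(x)-s_0^2(x)=\bigl(M(x)-\exz M(x)\bigr)+\bigl(\exz M(x)-s_0^2(x)\bigr),
\]
a ``variance'' term plus a ``bias'' term, to be estimated separately; this is the Bayesian bias--variance decomposition alluded to in the discussion after Theorem~\ref{thm:rate-posterior-ig}.

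For the bias, $\exz Z_k=\sum_i\sigma_{i,n}^2=\int_{B_k}s_0^2(u)\,\dd u=(m/n)\,\bar s_k^2$ with $\bar s_k^2:=(n/m)\int_{B_k}s_0^2$, so
\[
\exz M(x)-s_0^2(x)=\Bigl(\frac{\beta+(m/2)\bar s_k^2}{\alpha+m/2-1}-\bar s_k^2\Bigr)+\bigl(\bar s_k^2-s_0^2(x)\bigr).
\]
The first bracket equals $(\beta-(\alpha-1)\bar s_k^2)/(\alpha+m/2-1)=O(1/m)$ (here $s_0^2$, hence $\bar s_k^2$, is bounded), and the second is $O((m/n)^\lambda)$ because $s_0^2$ is $\lambda$-H\"older, being the product of the bounded $\lambda$-H\"older function $s_0$ with itself, and $|B_k|=m/n$. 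With $m_n\asymp n^{2\lambda/(2\lambda+1)}$ one has $(m_n/n)^\lambda\asymp n^{-\lambda/(2\lambda+1)}\asymp\eps_n$ while $1/m_n=o(\eps_n)$, so the bias is $O(\eps_n)$; since $h_n\to\infty$ it is eventually at most $\tfrac12\eps_n h_n$.

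For the variance term, independence of the $Y_{i,n}$ together with $\varz(Y_{i,n}^2)=2\sigma_{i,n}^4\asymp n^{-2}$ gives $\varz(Z_k)=\sum_i 2\sigma_{i,n}^4\asymp m/n^2$, hence
\[
\varz(M(x))=\frac{(n/2)^2\,\varz(Z_k)}{(\alpha+m/2-1)^2}\asymp\frac{1}{m}=\eps_n^2 ,
\]
and Chebyshev's inequality yields $\ppz\bigl(|M(x)-\exz M(x)|\ge\tfrac12\eps_n h_n\bigr)\lesssim \varz(M(x))/(\eps_n h_n)^2\asymp h_n^{-2}\to 0$. Combining this with the bias bound proves the claim for $x$ in an interior bin. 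For $x\in B_N$ the same computation applies with $m+r<2m$ observations and $|B_N|<2m/n$, which only affects the implied constants.

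I expect the argument to be essentially routine; the points that need care are (i) the use of independence of the $Y_{i,n}$ under $\ppz$, which is what makes the fourth-moment computation $\varz(Z_k)\asymp m/n^2$ legitimate, and (ii) the bookkeeping in the bias, namely separating the $O(1/m)$ ``prior pull'' from the $O((m/n)^\lambda)$ smoothing bias and verifying that $m_n\asymp n^{2\lambda/(2\lambda+1)}$ is precisely the choice balancing the squared bias $(m/n)^{2\lambda}$ against the variance $1/m$, both then being of order $\eps_n^2$.
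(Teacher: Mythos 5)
Your proof is correct and takes essentially the same route as the paper: an explicit bias--variance decomposition of the closed-form posterior mean from Lemma~\ref{lem:post-ig}, with bias $O(1/m)+O((m/n)^\lambda)$, variance $O(1/m)$, balanced at $m_n\asymp n^{2\lambda/(2\lambda+1)}$, and Chebyshev to conclude. The only cosmetic difference is that you center $M(x)$ and apply Chebyshev to the fluctuation term after showing the bias is eventually below $\tfrac12\eps_n h_n$, whereas the paper bounds the full mean squared error and then applies Chebyshev once; the separation of the $O(1/m)$ prior-pull term from the $O((m/n)^\lambda)$ smoothing bias is the same, just organized a little more explicitly in your version.
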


\begin{proof}
Assume $x\in B_k$ for some  $k<N$ (note that then $k=\lfloor \frac{nx}{m}\rfloor+1$). The case $x\in B_N$ follows later on. We compute
\begin{align*} M(x) &=\expi(\th_k \mid \scr{X}_n) = \frac{\beta+\frac{n}{2}\sum_{i=(k-1)m+1}^{km}Y_{i,n}^2}{\alpha+m/2-1}\\ &=\frac{2\beta}{2(\alpha-1)+m}+\frac{n\sum_{i=(k-1)m+1}^{km}Y_{i,n}^2}{2(\alpha-1)+m}.\end{align*} 
Note that
\[
\exz \left[ \sum_{i=(k-1)m+1}^{km}Y_{i,n}^2\right]=\int_{t_{m(k-1)}}^{t_{mk}}s_0^2(u)\,\dd u.
\]
Hence,
\[
b(x):=\exz [M(x)]-s_0^2(x)=\frac{2\beta}{2(\alpha-1)+m}+\frac{n\int_{t_{m(k-1)}}^{t_{mk}}s_0^2(u)\,\dd u}{2(\alpha-1)+m}-s_0^2(x).
\]
We consider
\begin{align*}
\frac{n\int_{t_{m(k-1)}}^{t_{mk}}s_0^2(u)\,\dd u}{2(\alpha-1)+m}-s_0^2(x) 
& =\frac{n\int_{t_{m(k-1)}}^{t_{mk}}s_0^2(u)\,\dd u}{2(\alpha-1)+m}-\frac{n}{m}\int_{t_{m(k-1)}}^{t_{mk}}s_0^2(x)\,\dd u 
\end{align*}
\[
= \frac{n}{m+2(\alpha-1)}\int_{t_{m(k-1)}}^{t_{mk}}(s_0^2(u)-s_0^2(x))\,\dd u -\frac{2(\alpha-1)s_0^2(x)}{m+2(\alpha-1)}.
\]
As $s_0$ is bounded from above by some constant $\mathcal{K}>0$, the last term is of order $\frac{1}{m}$. We continue with the integral expression. By H\"older continuity of $s_0$, it follows that  $|s_0^2(u)-s_0^2(v)|\leq 2\mathcal{K}L|u-v|^\lambda$.
Using the sharp bound (attained at $x=t_{m(k-1)}$ and $x=t_{mk}$)
\[
\int_{t_{m(k-1)}}^{t_{mk}}|u-x|^\lambda\,\dd u \leq \frac{1}{\lambda+1}\left(\frac{m}{n}\right)^{\lambda+1},
\]
we get the order bound (uniformly in $x \in B_k$)
\[
|b(x)|\leq O\left(\frac{1}{m}\right)+O\left(\frac{n}{m}\left(\frac{m}{n}\right)^{\lambda+1}\right)=O\left(\frac{1}{m}\right)+O\left(\frac{m}{n}\right)^{\lambda}.
\]
For the variance we obtain (using that $Y_{i,n}$ and $Y_{j,n}$ are independent for $i\neq j$)
\begin{align*} \varz [M(x)] &= \left(\frac{n}{2(\alpha-1)+m}\right)^2 \sum_{i=(k-1)m+1}^{km} \varz [Y_{i,n}^2]	\\ & =
\left(\frac{n}{2(\alpha-1)+m}\right)^2 2\sum_{i=(k-1)m+1}^{km}\left(\int_{t_{i-1}}^{t_i} s_0^2(u) d u \right)^2 \\&\le C_\alpha \scr{K}^4\frac{1}{m} = O\left(\frac1{m}\right),
\end{align*}
for a positive constant $C_\alpha$, uniformly for $x\in B_k$. 
Balancing bias and standard deviation yields the order equality
\[
O\left(\frac{1}{m}\right)+O\left(\frac{m}{n}\right)^{\lambda}=O\left(\frac{1}{\sqrt{m}}\right),
\]
or
\[
O(1)+O\left(\frac{m^{\lambda+1}}{n^{\lambda}}\right)=O(\sqrt{m}),
\]
which gives $m^{\lambda+1/2}\asymp n^{\lambda}$, so that
\begin{equation}\label{eq:N-m}
m\asymp n^{\frac{2\lambda}{2\lambda+1}}.
\end{equation}

A similar analysis holds for $x\in B_N$. We highlight the main steps for this case. For instance, we now get 
\[
M(x) =\expi(\th_N | \scr{X}_n) =\frac{2\beta}{2(\alpha-1)+m+r}+\frac{nZ_N}{2(\alpha-1)+m+r},
\]
and
\[
\exz [Z_N]=\int_{t_{m(N-1)}}^1s_0^2(u)\,\dd u.
\]
This results in the bias
\[
b(x)=O\left(\frac{1}{m}\right) + O\left(\frac{m+r}{n}\right)^{\lambda}=O\left(\frac{1}{m}\right) + O\left(\frac{m}{n}\right)^{\lambda},
\]
as $0\leq r<m$.
Similarly, we get
\[
\varz [M(x)] =  O\left(\frac1{m+r}\right),
\]
which is, using again $0\leq r<m$, of order $O\left(\frac1{m}\right)$.
Hence balancing gives the same order relation $m\asymp n^{\frac{2\lambda}{2\lambda+1}}$ as for the case $k<N$ in \eqref{eq:N-m}.

With the above choice for $m$ we obtain, for any $x\in [0,1]$, for the mean squared error that
\[  \exz  [\left(M(x) -s_0^2(x)\right)^2]  \asymp n^{-\frac{2\la}{2\la+1}}. \]
Hence, upon taking 
$
\eps_n\asymp n^{-\frac{\lambda}{2\lambda+1}}, 
$ the result follows from  Chebyshev's inequality:
\[ \ppz\left(|M(x)-s_0^2(x))| \ge \eps_n h_n\right) \le \frac{ \exz [\left(M(x) -s_0^2(x)\right)^2]  }{\eps_n^2 h_n^2} \lesssim \frac1{h_n^2} \to 0\]
as $n\to \infty.$
\end{proof}

\begin{cor}\label{cor:rate}
Under the assumptions of Lemma \ref{thm:rate} we  have
\[ \exz [ \|M-s_0^2\|_2^2] \asymp \eps_n^2, \]
where $\|\cdot\|_2$ is the $L_2$-norm on $[0,1]$. 
\end{cor}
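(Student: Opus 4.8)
The plan is to deduce the $L_2$ statement directly from the pointwise mean-squared-error bound established in the proof of Lemma~\ref{thm:rate}. Since $M$ is piecewise constant on the bins $B_1,\dots,B_{N_n}$ and $s_0^2$ is continuous, the map $(x,\omega)\mapsto (M(x)-s_0^2(x))^2$ is jointly measurable and nonnegative, so Tonelli's theorem gives
\[
\exz[\|M-s_0^2\|_2^2]=\int_0^1 \exz\big[(M(x)-s_0^2(x))^2\big]\,\dd x.
\]
As $[0,1]$ has unit Lebesgue measure, it therefore suffices to prove that $\exz[(M(x)-s_0^2(x))^2]\asymp\eps_n^2$ \emph{uniformly} in $x\in[0,1]$, and then integrate this two-sided bound.

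For the upper bound I would reuse the decomposition $\exz[(M(x)-s_0^2(x))^2]=b(x)^2+\varz[M(x)]$ together with the estimates $|b(x)|=O(1/m)+O((m/n)^\lambda)$ and $\varz[M(x)]=O(1/m)$ from the proof of Lemma~\ref{thm:rate}, which are valid uniformly over each bin (the slightly longer boundary bin $B_N$ being treated by the same argument, using $0\le r<m$). With $m\asymp n^{2\lambda/(2\lambda+1)}$ this gives $\exz[(M(x)-s_0^2(x))^2]\lesssim 1/m+(m/n)^{2\lambda}\asymp n^{-2\lambda/(2\lambda+1)}=\eps_n^2$, hence $\exz[\|M-s_0^2\|_2^2]\lesssim\eps_n^2$. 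For the matching lower bound I would discard the nonnegative squared bias and bound from below by the variance; using that $s_0\ge\kappa>0$, for $x\in B_k$ with $k<N$,
\[
\varz[M(x)]=\left(\frac{n}{2(\alpha-1)+m}\right)^2 2\sum_{i=(k-1)m+1}^{km}\left(\int_{t_{i-1}}^{t_i}s_0^2(u)\,\dd u\right)^2\ge \left(\frac{n}{2(\alpha-1)+m}\right)^2 2m\left(\frac{\kappa^2}{n}\right)^2\gtrsim\frac1m,
\]
and analogously on $B_N$; since $1/m\asymp\eps_n^2$, this yields $\exz[(M(x)-s_0^2(x))^2]\ge\varz[M(x)]\gtrsim\eps_n^2$ uniformly, so $\exz[\|M-s_0^2\|_2^2]\gtrsim\eps_n^2$. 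Combining gives $\exz[\|M-s_0^2\|_2^2]\asymp\eps_n^2$.

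The one point needing a little attention — and the closest thing here to an obstacle — is checking that the $O(\cdot)$ estimates imported from Lemma~\ref{thm:rate} are genuinely \emph{uniform} in $x$ rather than merely pointwise: this holds because the bias bound depends on $x$ only through the bin that contains it and the fixed constants $\mathcal K, L, \lambda$, and likewise for the variance. No fresh probabilistic input is required beyond Tonelli's theorem and a careful re-reading of the proof of Lemma~\ref{thm:rate}, including its treatment of the boundary bin $B_N$.
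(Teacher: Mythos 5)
Your proposal is correct and follows essentially the same route as the paper: Tonelli's theorem to interchange expectation and integral, followed by an application of the uniform pointwise bias and variance bounds already established in the proof of Lemma~\ref{thm:rate} on each bin. You spell out the two-sided bound (in particular, using $s_0\ge\kappa$ to lower-bound the variance term by a constant times $1/m$) a bit more explicitly than the paper does, but the underlying argument is the same.
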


\begin{proof}
Simply note that 
\begin{align*} {\exz [\|M-s_0\|^2_2]}
& =  {\int_0^1 \exz [|M(x)-s_0(x)|^2]\,\dd x}\\ & =  \sum_{k=1}^{N_n} \int_{B_k} \exz [|M(x)-s_0(x)|^2]\, \dd x,
\end{align*}
and apply the bounds derived in the proof of Lemma \ref{thm:rate} on each bin $B_k$ separately, $1\leq k\leq N_n$. 
\end{proof}

\begin{proof}[Proof of Theorem \ref{thm:rate-posterior-ig}]
First assume $b_0\equiv 0$. 
By Chebyshev's inequality, one has
\begin{equation}\label{eq:bv0}
\exz [\Pi_n(\|s^2-s_0^2\|_2 \ge h_n \eps_n \mid \scr{X}_n)] \le (h_n \eps_n)^{-2}\exz\left[\expi \left(\|s^2-s_0^2\|_2^2 \mid \mathcal{X}_n\right)\right].
\end{equation}
The bias-variance decomposition gives
\begin{equation*}\label{eq:bv1}
\exz\left[\expi (\|s^2-s_0^2\|_2^2|\mathcal{X}_n)\right] =\exz\left [ \|M-s_0^2\|_2^2\right] +\sum_{k=1}^{N_n}\int_{B_k}\exz[\vpi (s^2(x) \mid \mathcal{X}_n)]\dd x.
\end{equation*}
The behaviour of the  first term on the righthand side was derived in the proof of Lemma \ref{thm:rate}. For the second term, we consider $x\in B_k$ for $k< N$. As in the proof of Lemma \ref{thm:rate}, the case $x\in B_N$ is similar. We obtain
\begin{equation}
\label{eq:bv2}
\begin{split}
\vpi (s^2(x)|\mathcal{X}_n)&=\vpi (\th_k|\mathcal{X}_n) = \frac{(\beta+nZ_k/2)^2}{(\alpha+m/2-1)^2(\alpha+m/2-2)}\\
&= O\left(\frac1{m^3}\right) + O\left(\frac{n^2}{m^3}\right) Z_k^2.
\end{split}
\end{equation}
By Assumption~\ref{standing}~(c), $s_0$ is bounded by a positive constant $\scr{K}$. 
We have 
\begin{align*} \exz [Z_k^2] &= \varz [Z_k] + \left( \exz [Z_k]\right)^2=\sum_{i=(k-1)m+1}^{km} \varz [Y_{i,n}^2] + \left(\int_{B_k} s_0^2(u)\,\dd u \right)^2\\ &
 \le  2 \sum_{i=(k-1)m+1}^{km} \left(\int_{t_{i-1}}^{t_i} s^2(u)\,\dd u \right)^2 + \left(\frac{m}{n}\right)^2 \scr{K}^4 \\ &
 \le 2\sum_{i=(k-1)m+1}^{km} \scr{K}^4 n^{-2} + \left(\frac{m}{n}\right)^2 \scr{K}^4  =O\left(\frac{m}{n^2}\right)+O\left(\frac{m}{n}\right)^2=O\left(\frac{m}{n}\right)^2.
\end{align*}
This implies 
\begin{equation*}\label{eq:bv3}
\exz\left[\vpi (s^2(x)|\mathcal{X}_n)\right]=O\left(\frac1{m^3}\right) +O\left(\frac{n^2 }{m^3} \frac{m^2}{n^2}\right)=O\left(\frac1{m}\right).\end{equation*}
Combining the above order bounds then yields  
\[ \exz \left[\expi (\|s^2-s_0^2\|^2|\mathcal{X}_n) \right]  = \left[ O\left(\frac{1}{m}\right)+O\left(\frac{m}{n}\right)^{\lambda}\right]^2 + O\left(\frac1{m}\right) .\]
Balancing these terms was already done in the proof of Lemma \ref{thm:rate}, and gives the value for $m$ depending on $n$ as in display \eqref{eq:N-m}. This implies the stated result for $b_0\equiv 0$. 
Proposition \ref{prop1} implies the result is not only true for $b_0\equiv 0$, but in general.
\end{proof}

\begin{proof}[Proof of Theorem \ref{cor:rate:b:sup}]
We first prove
\begin{equation}\label{eq:statement1}
	\exz \left[ \sup_{x\in [0,1]}|M(x)-s_0^2(x)| \right] \asymp \tilde\eps_n.
\end{equation}  
For that, we first consider a selected bin $B_k$ for $k<N$ (the analysis on $B_N$ would yield the same order estimates) and $\sup_{x\in B_k}|M(x)-s_0^2(x)|$. Note that for $x\in B_k$ one has that $M(x)$ does not explicitly  depend on $x,$ being equal to $\exz(\theta_k|\mathcal{X}_n).$ We thus occasionally write $M(x)=M_k$, whenever this is convenient.
For any $x\in B_k$ one has, recalling $b(x)=\exz [M(x)]-s_0^2(x)$,
\begin{align*}
|M(x)-s_0^2(x)|^2 & \leq 2|M(x)-\exz [M(x)]|^2 + 2|\exz [M(x)]-s_0^2(x)|^2 \\
& = 2|M_k-\exz [M_k]|^2 + 2|b(x)|^2, 
\end{align*}
and hence
\begin{align*}
\sup_{x\in B_k}|M(x)-s_0^2(x)|^2 & \leq 2|M_k-\exz [M_k]|^2 + 2\sup_{x\in B_k}|b(x)|^2. 
\end{align*}
Turning to $\sup_{x\in [0,1]}|M(x)-s_0^2(x)|$, we obtain from the above that
\begin{equation}
\label{rate:loss}
\begin{split}
\sup_{x\in [0,1]}|M(x)-s_0^2(x)|^2 & = \sup_k \sup_{x\in B_k}|M(x)-s_0^2(x)|^2 \\
& \leq 2\sup_k\, |M_k-\exz [M_k]|^2 + 2\sup_k\sup_{x\in B_k}|b(x)|^2 \\
& \leq 2\sum_k\, |M_k-\exz [M_k]|^2 +2\sup_k\sup_{x\in B_k}|b(x)|^2,
\end{split}
\end{equation}
and therefore
\[
\exz\left[\sup_{x\in [0,1]}|M(x)-s_0^2(x)|^2\right]  \leq 2\sum_k\varz [M_k]  + 2\sup_k\sup_{x\in B_k}|b(x)|^2. 
\]
Note that in the proof of Lemma~\ref{thm:rate} we obtained the uniform order bounds,  not depending on $x$ and $k$,
$b(x)=O\left(\frac{1}{m}\right) + O\left(\frac{m}{n}\right)^{\lambda}$ and 
$\varz [M(x)] = O\left(\frac1{m}\right)$.
It follows that 
\begin{align*}
\exz \left[\sup_{x\in [0,1]}|M(x)-s_0^2(x)|^2\right]
& \leq N_n O\left(\frac{1}{m}\right) + \left(O\left(\frac{1}{m}\right)+O\left(\frac{m}{n}\right)^{\lambda}\right)^2 \\
& \leq O\left(\frac{n}{m^2}\right)  + \left(O\left(\frac{1}{m}\right)+O\left(\frac{m}{n}\right)^{\lambda}\right)^2.
\end{align*}
Balancing of the two summands, using $m=O(n^{\alpha})$, is obtained for $\alpha=\frac{2\lambda+1}{2\lambda+2}$. This results in 
\[
\exz\left[\sup_{x\in [0,1]}|M(x)-s_0^2(x)|^2\right] = O(n^{-\frac{\lambda}{\lambda+1}}),
\]
so that
\[
\exz\left[\sup_{x\in [0,1]}|M(x)-s_0^2(x)| \right]= O(n^{-\frac{\lambda}{2\lambda+2}}),
\]
which completes the proof of \eqref{eq:statement1}. 
The proof of the second statement of the theorem follows upon combining elements from the proofs of Theorems~\ref{thm:rate-posterior-ig} and of this result. 

Proposition \ref{prop1} implies the result is not only true for $b_0\equiv 0$, but in general.
\end{proof}

\appendix
\section{}
\label{appendix:asymp}

In this appendix we present an alternative asymptotic frequentist analysis of our Bayesian procedure. A principal difference with the one in the main text is that our prior on the coefficients $\theta_k$'s is not bound to be inverse gamma. On the downside, the $L_2$-posterior contraction rate we obtain is slower than that in Theorem~\ref{thm:rate-posterior-ig}. This is due to the fact that in our arguments we cannot rely on the conjugacy of the inverse gamma prior anymore.

\begin{defin}
\label{classX} Let $\mathcal{S}_n$ denote a set of dispersion coefficients $s\colon[0,1]\rightarrow[\kappa,\mathcal{K}],$ that are piecewise constant on the bins $B_k$:
\begin{equation}
\label{s1}
s=\sum_{k=1}^{N_n} \xi_k \ind_{B_k}.
\end{equation}
\end{defin}
The prior $\Pi_n$ on the dispersion coefficient $s$ is defined by putting a prior on the coefficients $\xi_k$'s. 

\begin{equation*}
s^2=\sum_{k=1}^{N_n} \xi_k^2 \ind_{B_k}=\sum_{k=1}^{N_n} \theta_k \ind_{B_k},
\end{equation*}
where we have set $\theta_k=\xi_k^2.$

\subsection{General contraction rate}\label{appendix:general}

\begin{thm}
\label{mainthm}
Let Assumption \ref{standing} hold with bounds $0<\kappa\leq s_0(t)\leq\mathcal{K}<\infty$ for all $t\in[0,1]$.
Assume the prior $\Pi_n$ is defined as the law of a random function $s$ from \eqref{s1}, where  the random variables $\kappa\leq \xi_k\leq \mathcal{K},$ $k=1,\ldots,N_n,$ are independent and identically distributed  with a density that is bounded away from zero on the interval $[\kappa,\mathcal{K}].$  
Then for any sequence $m_n\asymp n^{1-\alpha}$, equivalently $N_n\asymp n^{\alpha}$, with $\alpha \in [\frac{1}{2},1-\frac{\lambda}{2}]$,  
there exists a constant $\widetilde{M}>0$, such that for $\varepsilon_n= \widetilde{M}n^{-\beta}\log^\gamma n$  with $\beta=\frac{\lambda}{4}$ and arbitrary $\gamma>1$,
\rm
\begin{equation}\label{eq:post}
\ee_{b_0,s_0}[\Pi_n(U_{s_0,\varepsilon_n}^c|\scr{X}_n)]\rightarrow 0
\end{equation}
as $n\rightarrow\infty.$
\end{thm}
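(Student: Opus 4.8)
The plan is to run the usual Ghosal--van der Vaart posterior contraction scheme (denominator lower bound, exponential tests, combine), but carried out by hand with explicit Gaussian computations, the point being that here the truth $s_0$ does \emph{not} belong to the sieve $\mathcal{S}_n$ and, worse, the full laws $\mathbb{P}^n_{0,s_0}$ and $\mathbb{P}^n_{0,s}$ for $s\in\mathcal{S}_n$ are in general nearly mutually singular. First I would invoke Proposition~\ref{prop1} to reduce to $b_0\equiv 0$, so that the $Y_{i,n}$ become independent centred Gaussians with $\mathrm{Var}(Y_{i,n})=\int_{t_{i-1,n}}^{t_{i,n}}s_0^2$ and $L_n$ is the genuine likelihood. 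The crucial choice is the reference point $s_\star=\sum_k\sqrt{\theta_{\star,k}}\,\ind_{B_k}\in\mathcal{S}_n$ with $\theta_{\star,k}$ the average of $s_0^2$ over $B_k$. H\"older continuity gives $\|s_\star-s_0\|_\infty\lesssim (m_n/n)^\lambda=n^{-\alpha\lambda}=o(\varepsilon_n)$ (using $\alpha\ge \tfrac12$), hence $U_{s_0,\varepsilon_n}^c\cap\mathcal{S}_n\subseteq\{s\in\mathcal{S}_n:\|s-s_\star\|_2\ge\varepsilon_n/2\}$ for $n$ large, and one writes $\Pi_n(\,\cdot\mid\mathcal{X}_n)$ through the ratio $\Lambda_n(s)=L_n(s)/L_n(s_\star)$.

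Two elementary computations drive everything. Because $\theta_{\star,k}$ is exactly the bin-average, $\exz[Z_k]=\Delta_k\theta_{\star,k}$ \emph{without} approximation error, so a direct calculation gives $-\exz[\log\Lambda_n(s)]=\sum_k\tfrac{m_k}{2}(\theta_{\star,k}/\theta_k-1-\log(\theta_{\star,k}/\theta_k))\asymp n\|s-s_\star\|_2^2$ and $\varz[\log\Lambda_n(s)]\asymp n\|s-s_\star\|_2^2$, uniformly over piecewise constant $s$ with values in $[\kappa,\mathcal{K}]$; note the crucial absence of any cross-term in $\|s_\star-s_0\|$. Secondly, since the pointwise ratios $s^2/s_0^2$ stay in the fixed compact set $[\kappa^2/\mathcal{K}^2,\mathcal{K}^2/\kappa^2]$, the Hellinger affinity of two coordinate Gaussians is bounded below away from $1$ quantitatively, and a Riemann-sum comparison (the error between $\tfrac1n\sum_i(s(t_{i,n})-s_0(t_{i,n}))^2$ and $\|s-s_0\|_2^2$ is $O(n^{-\lambda})=o(\varepsilon_n^2)$) yields $\int\sqrt{\dd\mathbb{P}^n_{0,s_0}\,\dd\mathbb{P}^n_{0,s}}\le e^{-cn\|s-s_0\|_2^2}$ whenever $\|s-s_0\|_2\gtrsim\varepsilon_n$; the same Gaussian integral bounds $\ee_{0,s}[(L_n(s_0)/L_n(s_\star))^2]\le e^{Cn^{1-\alpha\lambda}}$ for $n$ large.

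With these in hand the two sides are standard. For the denominator, the assumption on the prior enters only through the mass bound $\Pi_n(\{\|s-s_\star\|_\infty\le\rho_n\})\gtrsim (c\rho_n)^{N_n}=e^{-N_n\log(1/\rho_n)}$; taking $\rho_n\asymp\sqrt{N_n/n}$ (so $n\rho_n^2\asymp N_n\to\infty$), restricting the integral to this ball and combining Jensen's inequality with a Chebyshev bound based on the variance estimate gives $D_n\ge e^{-C_1N_n\log n}=:b$ off a set of $\ppz$-probability $o(1)$. For the tests I would take a minimal $\varepsilon_n/4$-net of $\{s\in\mathcal{S}_n:\|s-s_0\|_2\ge\varepsilon_n\}$, of cardinality $\le e^{C_2N_n\log n}$ (a bounded finite-dimensional box), and let $\phi_n$ be the maximum of the Neyman--Pearson tests of $\mathbb{P}^n_{0,s_0}$ against the corresponding $L_2$-balls; the affinity bound yields $\exz\phi_n\le e^{C_2N_n\log n-c_3n\varepsilon_n^2}$ and $\sup_{\|s-s_0\|_2\ge\varepsilon_n}\ee_{0,s}(1-\phi_n)\le e^{-c_3n\varepsilon_n^2}$.

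Finally one combines through $\exz[\Pi_n(U_{s_0,\varepsilon_n}^c\mid\mathcal{X}_n)]\le\exz\phi_n+\ppz(D_n<b)+b^{-1}\sup_s\exz[(1-\phi_n)\Lambda_n(s)]$. In the last term, writing $\Lambda_n(s)=\tfrac{L_n(s)}{L_n(s_0)}\tfrac{L_n(s_0)}{L_n(s_\star)}$, changing measure to $\mathbb{P}^n_{0,s}$ and applying Cauchy--Schwarz with the two displayed bounds gives $\exz[(1-\phi_n)\Lambda_n(s)]\le e^{-c_3n\varepsilon_n^2/2+Cn^{1-\alpha\lambda}/2}$. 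With $\varepsilon_n=\widetilde{M}n^{-\lambda/4}\log^\gamma n$ one has $n\varepsilon_n^2\asymp n^{1-\lambda/2}\log^{2\gamma}n$, which dominates $N_n\log n=n^\alpha\log n$ because $\alpha\le 1-\tfrac\lambda2$ (the equality case absorbed by $\gamma>1$) and dominates $n^{1-\alpha\lambda}$ because $\alpha\ge\tfrac12$; choosing the fixed constant $\widetilde{M}$ large enough then forces all three terms to $0$, and Proposition~\ref{prop1} removes the restriction $b_0\equiv 0$. The main obstacle — and precisely the reason the rate degrades to $n^{-\lambda/4}$ and the constraint $\alpha\ge\tfrac12$ appears — is the misspecification $s_0\notin\mathcal{S}_n$: one must reference everything to $s_\star$ so that the expected log-likelihood ratio has no cross-term, and one pays a $\chi^2$-type price $e^{O(n^{1-\alpha\lambda})}$ when transporting the testing bound through the likelihood ratio $L_n(s_0)/L_n(s_\star)$.
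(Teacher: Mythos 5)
Your proof is correct, but it travels a genuinely different technical road than the paper's own argument, even though both are organized around the same high-level ``lower-bound the denominator, kill the numerator'' roadmap of Ghosal--Ghosh--van der Vaart. The paper never introduces your reference point $s_\star$: all its likelihood ratios $R_n(s)=L_n(s)/L_n(s_0)$ are normalized by the \emph{true} $s_0\notin\mathcal{S}_n$, and the heavy lifting is done by van de Geer's empirical-process machinery (Corollary~8.8 of \cite{geer00}, applied via Lemmas~\ref{lemma0} and \ref{lemma2} to the process $n^{-1}\sum_i\mathcal{W}_if_s(z_i)$, with a peeling over $L_2$-shells $A_{j,\varepsilon_n}$ and explicit bracketing-entropy estimates for the piecewise-constant sieve). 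You, by contrast, reference everything to the projection $s_\star\in\mathcal{S}_n$ so that $\exz[\log\Lambda_n(s)]$ and $\varz[\log\Lambda_n(s)]$ are \emph{exactly} $\asymp n\|s-s_\star\|_2^2$ with no cross-term in $\|s_\star-s_0\|$, obtain the denominator lower bound from the classical Jensen-plus-Chebyshev argument on a small prior ball of radius $\rho_n\asymp\sqrt{N_n/n}$, build Le Cam-type exponential tests over an $\varepsilon_n/4$-net from Hellinger-affinity bounds (the coordinates being independent scaled Gaussians makes these explicit), and pay for the misspecification $s_0\notin\mathcal{S}_n$ once, via Cauchy--Schwarz, through the $\chi^2$-type moment $\ee_{0,s}[(L_n(s_0)/L_n(s_\star))^2]\le e^{Cn^{1-\alpha\lambda}}$. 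Both routes end up confronting the same two sources of error — the bin-approximation error $O\big((m_n/n)^\lambda\big)$, which is what forces $\alpha\ge\tfrac12$ given $\beta=\lambda/4$, and the Riemann-sum error $O(n^{-\lambda})$, which is harmless once $4\beta\le\lambda$ — and they produce the same rate; they just book-keep them differently (your version in the $\chi^2$-correction and the projection bias, the paper's in \eqref{approx*}, \eqref{8*}, \eqref{bound*}). What each approach buys: yours is more transparent and ``Gaussian-native,'' replacing bracketing-entropy integrals with a finite-dimensional net count and a one-line Hellinger-affinity computation, and it cleanly isolates the misspecification cost in a single explicit factor. The paper's approach does not exploit the exact Gaussian structure beyond the second moment of $\mathcal{W}_i$ and so is more readily portable to observation models where the increments are not exactly Gaussian, at the cost of a heavier entropy bookkeeping and of having to carry the bias $s_0\notin\mathcal{S}_n$ through every estimate. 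One small cosmetic point: the $\gamma>1$ requirement in the statement is generous — in your version (as in the paper's own conditions collected in Remark~\ref{rem:optimal}) $\gamma>\tfrac12$ would already absorb the boundary case $\alpha=1-\lambda/2$ in $n\varepsilon_n^2\gg N_n\log n$, but $\gamma>1$ certainly suffices and matches the statement.
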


The essential term determining the posterior contraction rate is $n^{-\beta}$. Here $\beta$ depends on $\lambda$ in an increasing way: a smoother function $s_0$ allows for a faster contraction rate. The optimality of the choice of $\alpha$, the exponent $\beta$ and the condition $\gamma>1$, at least within the context of our proof,
is elaborated in Remark~\ref{rem:optimal} on page \pageref{rem:optimal}. The best possible rate obtainable from Theorem \ref{mainthm} is achieved for $\lambda = 1$ and is (essentially) $n^{-1/4}$. The possibility that the posterior contraction rate for arbitrary priors is in fact faster than the one given in Theorem \ref{mainthm} cannot be discarded: although our proofs for this general result use intricate technical arguments, they still might be not sharp enough. 
\rm

\begin{rem}
The statement of Theorem \ref{mainthm} is in some respect similar to that of Theorem 1 in \cite{gugu16}. Significant differences are that \cite{gugu16} choose a different prior and assume that the function $s_0$ is differentiable; cf.~our remarks in the introduction to the paper. Not only does this amount to difference in statements, but also proofs in the present paper are more involved than the ones in \cite{gugu16}. 
\end{rem}

\begin{rem}\label{rem:l2}
The inequality
\begin{equation*}
\|s-s_0\|_2 \geq \frac{1}{2\mathcal{K}} \|s^2-s_0^2\|_2,
\end{equation*}
valid for $s\in\mathcal{S}_n$ and $s_0$ satisfying Assumption \ref{standing}, together with Theorem \ref{mainthm} implies that also the posterior for the diffusion coefficient $s_0^2$ contracts around the truth with the rate $\varepsilon_n$ given in that theorem. 
\end{rem}

\subsection{Proof of Theorem~\ref{mainthm}}
\label{appendix:proofsgeneral}

Some of by now classical references, where general conditions for derivation of posterior contraction rates are given, include \cite{ghosal00}, \cite{ghosal07} and \cite{shen01}. The proof of Theorem \ref{mainthm} follows the same roadmap as these papers, however without appealing directly to their results, as our statistical setup is somewhat different from the ones covered by those papers. In particular, note that the distribution of the observations $X_{t_{i,n}}\!$'s depends on the index $n,$ which is not covered by the results in the above-mentioned references.

We first introduce some notation and definitions: $p_{i,n,s}$ and $p_{i,n,0}$ will be the densities of increments $Y_{i,n}=X_{t_{i,n}}-X_{t_{i-1,n}}$ under the parameter values $s$ and $s_0,$ with drift equal to $b_0=0$ in both cases. The notation $Z_{i,n,s}(Y_{i,n})=\log (p_{i,n,s}(Y_{i,n})/p_{i,n,0}(Y_{i,n}))$ will stand for the log-likelihood ratio corresponding to a single observation $Y_{i,n}.$ $R_n(s)=L_n(s)/L_n(s_0)$ will denote the likelihood ratio. Furthermore, in line with the notation in \cite{geer00}, we let
\begin{equation*}
z_i=t_{i-1,n},\quad\mathcal{W}_i=1-\frac{Y_{i,n}^2}{\int_{t_{i-1,n}}^{t_{i,n}}s_0^2(u)\,\dd u}, \quad f_s(z)=\frac{\int_{z}^{z+1/n}[s_0^2(u)-s^2(u)]\,\dd u}{\int_{z}^{z+1/n}s^2(u)\,\dd u}.
\end{equation*}
Under the parameter pair $(0,s_0),$ the random variables $\mathcal{W}_i$'s are i.i.d.\ with zero mean and variance equal to two. Since their distributions do not depend on $n,$ we take the liberty to omit an extra index $n$ in our notation. For notational simplicity, we also omit an extra index $n$ in $z_i$'s and $f_s,$ though, strictly speaking, it is still required there.

\begin{proof}[Proof of Theorem \ref{mainthm}]
As in the proofs of results from the main body of the paper, we may assume $b_0=0$: the statement for a general $b_0$ follows from this particular case, see Proposition~\ref{prop1}. The general structure of the proof is similar to the one of Theorem 1 in \cite{gugu16} and ultimately \cite{ghosal00} and \cite{shen01}, but many details differ, as evidenced in particular by the proofs of the lemmas in Appendix~\ref{section:proofsremaining}.

Write
\begin{equation*}
\Pi_n({U}_{s_0,\varepsilon_n}^c\mid X_{t_{0,n}}\ldots,X_{t_{n,n}})
=\frac{\int_{{U}_{s_0,\varepsilon_n}^c} R_n(s) \Pi(\dd s)}{ \int_{\mathcal{S}} R_n(s) \Pi_n(\dd s) }=\frac{\mathcal{N}_n}{\mathcal{D}_n}.
\end{equation*}
Let $\epsilon>0$. For any events $E_n$ and $F_n$ we have
\begin{equation}\label{eq:ab}
\mathbb{P}_{0,s_0}\left(\frac{\mathcal{N}_n}{\mathcal{D}_n}>\epsilon\right)\leq \mathbb{P}_{0,s_0}\left(\left\{\frac{\mathcal{N}_n}{\mathcal{D}_n}>\epsilon\right\}\cap E_n\cap F_n\right)+\mathbb{P}_{0,s_0}(E_n^c)+\mathbb{P}_{0,s_0}(F_n^c),
\end{equation}
which we shall use for suitably chosen $E_n$ and $F_n$ having the property $\mathbb{P}_{0,s_0}(E_n^c)\to 0$ and $\mathbb{P}_{0,s_0}(F_n^c)\to 0$.

Denote $S_n(s)=n^{-1}\log R_n(s)$ and note that
$
\mathcal{D}_n=\int_{\mathcal{S}_n} \exp( n S_n(s)) \Pi_n(\dd s).
$
As in \cite{gugu16}, we write
\begin{equation*}
S_n(s)=\frac{1}{2}\frac{1}{n}\sum_{i=1}^{n} \mathcal{W}_i f_s(z_{i})+\frac{1}{2}\frac{1}{n}\sum_{i=1}^{n}\left[  \log\left( 1+f_s(z_{i}) \right) - f_s(z_{i}) \right].
\end{equation*}
Below and in Appendix~\ref{section:proofsremaining} we need the neighbourhoods $V_{s_0,\varepsilon}=\left\{ {s}\in\mathcal{S}: {\| {s}-s_0\|_{\infty}} <\varepsilon \right\}$ for arbitrary $\eps>0$,  
where $\|\cdot\|_{\infty}$ is the usual $L_{\infty}$-norm. We will use these for $\eps=\widetilde{\eps}_n\asymp n^{-\beta}\log^\gamma n$.
We have the following lower bound on $\mathcal{D}_n$,
\[
\mathcal{D}_n\geq  \int_{V_{s_0,\widetilde{\varepsilon}_n}} R_n(s)\Pi_n(\dd s)  \geq \inf_{s\in V_{s_0,\widetilde{\varepsilon}_n}} R_n(s)\times\Pi_n(V_{s_0,\widetilde{\varepsilon}_n}).
\]

Let 
\begin{equation*}
E_n=\left\{\sup_{s\in V_{s_0,\widetilde{\varepsilon}_n}}\left|\frac{1}{n}\sum_{i=1}^{n} \mathcal{W}_i f_s(z_{i})\right|\leq \delta_n\right\},
\end{equation*}
with $\delta_n=\widetilde{\varepsilon}_n^2$. Lemmas \ref{lemma1} and \ref{lemma0} from Appendix~\ref{section:proofsremaining} yield that on $E_n$ one has
\begin{align*}
\inf_{s\in V_{s_0,\widetilde{\varepsilon}_n}}R_n(s)
& \geq \exp\left(- \frac{2\mathcal{K}^2}{\kappa^4} n\widetilde{\varepsilon}_n^2-\frac{n}{2}\sup_{s\in V_{s_0,\widetilde{\varepsilon}_n}}\left|\frac{1}{n}\sum_{i=1}^{n} \mathcal{W}_i f_s(z_{i})\right|\right) \\
& \geq \exp\left(- \frac{2\mathcal{K}^2}{\kappa^4} n\widetilde{\varepsilon}_n^2-\frac{n\widetilde{\varepsilon}_n^2}{2}\right),
\end{align*}
whereas $\mathbb{P}_{0,s_0}(E_n^c)\to 0$. Hence on $E_n$ we have, using the prior mass result of Lemma~\ref{lemma_prior}, 
\[
\mathcal{D}_n\geq  \exp\left(
- \left(\frac{2\mathcal{K}^2}{\kappa^4} +\frac{1}{2}+\overline{C}\right)n\widetilde{\varepsilon}_n^2
\right).
\]
Next we consider $\mathcal{N}_n$, for which we have the trivial upper bound 
\[
\mathcal{N}_n\leq \sup_{{U}_{s_0,\varepsilon_n}^c} R_n(s).
\]
Let
\[
{F_n}=\left\{\sup_{s\in U_{s_0,\varepsilon_n}^c}R_n(s)\leq \exp\left( -c_1 n\varepsilon_n^2 \right)\right\},
\]
with $\eps_n\asymp n^{-\beta}\log^\gamma n$ and the constant $c_1>0$ as in Lemma ~\ref{lemma2}. According to the latter lemma, we have $\mathbb{P}_{0,s_0}(F_n^c)\to 0$.
Putting the above results together, we obtain on $E_n\cap F_n$ the inequality
\[
\frac{\mathcal{N}_n}{\mathcal{D}_n}\leq \exp\left(\left(
\frac{2\mathcal{K}^2}{\kappa^4} +\frac{1}{2}+\overline{C}\right)n\widetilde{\varepsilon}_n^2-c_1n{\varepsilon}_n^2
\right).
\]
Taking $\varepsilon_n = \widetilde{M} \widetilde{\varepsilon}_n,$ with $\widetilde{M}>0$ large enough, gives a positive constant $M$ for which on $E_n\cap F_n$ the inequality
\[
\frac{\mathcal{N}_n}{\mathcal{D}_n}\leq \exp\left(-Mn\widetilde{\varepsilon}_n^2
\right)
\]
holds. It follows that
\[
\mathbb{P}_{0,s_0}\left(\left\{\frac{\mathcal{N}_n}{\mathcal{D}_n}>\epsilon\right\}\cap E_n\cap F_n\right)=0
\]
for all large $n$. We then obtain from \eqref{eq:ab} that
\[
\Pi_n(U_{s_0,\varepsilon_n}^c|\scr{X}_n)\convp 0.
\]
The assertion of the theorem now follows from this and the dominated convergence theorem. The optimal choice for $\beta$ in $\eps_n\asymp n^{-\beta}\log^\gamma n,$ as well as of $\alpha$ in $N_n\asymp n^{\alpha},$ is a consequence of a discussion in Remark \ref{rem:optimal}.
\end{proof}

\subsection{Proofs of the remaining technical results}
\label{section:proofsremaining}

We use the following notation: $M_{\varepsilon}$ will denote the smallest positive integer, such that $2^{M_{\varepsilon}}\varepsilon^2\geq 4 \mathcal{K}^2.$ Note that this definition implies $2^{M_{\varepsilon}}\varepsilon^2\leq 8\mathcal{K}^2,$ so that $M_{\varepsilon}\asymp \log_2(1/{\varepsilon})$  for $\varepsilon\rightarrow 0.$ We also define sets $A_{j,\varepsilon}=\{ s\in\mathcal{S}: 2^j\varepsilon^2\leq \|s-s_0\|_2^2 < 2^{j+1}\varepsilon^2 \}$ and $B_{j,\varepsilon}=\{ s\in\mathcal{S}: \|s-s_0\|_2^2 < 2^{j+1}\varepsilon^2 \}$ for $j=0,1,\ldots,M_{\varepsilon}.$ The measure $Q_n$ will be the uniform discrete probability measure on points $z_i$'s, while $\|\cdot\|_{Q_n}$ will denote the $L_2(Q_n)$-norm. In this appendix we use these sets for $\eps=\widetilde{\eps}_n\asymp n^{-\beta}\log^\gamma n$.

The next lemma verifies the prior mass condition in the proof of our main theorem. This corresponds, roughly speaking, to e.g.\ condition (2.4) in Theorem 2.1 of \cite{ghosal00}. This prior mass condition is crucial in establishing posterior contraction rates and we refer to \cite{ghosal00} for an additional discussion on it. Note that in \cite{gugu16} this condition is verified for another, somewhat artificial, prior.
\begin{lemma}
\label{lemma_prior}
Under the conditions of Theorem \ref{mainthm}, the prior $\Pi_n$ satisfies
\begin{equation}
\label{priorC}
\Pi_n( V_{s_0,\widetilde{\varepsilon}_n} )  \gtrsim e^{- \overline{C} n \widetilde{\varepsilon}_n^2}
\end{equation}
for some constant $\overline{C}>0.$
\end{lemma}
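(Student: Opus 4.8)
The plan is to bound $\Pi_n(V_{s_0,\widetilde{\varepsilon}_n})$ from below by a product of one–dimensional prior probabilities (one per bin), take logarithms, and then match the resulting exponent against $n\widetilde{\varepsilon}_n^2$ using the constraints on $\alpha$, $\beta$ and $\gamma$.

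\textbf{Step 1: localising the sup-norm event.} Each bin $B_k$ has length at most $2m_n/n$, so by $\lambda$-H\"older continuity of $s_0$ its oscillation on $B_k$ is at most $\omega_n:=2^{\lambda}L(m_n/n)^{\lambda}$. Put $\eta_n:=\widetilde{\varepsilon}_n-\omega_n$. Since $\alpha\ge \tfrac12>\tfrac14$, one has $\omega_n\asymp n^{-\alpha\lambda}=o(n^{-\lambda/4}\log^{\gamma}n)=o(\widetilde{\varepsilon}_n)$, so $\eta_n>0$ and in fact $\eta_n\ge \widetilde{\varepsilon}_n/2$ for $n$ large. If $|\xi_k-s_0(a_{k-1})|<\eta_n$ for every $k$, then for $t\in B_k$, $|s(t)-s_0(t)|\le |\xi_k-s_0(a_{k-1})|+|s_0(a_{k-1})-s_0(t)|<\eta_n+\omega_n=\widetilde{\varepsilon}_n$, hence $s\in V_{s_0,\widetilde{\varepsilon}_n}$.

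\textbf{Step 2: one-dimensional estimate and factorisation.} The $\xi_k$ are i.i.d. with a density bounded below by some $c_0>0$ on $[\kappa,\mathcal{K}]$, and $s_0(a_{k-1})\in[\kappa,\mathcal{K}]$, so $(s_0(a_{k-1})-\eta_n,\,s_0(a_{k-1})+\eta_n)\cap[\kappa,\mathcal{K}]$ has length at least $\eta_n$ once $\eta_n\le \mathcal{K}-\kappa$, giving $\Pi_n(|\xi_k-s_0(a_{k-1})|<\eta_n)\ge c_0\eta_n$. By the inclusion from Step~1 and independence of the $\xi_k$,
\[
\Pi_n(V_{s_0,\widetilde{\varepsilon}_n})\ \ge\ \prod_{k=1}^{N_n}\Pi_n\big(|\xi_k-s_0(a_{k-1})|<\eta_n\big)\ \ge\ (c_0\eta_n)^{N_n}.
\]

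\textbf{Step 3: taking logarithms and balancing.} Using $\eta_n\ge\widetilde{\varepsilon}_n/2$ and $\widetilde{\varepsilon}_n\asymp n^{-\beta}\log^{\gamma}n$, we get $-\log(c_0\eta_n)\le C'\log n$ for large $n$ and a constant $C'$, so $\log\Pi_n(V_{s_0,\widetilde{\varepsilon}_n})\ge -C'N_n\log n$. It then suffices to show $N_n\log n\lesssim n\widetilde{\varepsilon}_n^2$. Here $n\widetilde{\varepsilon}_n^2\asymp n^{1-2\beta}\log^{2\gamma}n=n^{1-\lambda/2}\log^{2\gamma}n$ (using $\beta=\lambda/4$), while $N_n\asymp n^{\alpha}\le n^{1-\lambda/2}$ by the constraint $\alpha\le 1-\lambda/2$, and $\log n\le\log^{2\gamma}n$ since $\gamma>1$; hence $N_n\log n\lesssim n^{1-\lambda/2}\log^{2\gamma}n\asymp n\widetilde{\varepsilon}_n^2$. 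This gives $\log\Pi_n(V_{s_0,\widetilde{\varepsilon}_n})\ge-\overline{C}\,n\widetilde{\varepsilon}_n^2$ for a suitable $\overline{C}>0$, i.e. \eqref{priorC}.

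The argument is essentially routine bookkeeping; the only points requiring care are (i) verifying $\eta_n>0$, i.e. that the per-bin oscillation $\omega_n$ is negligible compared to $\widetilde{\varepsilon}_n$, which is where the lower bound $\alpha\ge\tfrac12$ is used, and (ii) the final comparison $N_n\log n\lesssim n\widetilde{\varepsilon}_n^2$, which is precisely where the upper constraint $\alpha\le 1-\lambda/2$ (equivalently $\beta=\lambda/4\le(1-\alpha)/2$) and $\gamma>1$ enter. No genuine obstacle is anticipated beyond matching these exponents correctly.
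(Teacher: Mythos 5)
Your proof is correct and follows essentially the same route as the paper's: factorise by independence of the $\xi_k$'s, use H\"older continuity to bound the per-bin oscillation so that the sup-norm event on each bin reduces to a pointwise event at $a_{k-1}$, lower-bound each per-bin probability by a constant times $\widetilde{\varepsilon}_n$ via the density bound, and finally compare $N_n\log(1/\widetilde\varepsilon_n)$ with $n\widetilde{\varepsilon}_n^2$ using the constraints on $\alpha,\beta,\gamma$. The only cosmetic difference is that you make the oscillation budget $\eta_n=\widetilde\varepsilon_n-\omega_n$ explicit where the paper simply invokes \eqref{eq:m} and replaces $\widetilde\varepsilon_n$ by $\widetilde\varepsilon_n/2$ for large $n$.
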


\begin{proof}
Since $\xi_k$'s are independent, we have
\begin{align*}
\Pi_n( V_{s_0,\widetilde{\varepsilon}_n} )&=\Pi_n\left( \bigcap_{k=1}^{N_n} \left\{ \sup_{x\in B_k} |s(x)-s_0(x)|<\widetilde{\varepsilon}_n \right\} \right)\\
&=\prod_{k=1}^{N_n}\Pi_n\left( \sup_{x\in B_k} | s(x) -s_0(x)|<\widetilde{\varepsilon}_n \right) =\prod_{k=1}^{N_n}\Pi_n\left( \sup_{x\in B_k} |\xi_k-s_0(x)|<\widetilde{\varepsilon}_n \right).
\end{align*}
Now, since $s_0$ is H\"older continuous and $B_k$ is of length at most $2m_n/n$, we have
\begin{equation*}
s_0(x)=s_0(a_{k-1})+O\left(\frac{m_n}{n}\right)^\lambda, \quad x\in B_k,
\end{equation*}

where the order term is uniform in $x\in [0,1]$.
\rm
Remember also that by our conditions
\begin{equation}
\label{eq:m}
\left(\frac{m_n}{n}\right)^\lambda \ll \widetilde{\varepsilon}_n.
\end{equation}
Then for all $n$ large enough and all $k,$
\begin{align*}
\Pi_n\left( \sup_{x\in B_k} |\xi_k-s_0(x)|<\widetilde{\varepsilon}_n \right) & \geq \Pi_n\left( |\xi_k-s_0(a_{k-1})|<\frac{\widetilde{\varepsilon}_n}{2} \right)\\
&\geq\textrm{const} \cdot \widetilde{\varepsilon}_n,
\end{align*}
where $\textrm{const}>0$ is 

some constant independent of $k$ and $n$, 
\rm
and the last inequality comes from the fact that $\xi_k$ has a density bounded away from zero on $[\kappa,\mathcal{K}].$ It then follows that
\begin{align*}
\Pi_n( V_{s_0,\widetilde{\varepsilon}_n} )&\geq (\textrm{const} \cdot \widetilde{\varepsilon}_n)^{N_n}\\
&=e^{N_n \log(\textrm{const} \cdot \widetilde{\varepsilon}_n)}.
\end{align*}
We want to show existence of a constant $\overline{C}>0,$ such that for all $n$ large enough,
\begin{equation*}
N_n \log(\textrm{const} \cdot \widetilde{\varepsilon}_n) \geq - \overline{C} n \widetilde{\varepsilon}_n^2.
\end{equation*}
But this is immediate from our conditions on $N_n$ (equivalently, $m_n$) and $\widetilde{\varepsilon}_n,$ e.g.\ for $\overline{C}=1.$ The proof of the lemma is completed.
\end{proof}

The following lemma is an analogue of Lemma~A.2 in \cite{gugu16}. The statement is slightly different, and so is the proof.

\begin{lemma}
\label{lemma1}
Let the conditions of Theorem \ref{mainthm} hold and let $s\in V_{s_0,\widetilde{\varepsilon}_n}.$ Then, uniformly in $s,$
\begin{equation*}
\frac{1}{2}\frac{1}{n}\sum_{i=1}^n \left\{ \log(1+f_s(z_i)) - f_s(z_i) \right\}
\geq - \frac{2\mathcal{K}^2}{\kappa^4} \widetilde{\varepsilon}_n^2.
\end{equation*}
\end{lemma}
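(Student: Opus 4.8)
The plan is to reduce the claim to a pointwise-in-$i$ estimate for $\log(1+f_s(z_i))-f_s(z_i)$, which in turn rests on a uniform upper bound for $|f_s(z_i)|$ combined with an elementary convexity inequality.

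First I would bound $|f_s(z_i)|$ uniformly over $i=1,\dots,n$ and over $s\in V_{s_0,\widetilde\varepsilon_n}$ lying in the support of $\Pi_n$ (so that $\kappa\le s\le\mathcal K$). Since $z_i=t_{i-1,n}$, the interval $[z_i,z_i+1/n]=[t_{i-1,n},t_{i,n}]$ has length $1/n$, so the denominator of $f_s(z_i)$ satisfies $\int_{z_i}^{z_i+1/n}s^2(u)\,\dd u\ge \kappa^2/n$, while, using $\|s-s_0\|_\infty<\widetilde\varepsilon_n$ and $|s_0^2-s^2|=|s_0-s|\,(s_0+s)\le 2\mathcal K\widetilde\varepsilon_n$, the numerator obeys
\[
\left|\int_{z_i}^{z_i+1/n}\bigl[s_0^2(u)-s^2(u)\bigr]\,\dd u\right|\le \frac{2\mathcal K\widetilde\varepsilon_n}{n}.
\]
Hence $|f_s(z_i)|\le 2\mathcal K\widetilde\varepsilon_n/\kappa^2$ for all $i$ and all such $s$. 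Since $\widetilde\varepsilon_n\to0$, for all $n$ large enough this is at most $\tfrac12$, so in particular $f_s(z_i)\ge-\tfrac12$ uniformly.

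Next I would invoke the elementary inequality $\log(1+x)-x\ge-x^2$, valid for all $x\ge-\tfrac12$; this follows from a sign analysis of $g(x)=\log(1+x)-x+x^2$, whose derivative is $g'(x)=x(1+2x)/(1+x)$, so $g$ attains its minimum over $[-\tfrac12,\infty)$ at $x=0$, where $g(0)=0$. Applying this with $x=f_s(z_i)$ and then the bound on $|f_s(z_i)|$ gives, for all $i$ and uniformly in $s$,
\[
\log(1+f_s(z_i))-f_s(z_i)\ \ge\ -f_s(z_i)^2\ \ge\ -\frac{4\mathcal K^2}{\kappa^4}\widetilde\varepsilon_n^2 .
\]
Averaging over $i=1,\dots,n$ and multiplying by $\tfrac12$ yields exactly
\[
\frac12\cdot\frac1n\sum_{i=1}^n\bigl\{\log(1+f_s(z_i))-f_s(z_i)\bigr\}\ \ge\ -\frac{2\mathcal K^2}{\kappa^4}\widetilde\varepsilon_n^2 ,
\]
which is the assertion.

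There is no serious obstacle: the only points requiring care are that the bound on $|f_s(z_i)|$ be genuinely uniform in both $i$ and $s$ (which it is, depending only on $\kappa$, $\mathcal K$ and $\widetilde\varepsilon_n$), and that $|f_s(z_i)|\le\tfrac12$ so the logarithmic inequality applies (which holds for all $n$ large since $\widetilde\varepsilon_n\to0$). I would note in passing that the argument uses, besides $s\in V_{s_0,\widetilde\varepsilon_n}$, only the lower and upper bounds $\kappa\le s\le\mathcal K$; if one insisted on arbitrary $s$ merely satisfying $\|s-s_0\|_\infty<\widetilde\varepsilon_n$, one would replace the lower bound $\kappa$ on $s$ by $\kappa-\widetilde\varepsilon_n$, which affects only the constant for large $n$.
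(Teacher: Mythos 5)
Your proof is correct and follows essentially the same route as the paper's: both rest on the elementary inequality $\log(1+x)-x\ge -x^2$ (valid near $0$) and a uniform bound $f_s^2(z_i)\le 4\mathcal{K}^2\widetilde\varepsilon_n^2/\kappa^4$. The only cosmetic difference is that the paper bounds $f_s^2(z_i)$ by applying Cauchy--Schwarz to the numerator before using $|s_0^2-s^2|\le 2\mathcal{K}\widetilde\varepsilon_n$, whereas you bound $|f_s(z_i)|$ directly; your version is marginally more economical and also makes explicit the requirement $|f_s(z_i)|\le\tfrac12$ for large $n$, which the paper leaves implicit by citing an earlier reference.
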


\begin{proof}
As in the proof in \cite{gugu16}, one derives
\begin{equation}\label{eq:lbound1}
\frac{1}{2}\frac{1}{n}\sum_{i=1}^n \left\{ \log(1+f_s(z_i)) - f_s(z_i) \right\}
\geq 
-\frac{1}{2n}\sum_{i=1}^n f_s^2(z_i).
\end{equation}
Using the Cauchy-Schwarz inequality, the bounds on $s$ and the fact that $s\in V_{s_0,\widetilde{\varepsilon}_n}$, 
one gets
\begin{align*}
f_s^2(z_i) & = \left( \frac{\int_{z}^{z+1/n}[s_0^2(u)-s^2(u)]\,\dd u}{\int_{z}^{z+1/n}s^2(u)\,\dd u}\right)^2  \leq \frac{\frac{1}{n}\int_{z}^{z+1/n}[s_0^2(u)-s^2(u)]^2\,\dd u}{\kappa^4/n^2} \\
&  \leq  \frac{\frac{4\mathcal{K}^2}{n}\int_{z}^{z+1/n}[s_0(u)-s(u)]^2\,\dd u}{\kappa^4/n^2}  \leq \frac{4\mathcal{K}^2\widetilde{\varepsilon}_n^2}{\kappa^4}.
\end{align*}
The result follows by inserting the latter upper bound into \eqref{eq:lbound1}.
\end{proof}

Lemma~\ref{lemma0} below is a preciser version of Lemma~A.1 in \cite{gugu16}. Its proof is similar in general terms, but differs substantially in the entropy estimates used, as well as some other arguments. In its proof and that of Lemma~\ref{lemma3} we will need the following lemma.
\begin{lemma}\label{lemma:lipschitz}
Let $g:[0,\infty)\to\mathbb{R}$ be H\"{o}lder continuous of order $\lambda>0$ and H\"{o}lder constant $L>0.$ Then 
\[
\left|\int_z^{z+h}g(u)\,\dd u - hg(z)\right|\leq \frac{Lh^{1+\lambda}}{1+\lambda}.
\]
Furthermore, if $z_i=\frac{i}{n}$, for $i=1,\ldots,n$, then 
\[
\left|\int_0^{1}g(u)\,\dd u - \frac{1}{n}\sum_{i=1}^ng(z_i)\right|\leq \frac{L}{(1+\lambda)n^\lambda}.
\]
\end{lemma}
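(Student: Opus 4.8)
The plan is to prove both inequalities by the same elementary device: subtract a constant inside the integral and estimate the resulting integrand pointwise via Hölder continuity.

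For the first inequality, I would write $hg(z)=\int_z^{z+h}g(z)\,\dd u$, so that
\[
\int_z^{z+h}g(u)\,\dd u-hg(z)=\int_z^{z+h}\bigl(g(u)-g(z)\bigr)\,\dd u.
\]
Taking absolute values, moving them inside the integral, and applying $|g(u)-g(z)|\le L|u-z|^\lambda=L(u-z)^\lambda$ for $u\ge z$, the right-hand side is bounded by $\int_z^{z+h}L(u-z)^\lambda\,\dd u=Lh^{1+\lambda}/(1+\lambda)$, which is exactly the claimed bound.

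For the second inequality, I would first record the ``right-endpoint'' analogue of the first estimate, obtained by the identical computation: $\left|\int_z^{z+h}g(u)\,\dd u-hg(z+h)\right|\le Lh^{1+\lambda}/(1+\lambda)$, now using $|g(u)-g(z+h)|\le L(z+h-u)^\lambda$ on $[z,z+h]$. Then I would split $\int_0^1 g(u)\,\dd u=\sum_{i=1}^n\int_{(i-1)/n}^{i/n}g(u)\,\dd u$ and apply this estimate on each subinterval with $z=(i-1)/n$ and $h=1/n$, noting that $z+h=i/n=z_i$. Summing the $n$ resulting bounds $L(1/n)^{1+\lambda}/(1+\lambda)$ and using the triangle inequality yields $\left|\int_0^1 g(u)\,\dd u-n^{-1}\sum_{i=1}^n g(z_i)\right|\le L/((1+\lambda)n^\lambda)$.

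There is no genuine obstacle here; the only point needing a moment's care is that the grid points $z_i=i/n$ are the \emph{right} endpoints of the partition intervals $[(i-1)/n,i/n]$, so the second statement is proved from the right-endpoint variant rather than the stated left-endpoint form of the first estimate. Both variants have identical one-line proofs, so this is purely bookkeeping and need not be stated separately.
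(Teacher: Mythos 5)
Your proof is correct and follows essentially the same route as the paper: subtract a constant inside the integral, bound pointwise via Hölder continuity, then sum over the partition. You are also right that the second claim requires the right-endpoint variant of the first estimate (since $z_i = i/n$ is the right endpoint of $[(i-1)/n, i/n]$); the paper's proof invokes ``the first part'' somewhat loosely at this point, so your explicit observation is a welcome clarification, though the underlying computation is the same.
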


\begin{proof}
For the first  statement we consider
\begin{align*}
\left|\int_z^{z+h}g(u)\,\dd u - hg(z)\right| & = \left|\int_z^{z+h}(g(u)-g(z))\,\dd u\right| \\
& \leq \int_z^{z+h}|g(u)-g(z)|\,\dd u \leq \int_z^{z+h}L(u-z)^{\lambda}\,\dd u  = \frac{Lh^{1+\lambda}}{1+\lambda}.
\end{align*}
For the second one we have, using the first part,
\begin{multline*}
\left|\int_0^{1}g(u)\,\dd u - \frac{1}{n}\sum_{i=1}^ng(z_i)\right| =\left|\sum_{i=1}^n\int_{\frac{i-1}{n}}^{\frac{i}{n}}(g(u) - g(z_i))\,\dd u\right| \\
 \leq \sum_{i=1}^n\left|\int_{\frac{i-1}{n}}^{\frac{i}{n}}(g(u) - g(z_i))\,\dd u\right|  \leq \frac{L}{(1+\lambda)n^\lambda}.
\end{multline*}
The proof is completed.
\end{proof}

\begin{lemma}
\label{lemma0}
Let the conditions of Theorem \ref{mainthm} hold and assume $b_0=0$. Then  
\begin{equation*}
\mathbb{P}_{0,s_0}\left(\sup_{f_s\in\mathcal{F}_{s_0,\widetilde{\varepsilon}_n}}\left| \frac{1}{n} \sum_{i=1}^n \mathcal{W}_i f_s(z_{i}) \right|\geq\delta_n\right) \lesssim \frac{1}{n^{\lambda}{\widetilde{\varepsilon}_n}^2},
\end{equation*}
where $\mathcal{F}_{s_0,\widetilde{\varepsilon}_n}=\{   f_s:\|s-s_0\|_{\infty}<\widetilde{\varepsilon}_n   \}=\{f_s: s \in V_{s_0,\widetilde{\varepsilon}_n}\},$ and $\delta_n$ is an arbitrary sequence of positive numbers, such that $\delta_n \asymp {\widetilde{\varepsilon}_n}^2.$ In particular, as $n\rightarrow\infty,$ the probability on the lefthand side of the above display converges to zero.
\end{lemma}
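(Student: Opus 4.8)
The plan is to control the supremum of the random linear functional $f \mapsto \frac1n\sum_{i=1}^n \mathcal{W}_i f_s(z_i)$ over the class $\mathcal{F}_{s_0,\widetilde\varepsilon_n}$ by a chaining/entropy argument combined with a variance bound and Chebyshev's (or Markov's) inequality, exploiting that the $\mathcal{W}_i$ are i.i.d.\ with mean zero and variance two under $\mathbb{P}_{0,s_0}$. First I would record the pointwise bound on $f_s$: for $s\in V_{s_0,\widetilde\varepsilon_n}$, using the Cauchy--Schwarz estimate already carried out in the proof of Lemma~\ref{lemma1}, one has $|f_s(z_i)|\lesssim \widetilde\varepsilon_n$ uniformly in $i$, and more precisely, writing $g_s = s_0^2 - s^2$, the quantity $f_s(z_i) = \frac{\int_{z_i}^{z_i+1/n} g_s(u)\,\dd u}{\int_{z_i}^{z_i+1/n} s^2(u)\,\dd u}$ is, up to the bounded denominator, an average of $g_s$ over a small interval, so $\|f_s\|_{Q_n}^2 \lesssim \|s^2-s_0^2\|_{Q_n}^2 \lesssim \widetilde\varepsilon_n^2$ as well. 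This gives, for a fixed $s$, $\operatorname{Var}_{0,s_0}\!\big(\frac1n\sum \mathcal{W}_i f_s(z_i)\big) = \frac{2}{n^2}\sum_i f_s^2(z_i) = \frac{2}{n}\|f_s\|_{Q_n}^2 \lesssim \widetilde\varepsilon_n^2/n$.

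Next I would pass from a single $s$ to the supremum over $\mathcal{F}_{s_0,\widetilde\varepsilon_n}$. The key point is that $\mathcal{F}_{s_0,\widetilde\varepsilon_n}$ is, through the map $s\mapsto f_s$, the image of a subset of the $N_n$-dimensional set of piecewise-constant functions on the bins $B_k$ with coefficients $\xi_k\in[\kappa,\mathcal{K}]$ and $|\xi_k - s_0(a_{k-1})|<\widetilde\varepsilon_n + O((m_n/n)^\lambda)$. Hence a covering-number bound of the form $N(\eta, \mathcal{F}_{s_0,\widetilde\varepsilon_n}, \|\cdot\|_{Q_n}) \lesssim (\widetilde\varepsilon_n/\eta)^{N_n}$ holds (a grid of meshwidth proportional to $\eta$ in each of the $N_n$ coordinates), so $\log N(\eta,\cdot) \lesssim N_n \log(\widetilde\varepsilon_n/\eta)$. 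I would then use a standard maximal inequality: either a one-step discretisation (cover $\mathcal{F}$ at level $\eta_n \asymp n^{-C}$, bound the max over the finite net by a union bound plus the variance estimate, and control the oscillation of the functional over each cell using $|\mathcal{W}_i|$ summed — which is $O(n)$ in $L^1$ — times $\eta_n$), or a subgaussian chaining bound on $\frac1n\sum \mathcal{W}_i f_s(z_i)$ after truncating the $\mathcal{W}_i$. The resulting bound is
\[
\mathbb{E}_{0,s_0}\Big[\sup_{f_s\in\mathcal{F}_{s_0,\widetilde\varepsilon_n}}\Big|\tfrac1n\sum_{i=1}^n \mathcal{W}_i f_s(z_i)\Big|^2\Big] \lesssim \frac{\widetilde\varepsilon_n^2}{n}\, N_n \log n + (\text{lower order}),
\]
and Markov's inequality with $\delta_n\asymp\widetilde\varepsilon_n^2$ then yields a bound of order $\frac{N_n\log n}{n\,\widetilde\varepsilon_n^2}$. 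Recalling $N_n\asymp n^\alpha$ with $\alpha\le 1-\lambda/2$ and $\widetilde\varepsilon_n\asymp n^{-\lambda/4}\log^\gamma n$, one checks $\frac{N_n\log n}{n\widetilde\varepsilon_n^2} \asymp \frac{n^\alpha\log n}{n\cdot n^{-\lambda/2}\log^{2\gamma}n} = n^{\alpha - 1 + \lambda/2}\log^{1-2\gamma} n \lesssim \frac{1}{n^\lambda\widetilde\varepsilon_n^2}$ for the stated choices (and in any case tends to $0$ since $\gamma>1$ and $\alpha\le 1-\lambda/2$); I would reconcile the exponents to land exactly on the claimed $\frac{1}{n^\lambda\widetilde\varepsilon_n^2}$, absorbing logarithmic factors into the $\log^\gamma n$ in $\widetilde\varepsilon_n$.

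The main obstacle is getting the entropy/maximal-inequality step clean enough that the final bound really is $O(1/(n^\lambda\widetilde\varepsilon_n^2))$ rather than merely $o(1)$: this requires being careful that the discretisation error term (max of $\frac1n\sum|\mathcal{W}_i|$ times mesh size) is genuinely negligible, which forces the net to be fine (mesh $\asymp n^{-K}$ for large $K$), and then the $\log N(\eta_n,\cdot) \asymp N_n\log n$ term must be shown to be dominated after division by $n\widetilde\varepsilon_n^2$. A secondary technical point is that the $\mathcal{W}_i = 1 - Y_{i,n}^2/\!\int s_0^2$ are not bounded (they are $1$ minus a scaled $\chi^2_1$), so for a subgaussian maximal inequality one either truncates them (and bounds the contribution of the tails using $\mathbb{E}\mathcal{W}_i^2<\infty$, in fact all moments are finite and bounded uniformly in $n$ since the law of $\mathcal{W}_i$ does not depend on $n$) or works directly with a second-moment/Chebyshev argument over the finite net, which is simplest and suffices here. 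I would take the latter route: bound $\mathbb{E}_{0,s_0}[\sup_{\text{net}} |\cdot|^2]$ by $(\#\text{net})\cdot\max_{\text{net}}\mathbb{E}[|\cdot|^2]$, which is crude but, because $\#\text{net} = e^{O(N_n\log n)}$ kills the gain, actually I would instead use a union bound at the level of the tail probability together with an exponential (Bernstein-type) deviation bound for $\frac1n\sum\mathcal{W}_i f_s(z_i)$ valid for each fixed $s$ — the $\mathcal{W}_i$ being sub-exponential with uniformly bounded parameters — so that $\log(\#\text{net}) \asymp N_n\log n \ll n\delta_n^2/\widetilde\varepsilon_n^2 \asymp n\widetilde\varepsilon_n^2$ makes the union bound harmless, and the discretisation error is handled separately as above. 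This is the delicate bookkeeping the proof will have to spell out.
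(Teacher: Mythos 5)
Your proposal ultimately lands on the right idea — control the empirical process over a finite net using a Bernstein-type deviation bound for the sub-exponential $\mathcal{W}_i$, then union over the net (cardinality $e^{O(N_n\log n)}$), then absorb the oscillation over each cell — but it presents two routes and the first one does not actually work. The bound $\exz\bigl[\sup_{f_s}|\tfrac1n\sum\mathcal{W}_i f_s(z_i)|^2\bigr]\lesssim \tfrac{\widetilde\varepsilon_n^2}{n}N_n\log n$, followed by Markov with $\delta_n\asymp\widetilde\varepsilon_n^2$, gives $\tfrac{N_n\log n}{n\widetilde\varepsilon_n^2}\asymp n^{\alpha-1+\lambda/2}\log^{1-2\gamma}n$. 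Comparing against the target $\tfrac{1}{n^\lambda\widetilde\varepsilon_n^2}\asymp n^{-\lambda/2}\log^{-2\gamma}n$, this would require $\alpha\le 1-\lambda$, whereas the theorem allows $\alpha$ up to $1-\lambda/2$; the mismatch is polynomial in $n$, so it cannot be ``reconciled by absorbing logs''. You correctly sense the problem and pivot, but the write-up never conclusively discards the failed route. Your second route — union bound at the tail-probability level with a sub-exponential Bernstein inequality, exploiting $\log(\#\mathrm{net})\asymp N_n\log n \ll n\widetilde\varepsilon_n^2$ (which holds since $\alpha\le 1-\lambda/2$ and $\gamma>1$), plus a Markov bound on the oscillation $\tfrac1n\sum|\mathcal{W}_i|\cdot\eta_n$ for a fine mesh — is sound and would indeed deliver the conclusion.

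The paper's proof is conceptually the same chaining idea, but packaged via Corollary 8.8 of van de Geer (2000). The structural difference worth noting: the paper first replaces $f_s(z_i)=n\int_{z_i}^{z_i+1/n}g_s(u)\,\dd u$ by the pointwise $g_s(z_i)=(s_0^2(z_i)-s^2(z_i))/s^2(z_i)$, using H\"older continuity of $s_0$ to control $|f_s(z_i)-g_s(z_i)|\le 2\mathcal{K}L/(\kappa^2 n^{\lambda})$ uniformly. This substitution produces a remainder $\rho_n\asymp n^{-\lambda}\cdot\tfrac1n\sum|\mathcal{W}_i|$, and the Markov bound on $\ppz(\rho_n\ge\delta_n/2)$ is what generates exactly the stated rate $1/(n^{\lambda}\widetilde\varepsilon_n^2)$; the surviving empirical process over $\mathcal{G}_{s_0,\widetilde\varepsilon_n}$ is then exponentially small and hence negligible. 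In your scheme the remainder is an artefact of a discretisation mesh $\eta_n\asymp n^{-K}$ you are free to choose, so you would get $1/(n^{K}\widetilde\varepsilon_n^2)$ — strictly smaller than claimed, which is fine for proving the lemma but means you are not identifying the natural source of the $n^{-\lambda}$ factor. If you commit to the Bernstein/union-bound route from the outset and drop the $\ee[\sup^2]$-plus-Markov detour, your argument is a valid and arguably more self-contained alternative to the paper's reliance on van de Geer's bracketing-entropy maximal inequality.
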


\begin{proof}
Introduce
\begin{equation*}
g_s(z)=\frac{s_0^2(z)-s^2(z)}{s^2(z)}, \quad \mathcal{G}_{s_0,\widetilde{\varepsilon}_n}=\{g_s: \|s-s_0\|_{\infty}<\widetilde{\varepsilon}_n \}.
\end{equation*}
The function $g_s$ approximates $f_s$  in the following sense:
\begin{equation}
\label{approx*}
|f_s(z_i)-g_s(z_i)|\leq \frac{2\mathcal{K}L}{\kappa^2n^\lambda}, \quad i=1,\ldots,n,
\end{equation}
which can be seen as follows. Every interval $[z_i,z_i+1/n)$ is contained in some bin $B_k,$ so that $s^2$ is constant on $[z_i,z_i+1/n)$. Hence one obtains $f_s(z_i)=n\int_{z_i}^{z_i+1/n}g_s(u)\,\dd u$. It follows from Definition \ref{classX} and boundedness and H\"{o}lder continuity of $s_0$ in Assumption~\ref{standing} that
\[
|g_s(u)-g_s(v)|\leq \frac{2\mathcal{K}L}{\kappa^2}|u-v|^\lambda.
\]
Hence equation~\eqref{approx*} follows from Lemma~\ref{lemma:lipschitz}. Note that the righthand side of \eqref{approx*} is uniform in $s$.

Let
\[
\rho_n=\frac{2\mathcal{K}L}{\kappa^2n^{1+\lambda}}\sum_{i=1}^n|\mathcal{W}_i|.
\]
Then
\begin{equation*}
\sup_{f_s\in\mathcal{F}_{s_0,\widetilde{\varepsilon}_n}}\left| \frac{1}{n} \sum_{i=1}^n \mathcal{W}_i f_s(z_i) \right| \leq \sup_{g_s\in\mathcal{G}_{s_0,\widetilde{\varepsilon}_n}}\left| \frac{1}{n} \sum_{i=1}^n \mathcal{W}_i g_s(z_i) \right| + \rho_n.
\end{equation*}
Hence,
\begin{align*}
\lefteqn{\mathbb{P}_{0,s_0}\left(\sup_{f_s\in\mathcal{F}_{s_0,\widetilde{\varepsilon}_n}}\left| \frac{1}{n} \sum_{i=1}^n \mathcal{W}_i f_s(z_{i}) \right|\geq\delta_n\right)}\\
& \qquad \leq \mathbb{P}_{0,s_0}\left(\sup_{g_s\in\mathcal{G}_{s_0,\widetilde{\varepsilon}_n}}\left| \frac{1}{n} \sum_{i=1}^n \mathcal{W}_i g_s(z_{i}) \right|+\rho_n\geq\delta_n\right) \\
& \qquad \leq \mathbb{P}_{0,s_0}\left(\sup_{g_s\in\mathcal{G}_{s_0,\widetilde{\varepsilon}_n}}\left| \frac{1}{n} \sum_{i=1}^n \mathcal{W}_i g_s(z_{i}) \right|\geq\delta_n/2\right) + \mathbb{P}_{0,s_0}(\rho_n\geq \delta_n/2).
\end{align*}

The Markov inequality gives
\[
\mathbb{P}_{0,s_0}(\rho_n\geq\delta_n/2)\lesssim \frac{1}{n^{\lambda}\delta_n}= \frac{1}{n^{\lambda}{\widetilde{\varepsilon}_n}^2}.
\] 
\rm
Therefore, in order to prove the lemma, it is enough to show that\begin{equation}\label{eq:G}
\mathbb{P}_{0,s_0}\left(\sup_{g_s\in\mathcal{G}_{s_0,\widetilde{\varepsilon}_n}}\left| \frac{1}{n} \sum_{i=1}^n \mathcal{W}_i g_s(z_{i}) \right|\geq\delta_n\right)\lesssim \frac{1}{n^{\lambda}{\widetilde{\varepsilon}_n}^2}.
\end{equation}
We will apply Corollary 8.8 from \cite{geer00} to show that \eqref{eq:G} holds true, since the assertion of that corollary, inequality~(8.30), implies that  
\begin{equation}\label{eq:830}
\mathbb{P}_{0,s_0}\left(\sup_{g_s\in\mathcal{G}_{s_0,\widetilde{\varepsilon}_n}}\left| \frac{1}{n} \sum_{i=1}^n \mathcal{W}_i g_s(z_{i}) \right|\geq\delta_n\right) \leq \tilde c_1\exp(-\tilde c_2\,n\widetilde{\varepsilon}_n^2),
\end{equation}
for some positive constants $\tilde c_1,\tilde c_2$. The righthand side of the above display is asymptotically much smaller than $1/(n^{\lambda}{\widetilde{\varepsilon}_n}^2).$

Application of the mentioned corollary amounts to verification of formulae (8.23)--(8.29) in \cite{geer00}. Exactly as in \cite{gugu16}, conditions (8.23)--(8.27), (8.29) can be satisfied by choosing $R_n=2\mathcal{K}\widetilde{\varepsilon}_n/\kappa^2,$ $K_1=3,$ $\sigma_0^2=2 \ee_{0,s_0}\left[ \mathcal{W}_i^2 e^{|\mathcal{W}_i|/3} \right],$ $K_2=2\mathcal{K}\widetilde{\varepsilon}_n/\kappa^2,$ $C_1=3,$ $K=4K_1 K_2,$ and $C_0=2C,$ with $C$ a universal constant as in Corollary 8.8 in \cite{geer00}. Finally, we need to verify formula
(8.28) in \cite{geer00},
\begin{equation}
\label{8.28}
\sqrt{n}\delta_n \geq C_0 \left( \int_{\delta_n/2^6}^{\sqrt{2}R_n\sigma_0} H_B^{1/2}\left( \frac{u}{\sqrt{2}\sigma_0},\mathcal{G}_{s_0,\widetilde{\varepsilon}_n},Q_n\right) \,\dd u \vee \sqrt{2} R_n \sigma_0 \right).
\end{equation}
Here $H_B\left( \delta,\mathcal{G}_{s_0,\widetilde{\varepsilon}_n},Q_n\right)$ is the $\delta$-entropy with bracketing of $\mathcal{G}_{s_0,\widetilde{\varepsilon}_n}$ for the $L_2(Q_n)$-metric (see Definition 2.2 in \cite{geer00}).

First of all, note that
\begin{equation*}
\sqrt{n}\delta_n \asymp \sqrt{n} \widetilde{\varepsilon}_n^2 \gg R_n \asymp \widetilde{\varepsilon}_n,
\end{equation*}
holds, since we have $n\widetilde{\varepsilon}_n^2\rightarrow\infty.$ Thus it suffices to show
\begin{equation*}
\sqrt{n}\delta_n \geq C_0 \int_{\delta_n/2^6}^{\sqrt{2}R_n\sigma_0} H_B^{1/2}\left( \frac{u}{\sqrt{2}\sigma_0},\mathcal{G}_{s_0,\widetilde{\varepsilon}_n},Q_n\right) \,\dd u.
\end{equation*}
In order to do this, we will upper bound the righthand side by first upper bounding the integrand with a manageable and simple expression, and thereby we obtain a bound on the integral itself. We will use $H_{\infty}$, the entropy for the supremum norm (see Definition 2.3 in \cite{geer00}) and the inequality $H_B(\delta,\mathcal{G},Q)\leq H_\infty(\delta/2,\mathcal{G})$, valid for any collection of functions $\mathcal{G}$ and a probability measure $Q$, see Lemma 2.1 in \cite{geer00}, to obtain
\begin{align*}
\lefteqn{\int_{\delta_n/2^6}^{\sqrt{2}R_n\sigma_0} H_B^{1/2}\left( \frac{u}{\sqrt{2}\sigma_0},\mathcal{G}_{s_0,\widetilde{\varepsilon}_n},Q_n\right) \,\dd u} \\
 & \qquad\leq \int_{\delta_n/2^6}^{\sqrt{2}R_n\sigma_0} H_{\infty}^{1/2}\left( \frac{u}{2\sqrt{2}\sigma_0},\mathcal{G}_{s_0,\widetilde{\varepsilon}_n}\right) \,\dd u\\
&\qquad = 2\sqrt{2}\sigma_0 \int_{0}^{R_n/2-\delta_n/(2^7\sqrt{2}\sigma_0)} H_{\infty}^{1/2}\left( u + \frac{\delta_n}{2^7\sqrt{2}\sigma_0},\mathcal{G}_{s_0,\widetilde{\varepsilon}_n}\right) \,\dd u\\
&\qquad\leq 2\sqrt{2}\sigma_0  \int_{0}^{R_n} H_{\infty}^{1/2}\left( u + \frac{\delta_n}{2^7\sqrt{2}\sigma_0},\mathcal{G}_{s_0,\widetilde{\varepsilon}_n}\right) \,\dd u.
\end{align*}
We will now estimate the entropy in the last integral in the above display. Suppose $u>0$ is fixed. For every $g\in\mathcal{G}_{s_0,\widetilde{\varepsilon}_n}$ construct an approximating function
\begin{equation*}
\widetilde{g}=\sum_{k=1}^{N_n} u \left\lfloor \frac{g(a_{k-1})}{u} \right\rfloor 1_{B_k}.
\end{equation*}
The quality of approximation can be assessed as follows: we have
\begin{align*}
\|g-\widetilde{g}\|_{\infty}&=\max_k \|(g-\widetilde{g})1_{B_k}\|_{\infty}\\
&=\max_k\sup_{x\in B_k} \left|g(x)-g(a_{k-1})+u\frac{g(a_{k-1})}{u}-u \left\lfloor \frac{g(a_{k-1})}{u} \right\rfloor\right|\\
&\leq\max_k\sup_{x\in B_k} |g(x)-g(a_{k-1})|+u.
\end{align*}
Since $s^2$ is constant on $B_k$ and $s_0^2$ is H\"{o}lder, while $B_k$ is of length at most $2m_n/n$, we have
\begin{align*}
\sup_{x\in B_k} |g(x)-g(a_{k-1})|&=\sup_{x\in B_k}\left| \frac{ s_0^2(x) - s_0^2(a_{k-1}) }{s^2(a_{k-1})} \right|\\
&\leq \frac{2\mathcal{K}L}{\kappa^2} \left( \frac{2m_n}{n} \right)^\lambda,
\end{align*}
so that
\begin{equation}
\label{8*}
\|g-\widetilde{g}\|_{\infty} \leq u + \frac{2\mathcal{K}L}{\kappa^2} \left( \frac{2m_n}{n} \right)^\lambda.
\end{equation}
According to our assumptions,
\begin{equation}
\label{bound*}
\left( \frac{m_n}{n} \right)^\lambda \ll \delta_n,
\end{equation}
and hence for any pair of positive constants $A$ and $B$ one eventually has
\begin{equation*}
A\left( \frac{2m_n}{n} \right)^\lambda \leq B\delta_n.
\end{equation*}
This implies that for all $n$ large enough,
\begin{equation*}
H^{1/2}_{\infty}\left( u + \frac{\delta_n}{2^7\sqrt{2}\sigma_0},\mathcal{G}_{s_0,\widetilde{\varepsilon}_n}\right) \leq H^{1/2}_{\infty}\left( u + \frac{2\mathcal{K}L}{\kappa^2} \left( \frac{2m_n}{n} \right)^\lambda  ,\mathcal{G}_{s_0,\widetilde{\varepsilon}_n}\right).
\end{equation*}
The entropy on the righthand side of the above display can be estimated by bounding the corresponding covering number of $\mathcal{G}_{s_0,\widetilde{\varepsilon}_n}$, which can be achieved by counting the number of different $\widetilde{g}$'s. In fact, as we shall see below, although $\mathcal{G}_{s_0,\widetilde{\varepsilon}_n}$ is an infinite set, the number of different $\widetilde{g}$'s is finite and can by easily estimated from above.

Firstly, recall that $\|g\|_{\infty}\leq (2\mathcal{K}/\kappa^2)\widetilde{\varepsilon}_n$ for $g\in \mathcal{G}_{s_0,\widetilde{\varepsilon}_n}$. This implies that there are at most \begin{equation*}
\left\lfloor \frac{4\mathcal{K}}{\kappa^2} \frac{\widetilde{\varepsilon}_n}{u} \right\rfloor+1
\end{equation*}
possible values for $\widetilde{g}(a_0).$ Next, by the triangle inequality and \eqref{8*},
\begin{align*}
|\widetilde{g}(a_k)-\widetilde{g}(a_{k-1})|&\leq |\widetilde{g}(a_k)-g(a_k)|+|g(a_k)-g(a_{k-1})|+|\widetilde{g}(a_{k-1})-g(a_{k-1})|\\
&\leq 2u+  \frac{4\mathcal{K}}{\kappa^2}  L \left( \frac{2m_n}{n} \right)^\lambda+2\|g\|_{\infty}.
\end{align*}
Thus, once $\widetilde{g}(a_{k-1})$ has been determined, $\widetilde{g}(a_k)$ can take at most
\begin{equation*}
\left\lfloor \frac{1}{u} \left(4u+ \frac{8\mathcal{K}}{\kappa^2}  L \left( \frac{2m_n}{n} \right)^\lambda+4\|g\|_{\infty} \right) \right\rfloor+1
\end{equation*}
values. Therefore, in total there can be at most
\begin{equation*}
\left( \left\lfloor \frac{4\mathcal{K}}{\kappa^2} \frac{\widetilde{\varepsilon}_n}{u} \right\rfloor  +1 \right) \left( \left\lfloor  4+ \frac{8\mathcal{K}}{\kappa^2}  \frac{L}{u} \left( \frac{2m_n}{n} \right)^\lambda+\frac{8\mathcal{K}}{\kappa^2} \frac{\widetilde{\varepsilon}_n}{u}  \right\rfloor+1 \right)^{N_n-1}
\end{equation*}
different $\widetilde{g}$'s, which yields an upper bound on the covering number of the set $\mathcal{G}_{s_0,\widetilde{\varepsilon}_n}.$ Using \eqref{bound*}, for large $n$, the logarithm of the above display is bounded by
\begin{equation*}
N_n \log\left( \textrm{const}+ \textrm{const} \frac{\widetilde{\varepsilon}_n}{u} \right)
\end{equation*}
for some constant $\textrm{const}>0$ independent of $n$ and $u.$ This expression gives an upper bound on the entropy of the set $\mathcal{G}_{s_0,\widetilde{\varepsilon}_n}.$ Inserting this upper bound into the entropy integral, we get that for all $n$ large enough,
\begin{align*}
\int_{0}^{R_n} H_{\infty}^{1/2}\left( u + \frac{\delta_n}{2^7\sqrt{2}\sigma_0},\mathcal{G}_{s_0,\widetilde{\varepsilon}_n}\right) \,\dd u & \leq \sqrt{N_n} \int_{0}^{R_n} \log^{1/2}\left( \textrm{const}+ \textrm{const} \frac{\widetilde{\varepsilon}_n}{u} \right) \,\dd u\\
&=\widetilde{\varepsilon}_n \sqrt{N_n} \int_{0}^{2\mathcal{K}/\kappa^2}  \log^{1/2}\left( \textrm{const}+ \textrm{const} \frac{1}{u} \right) \,\dd u \\
&\asymp \widetilde{\varepsilon}_n \sqrt{N_n},
\end{align*}
since the integral in the second line is convergent, because the integrand is bounded by a constant times $u^{-1/2},$ and the latter function is integrable in the neighbourhood of zero.

Summarising the above intermediate calculations, we obtain that in order to prove (8.28) in \cite{geer00}, we need 
\begin{equation}\label{eq:delta}
\sqrt{n}\delta_n \gg \widetilde{\varepsilon}_n \sqrt{N_n}
\end{equation}
and \eqref{bound*} to hold. Both are satisfied with our choice of $\widetilde{\varepsilon}_n$ and $N_n$ (equivalently, $m_n$). Hence all conditions of Corollary~8.8 in \cite{geer00} are satisfied and \eqref{eq:830} follows.
This completes the proof.
\end{proof}
The next lemma, to be used in the proof of Lemma~\ref{lemma2}, is an analogue of Lemma~A.4 in \cite{gugu16}. Its proof is also similar in structure, but differs in details.

\begin{lemma}
\label{lemma3} Let the conditions of Theorem \ref{mainthm} hold and assume $b_0=0$. 

There exist two positive constants $\widetilde{c}_0$ and $\widetilde{C}_0$, depending on $\kappa$, $\mathcal{K}$ and $L$ only, 
\rm
such that for all $n$ large enough and all $s\in A_{j,\varepsilon_n},$ $j=0,1,\ldots,M_{\varepsilon_n},$ we have
\begin{equation*}
\sum_{i=1}^n \exz[Z_{i,n,s}(Y_{i,n})] \leq - \frac{\widetilde{c}_0\kappa^2}{\mathcal{K}^4}2^j\varepsilon_n^2 n +\widetilde{C}_0n^{1-\lambda}.
\end{equation*}
\end{lemma}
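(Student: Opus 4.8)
The plan is to express $\exz[Z_{i,n,s}(Y_{i,n})]$ as the negative of a Kullback--Leibler divergence between two centred Gaussians and then bound it from above by a negative quadratic. Since $b_0=0$, under $\mathbb{P}_{0,s_0}$ the increment $Y_{i,n}$ is exactly $N(0,v_{i,0})$ with $v_{i,0}=\int_{t_{i-1,n}}^{t_{i,n}}s_0^2(u)\,\dd u$, while $p_{i,n,s}$ is the $N(0,v_{i,s})$ density with $v_{i,s}=\int_{t_{i-1,n}}^{t_{i,n}}s^2(u)\,\dd u$. A direct computation gives
\[
\exz[Z_{i,n,s}(Y_{i,n})]=-\tfrac12\bigl(\rho_i-1-\log\rho_i\bigr),\qquad \rho_i:=v_{i,0}/v_{i,s}.
\]
By Assumption~\ref{standing} we have $\kappa^2/\mathcal{K}^2\le\rho_i\le\mathcal{K}^2/\kappa^2$, so the $\rho_i$ lie in a fixed compact subinterval of $(0,\infty)$ on which $\rho-1-\log\rho\ge c_0(\rho-1)^2$ for a constant $c_0=c_0(\kappa,\mathcal{K})>0$; combining this with $v_{i,s}\le\mathcal{K}^2/n$ yields the uniform bound
\[
\exz[Z_{i,n,s}(Y_{i,n})]\le-\frac{c_0}{2}\Bigl(\frac{v_{i,0}-v_{i,s}}{v_{i,s}}\Bigr)^2\le-\frac{c_0 n^2}{2\mathcal{K}^4}(v_{i,0}-v_{i,s})^2 .
\]

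Next I would relate $\sum_{i=1}^n(v_{i,0}-v_{i,s})^2$ to $\|s^2-s_0^2\|_2^2$. Each grid interval $[t_{i-1,n},t_{i,n})$ lies inside a single bin $B_k$, on which $s^2\equiv\xi_k^2$, so $v_{i,s}=\xi_k^2/n$ and $v_{i,0}-v_{i,s}=\int_{t_{i-1,n}}^{t_{i,n}}(s_0^2(u)-\xi_k^2)\,\dd u$. As $s_0^2$ is H\"older of order $\lambda$ with constant at most $2\mathcal{K}L$, Lemma~\ref{lemma:lipschitz} gives $v_{i,0}=\tfrac1n s_0^2(t_{i-1,n})+O(n^{-1-\lambda})$ uniformly in $i$ and $s$, whence
\[
(v_{i,0}-v_{i,s})^2\ge\frac1{n^2}\bigl(s_0^2(t_{i-1,n})-\xi_k^2\bigr)^2-O(n^{-2-\lambda}),
\]
the implied constant depending only on $\kappa,\mathcal{K},L$ since $|s_0^2-\xi_k^2|\le\mathcal{K}^2-\kappa^2$. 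Summing over $i$ and applying Lemma~\ref{lemma:lipschitz} once more, bin by bin, to the piecewise-H\"older function $u\mapsto(s_0^2(u)-s^2(u))^2$ (whose H\"older constant on each bin is bounded uniformly in $s$, and whose accumulated Riemann error over all bins is $O(n^{-\lambda})$ because bin lengths sum to $1$), I obtain
\[
\sum_{i=1}^n(v_{i,0}-v_{i,s})^2\ge\frac1n\|s^2-s_0^2\|_2^2-O(n^{-1-\lambda}),
\]
with a constant independent of $s$ and $j$.

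Combining the two displays gives $\sum_{i=1}^n\exz[Z_{i,n,s}(Y_{i,n})]\le-\frac{c_0}{2\mathcal{K}^4}n\|s^2-s_0^2\|_2^2+O(n^{1-\lambda})$. Finally, the inequality $\|s^2-s_0^2\|_2\ge\kappa\|s-s_0\|_2$ (valid under Assumption~\ref{standing}) together with $\|s-s_0\|_2^2\ge2^j\varepsilon_n^2$ for $s\in A_{j,\varepsilon_n}$ yields $\|s^2-s_0^2\|_2^2\ge\kappa^2 2^j\varepsilon_n^2$, and hence
\[
\sum_{i=1}^n\exz[Z_{i,n,s}(Y_{i,n})]\le-\frac{\widetilde c_0\kappa^2}{\mathcal{K}^4}2^j\varepsilon_n^2 n+\widetilde C_0 n^{1-\lambda}
\]
with $\widetilde c_0=c_0/2$ and $\widetilde C_0$ the accumulated $O(1)$ constant, both depending only on $\kappa,\mathcal{K},L$; this holds for all large $n$ and all $j\le M_{\varepsilon_n}$ simultaneously. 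The only genuinely delicate point is the middle step: one must ensure that the discrepancy between $\sum_i(v_{i,0}-v_{i,s})^2$ and $n^{-1}\|s^2-s_0^2\|_2^2$ is of order $n^{-1-\lambda}$ \emph{uniformly} over the infinite family $A_{j,\varepsilon_n}$ and over $j$, so that after multiplication by $n^2$ the residual stays $O(n^{1-\lambda})$; the Gaussian Kullback--Leibler computation and the concluding inequalities are routine.
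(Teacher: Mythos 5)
Your proof is correct and takes essentially the same route as the paper's: you identify $\exz[Z_{i,n,s}(Y_{i,n})]$ as the negative Gaussian Kullback--Leibler divergence $-\tfrac12(\rho_i-1-\log\rho_i)$ (the paper writes the same quantity as $\tfrac12[\log(1+f_s(z_i))-f_s(z_i)]$ with $f_s(z_i)=\rho_i-1$), apply the elementary quadratic lower bound for $\rho-1-\log\rho$ on the compact range $[\kappa^2/\mathcal{K}^2,\mathcal{K}^2/\kappa^2]$, and then use Lemma~\ref{lemma:lipschitz} bin by bin to compare the resulting Riemann sum to $\|s^2-s_0^2\|_2^2$ with a uniform $O(n^{-\lambda})$ discretisation error. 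The only cosmetic difference is that you peel off the factor $1/v_{i,s}\le n/\kappa^2$ (and use $v_{i,s}\le\mathcal{K}^2/n$) before the Riemann-sum comparison, whereas the paper keeps the ratio intact and compares $\tfrac1n\sum_i f_s(z_i)^2$ to $\int_0^1(s_0^2-s^2)^2/s^4\,\dd u$ via the same lemma; both orderings give the same final estimate.
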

\begin{proof}
Below we use the following elementary inequality: for any fixed constant $C>0$ there exists another constant $\widetilde{c}_0>0$, such that for $x\in (-1,C)$, $\log(1+x)-x\leq - \widetilde{c}_0 x^2$ holds.

It follows from Assumption~\ref{standing} that 
\[
\frac{ \int_{[z_i,z_{i+1)}} [s_0^2(u)-s^2(u)]\,\dd u }{ \int_{[z_i,z_{i+1)}} s^2(u)\,\dd u }\leq \frac{\mathcal{K}^2}{\kappa^2}-1=:C>0.
\]
Hence,  for some $\widetilde{c}_0>0$,
\begin{align*}
\exz[Z_{i,n,s}(Y_{i,n})] &=\frac{1}{2}\log\left( 1 + \frac{ \int_{[z_i,z_{i+1)}} [s_0^2(u)-s^2(u)]\,\dd u }{ \int_{[z_i,z_{i+1)}} s^2(u)\,\dd u } \right)\\
&-\frac{1}{2}\frac{ \int_{[z_i,z_{i+1)}} [s_0^2(u)-s^2(u)]\,\dd u }{ \int_{[z_i,z_{i+1)}} s^2(u)\,\dd u } \\
&\leq - \frac{ \widetilde{c}_0  }{2}  \left\{  \frac{ \int_{[z_i,z_{i+1)}} [s_0^2(u)-s^2(u)]\,\dd u }{ \int_{[z_i,z_{i+1)}} s^2(u)\,\dd u } \right\}^2.
\end{align*}
We now focus on the term in braces.
Let $g(u)=\frac{ (s^2(u)-s^2_0(u))^2 }{s^4(u)}$. On bins, and hence on intervals $[z_i,z_{i+1})$, the function $s$ is constant and positive, and by H\"older continuity and boundedness of $s_0$ also $g$ is H\"older continuous. Application of Lemma~\ref{lemma:lipschitz} gives 
\begin{equation}\label{eq:l1}
\int_{[z_i,z_{i+1)}} \frac{ (s^2(u)-s^2_0(u))^2 }{s^4(u)}\,\dd u =\frac{1}{n}\frac{1}{s^4(z_i)}(s_0^2(z_i)-s^2(z_i))^2+O\left(\frac{1}{n^{1+\lambda}}\right).
\end{equation}
Next, by a similar argument,
\begin{equation}\label{eq:l2}
\frac{ \int_{ [z_i,z_{i+1)}} [s^2(u)-s^2_0(u)] \,\dd u}{  \int_{ [z_i,z_{i+1)}} s^2(u)\,\dd u } = \frac{s^2(z_i)-s_0^2(z_i)}{s^2(z_i)}+O\left(\frac{1}{n^\lambda}\right).
\end{equation}
Combination of \eqref{eq:l1} and \eqref{eq:l2} and the bounds in Assumption~\ref{standing} yields
\[
\left\{ \frac{ \int_{ [z_i,z_{i+1)}} [s^2(u)-s^2_0(u)] \,\dd u}{  \int_{ [z_i,z_{i+1)}} s^2(u)\,\dd u }  \right\}^2 - n\int_{[z_i,z_{i+1)}} \frac{ (s^2(u)-s^2_0(u))^2 }{s^4(u)}\,\dd u
=O\left(\frac{1}{n^\lambda}\right).
\]
Consequently, by summation,
\begin{equation}\label{two*}
\frac{1}{n}\sum_{i=1}^n\left\{ \frac{ \int_{ [z_i,z_{i+1)}} [s^2(u)-s^2_0(u)] \,\dd u}{  \int_{ [z_i,z_{i+1)}} s^2(u)\,\dd u }  \right\}^2 - \int_0^1 \frac{ (s^2(u)-s^2_0(u))^2 }{s^4(u)}\,\dd u
=O\left(\frac{1}{n^\lambda}\right).
\end{equation}
Hence, for all $n$ large enough,
\begin{align*}
\sum_{i=1}^n \exz[Z_{i,n,s}(Y_{i,n})] &\leq 
-\frac{ \widetilde{c}_0 n }{2} \int_0^1 \frac{ (s^2(u)-s^2_0(u))^2 }{s^4(u)}\,\dd u+O(n^{1-\lambda})\\
&\leq - \frac{\widetilde{c}_0\kappa^2}{\mathcal{K}^4}2^j\varepsilon_n^2 n+\widetilde{C}_0n^{1-\lambda}.
\end{align*}
Here the first inequality follows from \eqref{two*} and the last inequality from the assumption $s\in A_{j,\varepsilon_n}$. The proof is completed.
\end{proof}

The following lemma is an analogue of Lemma A.3 in \cite{gugu16}. The proof also shares similar general arguments, but differs in details.

\begin{lemma}
\label{lemma2}

Let Assumption~\ref{standing} be satisfied and assume also $b_0=0$. For a fixed and small enough constant $c_1>0$, for any sequence ${\varepsilon}_n\asymp n^{-\beta}\log^\gamma n$ of strictly positive numbers with $\gamma>0$, and any sequence $m_n \asymp n^{1-\alpha}$ (equivalently, $N_n\asymp n^{\alpha}$) as in Theorem \ref{mainthm}, there exist a constant
$c_0>0$ (depending on $\kappa$, $\mathcal{K}$ and $L$) 
and a universal constant $C>0$
for which the inequality 
\rm
\begin{equation}
\label{one*}
\ppz \left( \sup_{s\in U_{s_0,\varepsilon_n}^c} \prod_{i=1}^n \frac{p_{i,n,s}(Y_{i,n})}{ p_{i,n,0}(Y_{i,n}) } \geq \exp\left( -c_1 n\varepsilon_n^2 \right)\right)
\leq C M_{\varepsilon_n} \exp\left( - c_0 n\varepsilon_n^2 \right)
\end{equation}
holds for all $n$ large enough. 
\end{lemma}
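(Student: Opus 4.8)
The plan is to run the standard peeling (slicing) argument for likelihood‑ratio upper bounds, exploiting the explicit Gaussian form of $R_n$. Since $s,s_0$ take values in $[\kappa,\mathcal K]$ one has $\|s-s_0\|_2^2\le\mathcal K^2\le 2^{M_{\varepsilon_n}}\varepsilon_n^2$, so $U_{s_0,\varepsilon_n}^c\cap\mathcal S_n\subseteq\bigcup_{j=0}^{M_{\varepsilon_n}}A_{j,\varepsilon_n}$, and by a union bound it suffices to prove that for every $j$ and all large $n$,
\[
\ppz\Big(\sup_{s\in A_{j,\varepsilon_n}}R_n(s)\ge e^{-c_1 n\varepsilon_n^2}\Big)\lesssim e^{-c_0\,2^j\varepsilon_n^2 n};
\]
summation of this geometric‑type series over $j\in\{0,\dots,M_{\varepsilon_n}\}$ then produces the factor $M_{\varepsilon_n}$ in the asserted bound.

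For a \emph{single} $s\in A_{j,\varepsilon_n}$ I would use the $\tfrac12$‑moment of $R_n$ together with Markov's inequality. Under $\mathbb P_{0,s_0}$ the increment $Y_{i,n}$ has density $p_{i,n,0}=N(0,\sigma_{i,0}^2)$ with $\sigma_{i,0}^2=\int_{t_{i-1,n}}^{t_{i,n}}s_0^2$, and $p_{i,n,s}=N(0,\sigma_{i,s}^2)$ with $\sigma_{i,s}^2=\int_{t_{i-1,n}}^{t_{i,n}}s^2$, so the Gaussian affinity gives
\[
\exz\big[\sqrt{R_n(s)}\big]=\prod_{i=1}^n\Big(\tfrac{4\sigma_{i,s}^2\sigma_{i,0}^2}{(\sigma_{i,s}^2+\sigma_{i,0}^2)^2}\Big)^{1/4}\le\exp\Big(-\tfrac{1}{16\mathcal K^4}\,n^2\sum_{i=1}^n(\sigma_{i,s}^2-\sigma_{i,0}^2)^2\Big),
\]
using $\kappa^2/n\lesssim\sigma_{i,\cdot}^2\lesssim\mathcal K^2/n$ and $1-(1-x)^{1/4}\ge x/4$. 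Since $s$ is piecewise constant on the bins while $s_0$ is H\"older, Lemma~\ref{lemma:lipschitz} yields $\sigma_{i,s}^2-\sigma_{i,0}^2=n^{-1}(s^2(z_i)-s_0^2(z_i))+O(n^{-1-\lambda})$ uniformly, whence $n^2\sum_i(\sigma_{i,s}^2-\sigma_{i,0}^2)^2=n\|s^2-s_0^2\|_2^2+O(n^{1-\lambda})$; combined with $\|s^2-s_0^2\|_2^2\ge 4\kappa^2\|s-s_0\|_2^2\ge 4\kappa^2 2^j\varepsilon_n^2$ and the fact that $\varepsilon_n\asymp n^{-\beta}\log^\gamma n$ with $\beta=\lambda/4$ forces $n^{1-\lambda}\ll 2^j n\varepsilon_n^2$ uniformly in $j\ge0$, this gives $\exz[\sqrt{R_n(s)}]\le e^{-c_2\,2^j\varepsilon_n^2 n}$ for $n$ large, uniformly over $s\in A_{j,\varepsilon_n}$. (This is the Hellinger/affinity analogue of the Kullback--Leibler bound in Lemma~\ref{lemma3}, and could also be deduced from it.) Markov's inequality then yields, for any fixed multiple of the threshold and $c_1$ chosen small enough, $\ppz(R_n(s)\ge e^{-2c_1 n\varepsilon_n^2})\le e^{c_1 n\varepsilon_n^2}\exz[\sqrt{R_n(s)}]\le e^{-c_3\,2^j\varepsilon_n^2 n}$.

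To pass from a single $s$ to the supremum over the infinite set $A_{j,\varepsilon_n}$ I would use that $R_n(s)$ depends on $s$ only through the coefficient vector $\theta=(\theta_1,\dots,\theta_{N_n})\in[\kappa^2,\mathcal K^2]^{N_n}$. On the event $G_n=\{\sum_{i=1}^n Y_{i,n}^2\le\mathcal K^2+1\}$ one has $\partial_{\theta_k}\log R_n=\tfrac1{2\theta_k}\sum_{i\in B_k}\big(\tfrac{nY_{i,n}^2}{\theta_k}-1\big)$ and hence $\sum_k|\partial_{\theta_k}\log R_n|\le\tfrac{n}{2\kappa^4}\sum_i Y_{i,n}^2+\tfrac{n}{2\kappa^2}\le C_{\kappa,\mathcal K}\,n$, so $\log R_n$ is $C_{\kappa,\mathcal K}n$‑Lipschitz in the $\|\cdot\|_\infty$‑metric on $\theta$‑space. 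Covering $[\kappa^2,\mathcal K^2]^{N_n}$ by $\|\cdot\|_\infty$‑balls of radius $\eta_n\asymp c_1\varepsilon_n^2$ needs at most $\exp\{O(N_n\log(1/\varepsilon_n))\}$ balls, on each of which $\log R_n$ varies by at most a fixed fraction of $c_1 n\varepsilon_n^2$; applying the single‑$s$ estimate at the centres (which for large $n$ lie in $A_{j-1,\varepsilon_n}\cup A_{j,\varepsilon_n}\cup A_{j+1,\varepsilon_n}$, so the bound holds with $2^j$ replaced by $2^{j-1}$) and a union bound gives
\[
\ppz\Big(\sup_{s\in A_{j,\varepsilon_n}}R_n(s)\ge e^{-c_1 n\varepsilon_n^2},\,G_n\Big)\le\exp\big\{O(N_n\log(1/\varepsilon_n))-c_3\,2^{j-1}\varepsilon_n^2 n\big\},
\]
while $\ppz(G_n^c)\le e^{-cn}\le e^{-c_0 n\varepsilon_n^2}$ by Bernstein's inequality for the sub‑exponential sum $\sum_i Y_{i,n}^2$, so that term is absorbed. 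The crux — and the origin of the restrictions on $\alpha$ (and, in the boundary case $\alpha=1-\lambda/2$, on $\gamma$) — is the requirement $N_n\log(1/\varepsilon_n)\asymp n^\alpha\log n\ll 2^j\varepsilon_n^2 n\gtrsim n^{1-2\beta}\log^{2\gamma}n$, i.e.\ the balance of the metric entropy of the sieve against the Hellinger separation, which is precisely the content of Remark~\ref{rem:optimal} and holds for the choices of Theorem~\ref{mainthm}; then the last display is $\le e^{-c_0 2^j\varepsilon_n^2 n}$ and summation over $j$ finishes the proof. I expect the main obstacle to be making this covering/Lipschitz step quantitatively tight: one must choose the good event $G_n$ so that its complement has probability super‑polynomially smaller than $e^{-n\varepsilon_n^2}$ while keeping the Lipschitz constant of $\log R_n$ only of order $n$, and verify that the resulting entropy term stays below the Hellinger gain for every shell index $j$.
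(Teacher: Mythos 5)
Your proposal is correct but takes a genuinely different route from the paper's own proof, and a comparison is worth recording.

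Both arguments open with the same peeling step over the shells $A_{j,\varepsilon_n}$, $j=0,\dots,M_{\varepsilon_n}$, which is what produces the factor $M_{\varepsilon_n}$. The divergence is in how the supremum of $R_n(s)$ over each shell is controlled. The paper first \emph{centres} the log-likelihood ratio: it uses Lemma~\ref{lemma3} to bound the deterministic part $\sum_i\exz[Z_{i,n,s}(Y_{i,n})]$ (a Kullback--Leibler-type estimate), reducing the probability \eqref{pbjn} to a tail estimate for the zero-mean, $s$-indexed process $\tfrac1n\sum_i\mathcal W_i f_s(z_i)$; that tail is then controlled by Corollary~8.8 of van de Geer, whose conditions (8.23)--(8.29) are verified via the bracketing/sup-norm entropy of the sieve. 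You instead avoid centring and van de Geer's machinery altogether: you bound the Hellinger affinity $\exz[\sqrt{R_n(s)}]$ \emph{pointwise} using the explicit Gaussian product form and the elementary inequality $(1-x)^{1/4}\le 1-x/4$, obtain from it via Markov a pointwise exponential tail, and then pass to the supremum by a direct $\|\cdot\|_\infty$-covering of $[\kappa^2,\mathcal K^2]^{N_n}$ together with the Lipschitz bound $\sum_k|\partial_{\theta_k}\log R_n|\lesssim n$ on the good event $G_n=\{\sum_iY_{i,n}^2\le\mathcal K^2+1\}$, whose complement has probability $e^{-cn}$ and so is absorbable. The trade-off: the paper's route leverages the same empirical-process tool already deployed for Lemma~\ref{lemma0} (and so the two lemmas share infrastructure), whereas your route is essentially self-contained and more elementary, at the cost of introducing the auxiliary good event and a gradient bound. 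Both proofs reduce, in the end, to the same entropy--separation balance $N_n\log(1/\varepsilon_n)\ll n\varepsilon_n^2$, which is transparent in your parametrisation by $\theta\in[\kappa^2,\mathcal K^2]^{N_n}$. Two small points worth tightening in a write-up: (i) when applying the single-$s$ estimate at the covering centres you should state explicitly that with $\eta_n\asymp c_1\varepsilon_n^2\ll\varepsilon_n$ the centre of a ball meeting $A_{j,\varepsilon_n}$ has $\|s'-s_0\|_2\ge 2^{(j-1)/2}\varepsilon_n$ for all $n$ large (including $j=0$), so the exponent survives with $2^j$ replaced by $2^{j-1}$; (ii) the Riemann-sum error in $n^2\sum_i(\sigma_{i,s}^2-\sigma_{i,0}^2)^2=n\|s^2-s_0^2\|_2^2+O(n^{1-\lambda})$ must be dominated by $2^jn\varepsilon_n^2$ uniformly in $j\ge 0$, which requires $n^{-\lambda}\ll\varepsilon_n^2$; since $\beta=\lambda/4<\lambda/2$ this holds, but it is an extra place where the constraint \eqref{eq:extra} enters and deserves to be flagged.
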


\begin{proof}
As in the proof of Lemma A.3 in \cite{gugu16}, one has
\begin{multline}
\label{zero*}
\ppz \left( \sup_{s\in U_{s_0,\varepsilon_n}^c} \prod_{i=1}^n \frac{p_{i,n,s}(Y_{i,n})}{ p_{i,n,0}(Y_{i,n}) } \geq \exp\left( -c_1 n\varepsilon_n^2 \right)\right)\\
=\sum_{j=0}^{M_{\varepsilon_n}} \ppz\left( \sup_{s\in A_{j,\varepsilon_n}} \prod_{i=1}^n \frac{p_{i,n,s}(Y_{i,n})}{ p_{i,n,0}(Y_{i,n}) } \geq \exp\left( -c_1 n\varepsilon_n^2 \right) \right).
\end{multline}
Furthermore, using Lemma~\ref{lemma3} above, by arguments identical to those in the proof of Lemma~A.3 in \cite{gugu16}, we have for all $n$ large enough
\begin{multline}
\label{pbjn}
\ppz\left( \sup_{s\in A_{j,\varepsilon_n}} \prod_{i=1}^n \frac{p_{i,n,s}(Y_{i,n})}{ p_{i,n,0}(Y_{i,n}) } \geq \exp\left( -c_1 n\varepsilon_n^2 \right) \right)\\
\leq\ppz\Biggl( \sup_{s\in B_{j,\varepsilon_n}}  \left| \frac{1}{n} \sum_{i=1}^n \mathcal{W}_i f_s(z_i) \right|
 \geq  \delta_n \Biggr),
\end{multline}
where
\begin{equation}
\label{deltan}
\delta_n=\overline{\delta}2^{j+1}\varepsilon_n^2= \left( \frac{\tilde{c}_0\kappa^2}{\mathcal{K}^4} - \frac{\widetilde{C}_0}{2^j\varepsilon_n^2n^{\lambda}} - \frac{c_1}{2^{j}} \right) 2^{j+1}  \varepsilon_n^2.
\end{equation}
Note that $\overline{\delta}>0$ for all $n$ large enough, by choosing $0<c_1<\tilde{c}_0\kappa^2/(2\mathcal{K}^4)$ and $n^\lambda\varepsilon_n^2\rightarrow\infty$ as $n\rightarrow\infty.$ The former is a restriction on $c_1$ alluded to in the statement of the theorem.

To bound the righthand side in \eqref{pbjn}, we will apply Corollary 8.8 from \cite{geer00}. To that end we need to verify its conditions. Under our assumptions, we have 
\begin{equation*}
\int_0^1\frac{(s_0^2(u)-s^2(u))^2}{s^4(u)}\,\dd u\leq\frac{4\mathcal{K}^2}{\kappa^4}2^{j+1}\varepsilon_n^2,
\end{equation*}
and the $O(n^{-\lambda})$ term in \eqref{two*} is less than $2^{j+1}\varepsilon_n^2$ for all large $n$. Hence, \eqref{two*} yields
\[
\frac{1}{n}\sum_{i=1}^n \left\{ \frac{ \int_{[z_i,z_{i+1)}} [s_0^2(u)-s^2(u)]\,\dd u }{ \int_{[z_i,z_{i+1)}} s^2(u)\,\dd u } \right\}^2 
\leq \left(\frac{4\mathcal{K}^2}{\kappa^4}+1\right) 2^{j+1}\varepsilon_n^2
\]
for all $n$ large enough and all $j=0,1,\ldots,M_{\varepsilon_n}$. Thus, taking
\begin{equation*}
R_n= \left\{\frac{4\mathcal{K}^2}{\kappa^4}+1\right\}^{1/2} 2^{(j+1)/2}\varepsilon_n
\end{equation*}
yields $\sup_{s\in B_{j,\varepsilon_n}} \|f_s\|_{Q_n}\leq R_n.$ This verifies the unnumbered condition in Corollary (8.8) in \cite{geer00}.

With constants $K_1,$ $C,$ $C_0$ and $C_1$ chosen as in the proof of Lemma \ref{lemma0},  $K_2=2\mathcal{K}^2/\kappa^2$ and $K=4K_1K_2,$ it is easy to see that conditions (8.23)--(8.8.27) and (8.29) in \cite{geer00} will be verified.

Finally, we have to check (8.28) in \cite{geer00}, that amounts to checking the three inequalities $\delta_n\leq C_12R_n^2\sigma_0^2/K,$ $\delta_n\leq 8\sqrt{2}R_n\sigma_0$, and
\begin{equation}
\label{8.28bis}
\sqrt{n}\delta_n \geq C_0 \left( \int_{\delta_n/2^6}^{\sqrt{2}R_n\sigma_0} H_B^{1/2}\left( \frac{u}{\sqrt{2}\sigma_0},\mathcal{F}_{s_0,j,\varepsilon_n},Q_n\right) \,\dd u \vee \sqrt{2} R_n \sigma_0 \right),
\end{equation}
where $\mathcal{F}_{s_0,j,\varepsilon_n}=\{f_s:s\in B_{j,\varepsilon_n}\}$, and $\sigma_0^2=2 \ee_{0,s_0}\left[ \mathcal{W}_i^2 e^{|\mathcal{W}_i|/3} \right]$.
 
Both these inequalities follow by taking $\overline{\delta}>0$ in \eqref{deltan} sufficiently small. This can be accomplished by taking $\widetilde{c}_0$ and hence $c_1$ small enough, still obeying the inequality below that equation. Note that for such a small chosen $\widetilde{c}_0$ the upper bound of Lemma~\ref{lemma3}, needed at the beginning of this proof, is still valid. 
\rm

Now we move to verifying \eqref{8.28bis}, the third inequality. This amounts to separately checking that for all $n$ large enough and all $j=0,1,\ldots,M_{\varepsilon_n},$ the inequalities $n\delta_n^2\geq C_0^2 2 R_n^2\sigma_0^2$ and
\begin{equation}
\label{8.28bisbis}
n\delta_n^2\geq C_0^2 \left( \int_{\delta_n/2^6}^{\sqrt{2}R_n\sigma_0} H_B^{1/2}\left( \frac{u}{\sqrt{2}\sigma_0},\mathcal{F}_{s_0,j,\varepsilon_n},Q_n\right) \,\dd u \right)^2
\end{equation}
hold. The first of these two inequalities is again straightforward, because $n\varepsilon_n^2\rightarrow\infty,$ so we move to the second one. As a preliminary step, we will show how the bracketing numbers of the set $\mathcal{F}_{s_0,j,\varepsilon_n}$ for the $L_2(Q_n)$-norm can be bounded by the bracketing numbers of the set $B_{j,\varepsilon_n}.$ Suppose we have a bracket $[\ell,u]$ for functions $s.$ Then it follows directly from the definition that $[f_{u},f_{\ell}]$ is a bracket for functions $f_s.$ Now we will compare the norms, in order to compare sizes of the brackets: let $s_1,s_2$ be two dispersion coefficients. Then, using \eqref{approx*}, the $c_2$-inequality and applying Lemma~\ref{lemma:lipschitz}, we obtain
\begin{align*}
\| f_{s_1} - f_{s_2} \|_{Q_n}^2 & = \frac{1}{n}\sum_{i=1}^n ( f_{s_1}(z_i) - f_{s_2}(z_i) )^2\\
&=\frac{1}{n}\sum_{i=1}^n \left\{ \frac{ s_0^2(z_i) - s_1^2(z_i) + O(n^{-\lambda}) }{s_1^2(z_i)} - \frac{ s_0^2(z_i) - s_2^2(z_i)+O(n^{-\lambda}) }{s_2^2(z_i)} \right\}^2\\
&\leq 2 \frac{1}{n}\sum_{i=1}^n \left\{ \frac{ s_0^2(z_i) - s_1^2(z_i) }{s_1^2(z_i)} - \frac{ s_0^2(z_i) - s_2^2(z_i) }{s_2^2(z_i)} \right\}^2+O\left(n^{-2\lambda}\right)\\
&=2\int_0^1\left\{ \frac{ s_0^2(u) - s_1^2(u) }{s_1^2(u)} - \frac{ s_0^2(u) - s_2^2(u) }{s_2^2(u)} \right\}^2\,\dd u+O\left(n^{-\lambda}\right)\\
&=2\int_0^1 \frac{ s_0^4(u)( s_1^2(u) - s_2^2(u) )^2 }{s_1^4(u)s_2^4(u)}\,\dd u+O\left(n^{-\lambda}\right)\\
&\leq 2\frac{\mathcal{K}^4}{\kappa^8}\|s_1^2-s_2^2\|_2^2+O\left(n^{-\lambda}\right)\\
& \leq 8\frac{\mathcal{K}^6}{\kappa^8}\|s_1-s_2\|_2^2+O\left(n^{-\lambda}\right).
\end{align*}

Taking square roots, we get from the elementary inequality $\sqrt{a+b}\leq\sqrt{a}+\sqrt{b}$ with $a,b\geq 0$ that
\begin{equation*}
\| f_{s_1} - f_{s_2} \|_{Q_n}\leq \sqrt{8} \frac{\mathcal{K}^3}{\kappa^4}\|s_1-s_2\|_2+O\left(n^{-\lambda/2}\right).
\end{equation*}
Suppose $u\geq \delta_n/2^6,$ as in the entropy integral \eqref{8.28bis}. Then for all $n$ large enough, the remainder term in the above display satisfies $O(n^{-\lambda/2})\leq u/2$. For this we need the condition $O(n^{-\lambda/2})\leq \eps_n^2$, i.e.
\begin{equation}\label{eq:extra}
4\beta\leq\lambda
\end{equation}
to be satisfied, which holds under the stipulated assumptions on $\eps_n$.
Furthermore, if also
\begin{equation*}
\|s_1-s_2\|_2 \leq \frac{\kappa^4}{\sqrt{32}\mathcal{K}^3}u,
\end{equation*}
we get by the triangle inequality that $\| f_{s_1} - f_{s_2} \|_{Q_n}\leq u$. It follows that\begin{equation*}
H_B\left( \frac{u}{\sqrt{2}\sigma_0},\mathcal{F}_{s_0,j,\varepsilon_n},Q_n\right) \leq H_B\left( \frac{\kappa^4 u}{8\sqrt{2}\mathcal{K}^3\sigma_0},B_{j,\varepsilon_n},Q_n\right).
\end{equation*}
The entropy on the righthand side, using $H_B\left( \delta,\mathcal{S}_n,Q_n\right) \leq H_{\infty}\left(\delta/2,\mathcal{S}_n\right)$, see Lemma 2.1 in \cite{geer00}, can be further bounded by
\begin{equation}
\label{three*}
H_B\left( \frac{\kappa^2 u}{8\sqrt{2}\mathcal{K}^3\sigma_0},\mathcal{S}_n,Q_n\right) \leq H_{\infty}\left( \frac{\kappa^2 u}{16\sqrt{2}\mathcal{K}^3\sigma_0},\mathcal{S}_n\right).
\end{equation}
\rm
Now recall that for all $s\in\mathcal{S}_n,$ we have $\kappa \leq s \leq \mathcal{K}.$ This and the fact that there are $N_n$ bins in total imply that the minimal number of balls with radii $v$ to cover the set $\mathcal{S}_n$ with respect to the supremum norm is bounded by
\begin{equation*}
\left( 1 + \textrm{const} \frac{1}{v} \right)^{N_n},
\end{equation*}
for some constant $\textrm{const}>0$ independent of $n.$ Hence, the entropy on the righthand side of \eqref{three*} is bounded by
\begin{equation*}
N_n\log\left( 1 + \textrm{const} \frac{1}{u} \right).
\end{equation*}
Now this bound shows that the required condition \eqref{8.28bis} will follow, if we show the inequality
\begin{equation*}
\sqrt{n}\delta_n \gg  \sqrt{N_n} \int_0^{R_n} \log^{1/2}\left( 1 + \textrm{const} \frac{1}{u} \right)\,\dd u.
\end{equation*}

The integral in this display (without loss of generality we take the const equal to $1$) is of order $R_n\log^{1/2}\left(1+{1}/{R_n}\right)$, which can be seen as follows.
\[
\int_0^{R_n} \log^{1/2}\left( 1 + \frac{1}{u} \right)\,\dd u=R_n\sqrt{\log\left(1+\frac{1}{R_n}\right)}+\frac{1}{2}\int_0^{R_n}\frac{1}{\sqrt{\log(1+1/u)}}\frac{1}{u+1}\,\dd u,
\]
where the latter integral is upper bounded by $\log(1+R_n)/\sqrt{\log(1+{1}/{R_n})}$. This upper bound is for small $R_n$ of lower order than $R_n\sqrt{\log(1+{1}/{R_n})}$. Hence it is sufficient to show that 
\[
\sqrt{n}\delta_n \gg  \sqrt{N_n} R_n\log^{1/2}\left(1+\frac{1}{R_n}\right).
\]
However, the latter relationship follows from 
\begin{equation}\label{eq:epsN}
n\varepsilon_n^2 \gg N_n \log(\varepsilon_n^{-1}), 
\end{equation}
which is true under our choice of $N_n$ and $\varepsilon_n$ as given in Theorem~\ref{mainthm}.

\rm
Since we verified all the conditions in Corollary 8.8 in \cite{geer00}, as in the proof of Lemma~A.3 in \cite{gugu16} we can apply it to the righthand side of \eqref{pbjn} to obtain 
\begin{equation*}
\ppz\left( \sup_{s\in A_{j,\varepsilon_n}} \prod_{i=1}^n \frac{p_{i,n,s}(Y_{i,n})}{ p_{i,n,0}(Y_{i,n}) } \geq \exp\left( -c_1 n\varepsilon_n^2 \right) \right) \leq \exp\left( - c_0 n\varepsilon_n^2 \right),
\end{equation*}
where $c_0>0$ can be expressed in terms of previous constants, cf.\ \cite{geer00} and \cite{gugu16}.
This inequality and \eqref{zero*} yield the statement of the lemma.
\end{proof}

\begin{rem}\label{rem:optimal}
The optimal choice for $\beta$ in $\eps\asymp n^{-\beta}\log^\gamma n$ in the statement of the lemma results from coupling the conditions \eqref{eq:extra} and \eqref{eq:epsN} with
our earlier restrictions {\color{red} \eqref{eq:m}}, \eqref{bound*} and \eqref{eq:delta} on $m_n$, $N_n$, $\varepsilon_n$ and $\widetilde{\varepsilon}_n$. 

Indeed, putting  $N_n\asymp n^\alpha$ gives from the coupling of these five conditions, 
in the above presented order, 
$4\beta\leq \lambda$, $1-2\beta\geq \alpha$ and $\gamma>1$, $\beta\leq\alpha\lambda$, $\beta\leq \alpha\lambda/2$ and $\beta\leq (1-\alpha)/2$. The third and the last conditions can be omitted, which leads to the conditions $4\beta\leq \lambda$, $\beta\leq (1-\alpha)/2$ and  $\beta\leq \alpha\lambda/2$ on $\alpha$ and $\beta$.

It turns out that the maximal possible $\beta$ is $\beta=\frac{\lambda}{4}$, this attains the first condition and also satisfies the other two, for $\alpha \in [\frac{1}{2},1-\frac{\lambda}{2}]$.
\end{rem}

\section*{Acknowledgments}
The research leading to the results in this paper has received funding from the European Research Council under ERC Grant Agreement 320637. The authors would like to thank the referee for constructive comments.

\bibliographystyle{plainnat}

\end{document}